\documentclass[a4paper]{article}
\usepackage[utf8x]{inputenc}
\usepackage{yfonts}
\usepackage{amsmath}
\usepackage{amsthm}
\usepackage{amssymb}
\usepackage{amsfonts}
\usepackage{paralist}
\usepackage{hyperref}
\usepackage[arrow, matrix, curve]{xy}
\usepackage{youngtab}
\usepackage{young}
\usepackage{graphicx}
\usepackage{wrapfig}
\usepackage{tikz}
\usepackage{mathtools}

\usetikzlibrary{calc}

\newtheorem{thm}[subsection]{Theorem}
\newtheorem{facts}[subsection]{Facts}

\newtheorem{lem}[subsection]{Lemma}

\newtheorem{cor}[subsection]{Corollary}
\newtheorem{defi}[subsection]{Definition}
\newtheorem{ex}[subsection]{Example}
\newtheorem{rem}[subsection]{Remark}

\newtheorem{prop}[subsection]{Proposition}

\newtheorem{notation}[subsection]{Notation}

\newtheorem{obs}[subsection]{Observation}

\newcommand{\A}{\mathbb{A}}

\newcommand{\F}{\mathbb{F}}

\newcommand{\C}{\mathbb{C}}
\newcommand{\E}{\mathbb{E}}

\newcommand{\R}{\mathbb{R}}
\renewcommand{\P}{\mathcal{P}}
\newcommand{\Q}{\mathbb{Q}}
\newcommand{\Z}{\mathbb{Z}}

\renewcommand{\cal}{\mathcal}

\newcommand{\N}{\mathbb{N}}
\newcommand{\K}{\mathbb{K}}

\renewcommand{\O}{\mathbb{O}}

\renewcommand{\-}{\text{-}}

\renewcommand{\l}{\lambda}

\newcommand{\q}{\textbf{q}}

\newcommand{\rar}{\rightarrow}
\newcommand{\lar}{\leftarrow}
\newcommand{\inj}{\hookrightarrow}
\newcommand{\linj}{\hookleftarrow}
\newcommand{\surj}{\twoheadrightarrow}

\newcommand{\ol}{\overline}

\newcommand{\wt}{\widetilde}

\begin{document}
\title{Modular equivariant formality}
\author{Jan Weidner}
\date{}
\maketitle
\begin{abstract}
Let $X$ be a partial flag variety, equipped with the Borel action by multiplication.
We give a criterion for the equivariant derived category with modular coefficients to be formal. 
\end{abstract}

\section*{Introduction}
Let $X$ be a space equipped with the action of a group $G$ and $R$ be a nice commutative ring. Then 
one can form the equivariant derived category \cite{BernsteinLunts}:
$$D^b_G(X,R)$$
It is the correct replacement for the naive derived category of the quotient $X/G$. 
In fact it exists even if $X/G$ does not or is badly behaved.

Our goal is to provide a description of this category. More precisely we want to construct equivalences 
of categories 
\begin{equation}\label{equivFormality}
D^b_G(X,R)\cong per\-Ext^\bullet(IC)
\end{equation}
where the right hand side is the perfect derived category of the $dg$-algebra $(Ext_{D^b_G(X,R)}^\bullet(IC,IC),d=0)$
and $IC$ denotes the direct sum of all intersection cohomology complexes of all $G$-orbit closures. Here we assume for simplicity that there are 
only finitely many orbits, each of which supports only finitely many irreducible local systems. 
If there is such an equivalence as in \ref{equivFormality}, we will say that the equivariant derived category is formal.
In fact equivariant formality is already known in many situations. The following table gives an (very incomplete and biased) 
overview:
$$
\begin{tabular}{l | l | l | l}
 $G$ & $X$ & $R$ & \text{ Reference} \\
\hline
 \text{connected Lie group} & \text{point} & $\R$ & \cite{BernsteinLunts} \\
\hline
  \text{Torus } & \text{toric variety } & $\R$ & \cite{LuntsToric}\\
\hline 
 \text{Borel } & \text{partial flag variety } & $\C$ & \cite{OlafThesis} \\
\hline 
  \text{complex semisimple adjoint} & \text{smooth complete symmetric} & $\C$ & \cite{SymSpaceFormal}\\ 
\hline
\end{tabular}
$$
Observe that all of these results require $R$ to be a field of characteristic zero.\footnote{See however \cite{RSW} for non-equivariant 
formality results away from characteristic zero.} The reason is, that the known techniques either 
use mixed sheaves or commutative models of cochain algebras, both of which are problematic without this assumption.

In this article, we would like to add some examples, where $R$ is of mixed or positive characteristic.
More precisely we are interested in the following situation: 
Let $B\subset P \subset G$ a complex connected reductive group, along with a Borel and a parabolic subgroup. 
Let $X=G/P$ be the corresponding partial flag variety, equipped with the $B$ action by multiplication on the left.
Then our goal is to prove the following theorem:
\begin{thm}
Suppose that all stalks and costalks of all $B$-constructible $\mathbb Z_l$-intersection 
cohomology complexes $IC_w^{\Z_l}$ on $X=G/P$ are torsion free. 
If $l>wr(X,B)$\footnote{This is a mild and explicit condition on $l$ to be explained later \ref{defBGSequiv}.}
then there exists an equivalence of categories:
$$D_B^b(X,\F_l)\cong per\-Ext^\bullet(IC)$$
\end{thm}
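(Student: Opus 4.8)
The plan is to pass to the maximal torus, reconstruct the equivariant category from the $\mathrm{Ext}$-algebra of the intersection cohomology complexes by a general dg-principle, and then establish formality of that $\mathrm{Ext}$-algebra by reduction modulo $l$ from the known characteristic-zero statement. \emph{Step 1 (reduction to the torus).} Write $B=T\ltimes U$ with $U$ the unipotent radical. The variety $B/T\cong U$ is an iterated extension of affine lines, hence contractible, so the forgetful functor $D_B^b(X,\F_l)\to D_T^b(X,\F_l)$ is an equivalence; it matches the two families $IC_w$ and identifies the $\mathrm{Ext}$-algebras. So from now on we work $T$-equivariantly. The $T$-orbits on $X=G/P$ are the Bruhat cells $C_w$, indexed by the minimal-length representatives of $W/W_P$, each a $T$-stable affine space; thus $(X,\{C_w\}_w)$ is an affine paving, and recollement gives a finite filtration of $D_T^b(X,\F_l)$ with subquotients the categories $D_T^b(C_w,\F_l)$ of equivariant sheaves on affine spaces.

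\emph{Step 2 (dg-reconstruction).} From this filtration one reads off that $\{IC_w\}_w$ generates $D_T^b(X,\F_l)$ as a thick triangulated category (the cells being simply connected, only the trivial local system occurs on each). Set $IC=\bigoplus_w IC_w$ and let $E=R\mathrm{Hom}_{D_T^b(X,\F_l)}(IC,IC)$, a dg-algebra whose cohomology is the graded $\mathrm{Ext}$-algebra $A=\mathrm{Ext}^\bullet(IC,IC)$, a finitely generated module over $H_T^\bullet(\mathrm{pt},\F_l)\cong\F_l[\mathfrak t]$. Since $D_T^b(X,\F_l)$ is idempotent complete and $IC$ is a compact generator, a Keller-type reconstruction theorem gives $D_T^b(X,\F_l)\cong per\-E$. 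Hence the theorem reduces to proving that $E$ is \emph{formal}, i.e. $A_\infty$-equivalent to $(A,d=0)$; equivalently, that the higher operations $m_n$ $(n\ge 3)$ on $A$ can be gauged away.

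\emph{Step 3 (integral model and descent).} Now the hypotheses enter. Torsion-freeness of every stalk and costalk of $IC_w^{\Z_l}$ ensures that both reduction mod $l$ and extension of scalars to $\Q_l$ commute with intersection cohomology, $IC_w^{\Z_l}\otimes^L\F_l\cong IC_w$ and $IC_w^{\Z_l}\otimes^L\Q_l\cong IC_w^{\Q_l}$; consequently the integral dg-algebra $\widetilde{E}=R\mathrm{Hom}_{D_T^b(X,\Z_l)}(IC^{\Z_l},IC^{\Z_l})$, which we may take termwise $\Z_l$-flat, interpolates between $E$ and its rational analogue $E_{\Q_l}$. The bound $l>wr(X,B)$ is precisely what makes the BGS-equivariant formalism behave over $\Z_l$: it forces $H^\bullet(\widetilde{E})=\mathrm{Ext}_{D_T^b(X,\Z_l)}^\bullet(IC^{\Z_l},IC^{\Z_l})$ to be $\Z_l$-torsion-free, so that $H^\bullet(\widetilde{E})\otimes\F_l=A$ and $H^\bullet(\widetilde{E})\otimes\Q_l=A_{\Q_l}$ with no $\mathrm{Tor}$-terms, and likewise makes the relevant Hochschild obstruction groups of the graded algebra $H^\bullet(\widetilde{E})$ torsion-free. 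Transport a minimal $A_\infty$-model onto $H^\bullet(\widetilde{E})$ by homotopy transfer; its structure maps $m_n$ are $\Z_l$-multilinear and base-change to $\Q_l$ to give the $A_\infty$-structure of $E_{\Q_l}$, which is formal by the characteristic-zero partial-flag result (\cite{OlafThesis}; third row of the table, valid with $\Q_l$-coefficients by the standard coefficient comparison). Inducting on the least $n\ge 3$ with $m_n\ne 0$: such an $m_n$ is a Hochschild cocycle whose class dies after $\otimes\Q_l$, hence dies already over the torsion-free obstruction group, so a change of $A_\infty$-structure removes it. Thus $\widetilde{E}$ is formal over $\Z_l$; reducing mod $l$ gives $E\simeq(A,d=0)$, and therefore $D_B^b(X,\F_l)\cong D_T^b(X,\F_l)\cong per\-A=per\-\mathrm{Ext}^\bullet(IC)$.

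\emph{Main obstacle.} The heart of the matter is Step 3: showing that the \emph{integral} equivariant $\mathrm{Ext}$-algebra, and the Hochschild groups carrying the formality obstructions, are torsion-free once $l>wr(X,B)$. This requires weaving together the affine-paving structure, the parity of $IC$-stalks on Bruhat cells, and an equivariant universal-coefficients argument; and, since the weight-theoretic proofs available in characteristic zero are not at hand, the descent of formality must be driven entirely by this torsion-freeness — together with a careful treatment of $A_\infty$-homotopy theory over the discrete valuation ring $\Z_l$ rather than over a field — which is the delicate point and presumably the content of the BGS-equivariant machinery referenced in the statement.
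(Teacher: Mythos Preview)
Your proposal follows a route genuinely different from the paper's, and it contains a real gap.

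\textbf{What the paper does.} The paper never attempts to descend formality from $\Q_l$ via $A_\infty$-obstruction theory. Instead it works over $\ol{\F}_q$ and uses Frobenius weights directly. The equivariant category is written as an inverse limit $D^b_{B,\Lambda}(X,\E)\cong\varprojlim D^b_\Lambda(\ol X_n,\E)$ over finite-dimensional approximations $\ol X_n$ of the Borel construction. At each finite level the hypotheses ($IC^\O$-parity, BGS-condition, separated weights) feed into the Koszulity theorem \ref{thmMainModKoszul2}, which produces a \emph{grading} on $End(P^\E)$ coming from Frobenius eigenvalue decomposition. This internal grading is what drives formality via Deligne's purity trick (Proposition \ref{PropDeligneFormality}), uniformly in $n$; one then passes to the limit. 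The condition $l>wr(X,B)$ is exactly what guarantees that the relevant powers of $q$ stay pairwise distinct in $\F_l$, so that the Frobenius eigenspace decomposition survives reduction mod $l$. It has nothing to do with torsion in Hochschild cohomology.

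\textbf{Where your argument breaks.} Your Step 3 hinges on the assertion that $l>wr(X,B)$ forces the Hochschild groups carrying the $A_\infty$-obstructions to be $\Z_l$-torsion-free. This is neither what the bound means nor something you have justified. Torsion-freeness of $H^\bullet(\wt E)$ itself is plausible from the parity hypothesis, but that says nothing about $HH^{n,2-n}(H^\bullet(\wt E),H^\bullet(\wt E))$; Hochschild cohomology of a torsion-free algebra over $\Z_l$ can easily have torsion, and when it does your induction step (``the class dies after $\otimes\Q_l$, hence dies over $\Z_l$'') fails. You acknowledge this in your final paragraph, but it is not a technical wrinkle --- it is the entire content of the theorem. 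Moreover, your parenthetical remark that ``weight-theoretic proofs are not at hand'' is precisely backwards: the paper's whole strategy is to manufacture a weight grading over $\F_l$ by working with varieties over $\F_q$ and using Frobenius, then to check (via the balanced-product analysis of Section 4) that the weights on the approximations $\ol X_n$ are uniformly bounded so that a single separation condition on $l$ suffices.

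In short: Steps 1 and 2 are fine, but Step 3 replaces the paper's concrete Frobenius-grading mechanism with an unproven (and likely false in this generality) torsion-freeness claim about Hochschild cohomology.
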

A typical case, where all assumptions can be checked is the Borel action on a Grassmannian:
\begin{cor}
 Suppose that $1,q,\ldots, q^{n+min(n-k,k)}$ are pairwise different elements of $\F_l$. Then there is an equivalence of categories:
$$D^b_B(Gr(k,n),\F_l)\cong per\-Ext^\bullet(IC)$$
\end{cor}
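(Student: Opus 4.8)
The plan is to obtain the Corollary as a special case of the Theorem above, applied to the type $A$ partial flag variety $X=Gr(k,n)=GL_n/P$, with $P$ the maximal parabolic fixing a $k$-plane and $B\subseteq P$ the upper-triangular Borel acting by left translation. Here the $B$-orbits are exactly the Schubert cells, indexed by partitions $\mu$ inside the $k\times(n-k)$ rectangle; each is an affine space carrying only the constant local system, so the complexes entering the statement are the intersection cohomology sheaves $IC_\mu$ of the Schubert varieties $X_\mu\subseteq Gr(k,n)$ (the $B$-orbit closures). Under this dictionary the Corollary is the instance $X=Gr(k,n)$ of the Theorem, \emph{once} we check its two hypotheses: (a) all stalks and costalks of the integral complexes $IC_\mu^{\Z_l}$ are torsion free; and (b) the stated requirement that $1,q,\dots,q^{\,n+\min(n-k,k)}$ be pairwise distinct in $\F_l$ is exactly the condition $l>wr(X,B)$ of \ref{defBGSequiv} for this $X$.

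For (a) I would use the classical structure theory of Grassmannian Schubert varieties. By Zelevinsky's theorem every $X_\mu\subseteq Gr(k,n)$ admits a small resolution $\pi_\mu\colon\wt X_\mu\to X_\mu$ with $\wt X_\mu$ smooth and projective (an iterated tower of Grassmann bundles) whose fibres over the strata are products of smaller Grassmannians, hence carry affine pavings. As smallness is merely a bound on fibre dimensions, the identification $IC_\mu^{R}\cong R(\pi_\mu)_*\ul R$ (up to the normalising shift) is valid over any Noetherian coefficient ring $R$ of finite global dimension, in particular over $\Z_l$. Thus the stalk of $IC_\mu^{\Z_l}$ at a point $x$ is, up to shift, $R\Gamma(\pi_\mu^{-1}(x),\Z_l)$, the cohomology of a space with an affine paving, which is free over $\Z_l$ in each degree. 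For the costalks: the proper pushforward of the constant sheaf from a smooth complex variety is Verdier self-dual up to shift, so $IC_\mu^{\Z_l}$ is self-dual, and each costalk is therefore the $\Z_l$-linear derived dual of the corresponding stalk, again with free cohomology since no $\mathrm{Ext}^1_{\Z_l}$-terms intervene (universal coefficients). This gives (a). (For $k=1$, $Gr(1,n)=\mathbb P^{n-1}$ and the Schubert varieties are smooth linear subspaces, so (a) is immediate.)

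For (b) I would simply unwind the definition of $wr(X,B)$ from \ref{defBGSequiv} in the case $X=Gr(k,n)$: the condition there is governed by the combinatorics of partitions inside the $k\times(n-k)$ rectangle, and a direct count yields $wr(Gr(k,n),B)=n+\min(n-k,k)$, so that $l>wr(X,B)$ becomes precisely the stated hypothesis on the powers of $q$. With (a) and (b) in place the Theorem applies verbatim and delivers the equivalence $D^b_B(Gr(k,n),\F_l)\cong per\-Ext^\bullet(IC)$. I do not expect a serious obstacle: (b) is a finite computation, and (a) is classical. The point that needs care is to invoke (a) in the right generality, namely that the $IC$-stalks and costalks of Grassmannian Schubert varieties have \emph{no} $l$-torsion for \emph{every} prime $l$ (equivalently, that their $IC$-sheaves are parity sheaves for all $l$) — a phenomenon special to (co)minuscule Schubert varieties that fails for general partial flag varieties, and exactly the reason the Theorem must carry hypothesis (a) at all.
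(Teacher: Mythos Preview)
Your overall strategy matches the paper's: verify $IC^{\Z_l}$-parity for Grassmannian Schubert varieties and then check that the weight hypothesis of the main machinery is exactly the distinctness assumption on $1,q,\dots,q^{n+\min(k,n-k)}$. Your argument for (a) via Zelevinsky's small resolutions is correct and is the standard route; this is how the companion paper \cite{janGrass} handles the Grassmannian, so nothing is lost here.

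Part (b), however, is not right as written. First, the hypothesis of the Corollary is not ``$l>wr(X,B)$'' but that $wt(X,B)$ is \emph{separated}; these are genuinely different conditions (separation depends on the multiplicative order of $q$ in $\F_l^\times$, not on the size of $l$), and the phrasing ``$l>wr(X,B)$'' in the introductory Theorem is a convenient sufficient condition once one is free to choose $p$ via Dirichlet, as in the passage to $\C$ at the end of the paper. Second, and more substantively, $wt(X,B)$ is an \emph{equivariant} invariant defined through the approximation $(\ol X_n,\wt\Lambda_n)$, not through the stratification of $X$ alone; it cannot be read off from ``the combinatorics of partitions inside the $k\times(n-k)$ rectangle''. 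The paper computes it via the product formula $wt(X,B)=wt(X)\cdot wt(pt,B)$ of \ref{thmBGSApproxRules}: here $wt(Gr(k,n))=\{1,\q,\dots,\q^{\min(k,n-k)}\}$ is the non-equivariant contribution (from \cite{janGrass}) and $wt(pt,B)=\{1,\q,\dots,\q^{n}\}$ comes from the rank-$n$ maximal torus in $GL_n$ via the split extension argument. Adding exponents gives $n+\min(k,n-k)$. Your proposed ``direct count'' on partitions would at best recover the non-equivariant piece $\min(k,n-k)$; the additional $n$ is precisely the equivariant contribution and requires the approximation machinery developed in the paper.
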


\subsection*{Outline}
Recall Schn\"urer's \cite{OlafThesis} formality result for the equivariant derived category of a partial flag variety:
$$D^b_B(G/P,\C) \cong per\-Ext^\bullet(IC)$$
His proof is based on a purity argument, carried out in the framework of mixed Hodge modules.
Substantial parts of this article are inspired by Schn\"urer's techniques. So let us first sketch a variant of his method in the case of 
$\Q_l$-coefficients.

There are standard techniques to pass between objects (varieties, sheaves, etc.) over $\C$ and 
their analogues over $\ol {\F}_q$. In order to exploit the formalism of weights, it is advantageous to work in the 
latter setting. Hence we work with the $\ol {\F}_q$-incarnations of $G,P,B,\ldots$ from now on.
After this preliminary translation, the argument can be carried out in three steps:
\begin{enumerate}
\item The starting point is the following reformulation of \cite[4.4.4]{BGS}:
\begin{thm}\label{PropPBGSkoszul2}
 Let $X=\bigsqcup_{\l\in \Lambda} X_\l$ be a cell stratified variety over $\ol \F_q$, subject to the BGS-condition\footnote{This condition is for example satisfied by all partial flag varieties.}. Then 
there exists a $\Q_l$-Koszul-algebra $A$ and an equivalence between perverse sheaves constant along cells and finitely generated $A$-modules:
$$\cal P_\Lambda(X,\Q_l)\cong mod\-A$$
\begin{proof}
\cite{janGrass}   
\end{proof}
\end{thm}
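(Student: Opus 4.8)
The statement is a mild repackaging of the main results of \cite{BGS}, and my plan is to reduce to them. First I would record that $\cal P_\Lambda(X,\Q_l)$ is a finite-length abelian category admitting a projective generator. Each cell $X_\l\cong\A^{d_\l}$ is simply connected, so it carries no nontrivial $\Q_l$-local system and the simple objects of $\cal P_\Lambda(X,\Q_l)$ are exactly the intersection cohomology sheaves $IC_\l$, one for each $\l\in\Lambda$; since the stratification is by affine spaces the inclusions $j_\l\colon X_\l\inj X$ are affine morphisms, so the standard and costandard objects $\Delta_\l=(j_\l)_!\,\Q_l[d_\l]$ and $\nabla_\l=(j_\l)_*\,\Q_l[d_\l]$ (built from the shifted constant sheaf on the cell) lie in $\cal P_\Lambda(X,\Q_l)$. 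A standard argument then exhibits $\cal P_\Lambda(X,\Q_l)$ as a highest-weight category with these as standard and costandard objects; in particular there are projective covers $P_\l\surj IC_\l$ and $P=\bigoplus_\l P_\l$ is a projective generator. Morita theory gives $\cal P_\Lambda(X,\Q_l)\cong mod\-A$ for the finite-dimensional $\Q_l$-algebra $A=\operatorname{End}(P)^{\mathrm{op}}$, and what remains is to exhibit on $A$ a grading making it Koszul.

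For the grading I would pass to the mixed setting: fix an $\F_q$-form of $X$ and of its stratification and work with the category $\cal P_\Lambda^{mix}(X)$ of mixed perverse sheaves constant along cells, equipped with the forgetful functor $v\colon\cal P_\Lambda^{mix}(X)\rar\cal P_\Lambda(X,\ol{\Q}_l)$ and the Tate twist $M\mapsto M(1)$. All of $\Delta_\l,\nabla_\l,IC_\l$ acquire canonical mixed lifts, the $\Delta_\l$ and $\nabla_\l$ being pure after the usual normalisation, and the projective covers lift to mixed projectives $\wt P_\l$ with $v(\wt P_\l)=P_\l$. The point where the hypothesis enters is that the BGS-condition forces the $IC_\l$ to be pointwise pure (the parity/purity condition on stalks and costalks); using semisimplicity of Frobenius on the relevant pure objects, $\wt A:=\operatorname{End}\!\big(\bigoplus_\l\wt P_\l\big)^{\mathrm{op}}$ then inherits a nonnegative $\Z$-grading from the weight filtration, with $\wt A_0$ semisimple, in such a way that $grmod\-\wt A\cong\cal P_\Lambda^{mix}(X)$ and $mod\-\wt A\cong\cal P_\Lambda(X,\ol{\Q}_l)$ via $v$; so $\wt A$ is just the $\ol{\Q}_l$-form of $A$, now carrying a grading.

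The heart of the matter is the implication \emph{pointwise purity $\Rightarrow$ Koszulity}: one shows that pointwise purity of the $IC_\l$ forces $\operatorname{Ext}^n(IC_\l,IC_\mu)$ to be pure of weight $n$ for every $n$, and this coincidence of cohomological degree with Frobenius weight is precisely the assertion that the graded algebra $\wt A$ is Koszul. This is exactly \cite[4.4.4]{BGS}; the engine is Deligne's Weil~II together with Gabber's purity theorem, applied to the hypercohomology spectral sequences that compute these $\operatorname{Ext}$-groups from the (co)stalks of the $IC_\l$. I expect this to be the main obstacle: the rest is bookkeeping, but here one genuinely needs the full weight formalism and a careful analysis of the spectral sequences, and one must also verify that the chosen normalisations really do make $\Delta_\l$ and $\nabla_\l$ pure of the expected weights — this is where the precise form of the BGS-condition does its work.

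Finally I would descend from $\ol{\Q}_l$ to $\Q_l$: the quiver, relations and grading of $\wt A$ are defined over $\Q$, so it suffices to regard $A:=\wt A$ as a $\Q_l$-algebra and forget the grading, which yields $\cal P_\Lambda(X,\Q_l)\cong mod\-A$ with $A$ Koszul, as claimed. In effect this is the route of \cite{janGrass}, which carries out the translation between the geometry of cell stratified varieties satisfying the BGS-condition and module categories over Koszul algebras, matching the hypotheses against those of \cite[4.4.4]{BGS}.
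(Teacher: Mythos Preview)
Your proposal is correct and matches the paper's own treatment: the paper explicitly introduces this theorem as ``a reformulation of \cite[4.4.4]{BGS}'' and for proof simply cites \cite{janGrass}, which carries out exactly the translation you sketch (projective generator, Morita, mixed lift via an $\F_q$-form, pointwise purity from the BGS-condition giving Koszulity by weight considerations). Your outline is essentially what lies behind that citation, so there is nothing to add.
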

Now there is a well known paradigm that Koszulity and formality are closely related. 

More precisely let $A$ be a noetherian Koszul algebra and $L:=A_0$ be the direct sum of irreducible modules. 
Let $L^\bullet$ be a projective resolution and $End^\bullet(L^\bullet)$ be the endomorphism complex.
Then Morita theory gives us an equivalence (if $A$ has finite cohomological dimension):
$$D^b(mod\-A) = per \-A \cong per\-End^\bullet(L^\bullet)$$
Now Koszulity comes into play. We choose $L^\bullet$ to be a complex of graded projectives,
which induces a second grading on $End^\bullet(L^\bullet)$. This allows to apply Deligne's argument 
and find a roof of multiplicative quasi-isomorphisms connecting $End^\bullet(L^\bullet)$ and its cohomology
$Ext^\bullet(L)$.
Putting everything together we obtain:
\begin{align*}
 D^b_\Lambda(X,\Q_l)&=D^b(\cal P_\Lambda(X,\Q_l)) \\
 &=D^b(mod\-A) \\
 & \cong per \-End^\bullet(L^\bullet) \\
 & \cong per \-Ext^\bullet(L) \\
 &=per \-Ext^\bullet(IC)
\end{align*}
Here we also used, that the realization functor $D^b(\cal P_\Lambda(X,\Q_l))\rar  D^b_\Lambda(X,\Q_l)$ is an equivalence \cite[2.3.4]{RSW}.
\item Now by \cite[Prop 95]{OlafThesis} one can write the equivariant derived category as a projective limit of non-equivariant derived categories:
$$D^b_{G}(X,\Q_l)=\varprojlim D^b_\Lambda(\ol X_n,\Q_l) $$
Here the $\ol X_n$ are approximations of the quotient $X/G$, which does usually not exist is the category of varieties.
A way to construct suitable $\ol X_n$ is to take $\ol X_n:=X_n/G:=(X\times E_n)/G$, where $\varinjlim E_n=EG$.
For example if $X=pt$ and $G=\mathbb G_m$, then we could choose $\ol X_n=\mathbb P^n$.
Anyway we need to find approximations $X_n$, such that each $\ol X_n$ satisfies the assumptions of \cite[4.4.4]{BGS}. 
\item Finally abstract arguments \cite[Thm 44, Prop 86]{OlafThesis} allow to deduce formality in the limit:
$$D^b_{G}(X,\Q_l)=\varprojlim D^b_\Lambda(\ol X_n,\Q_l) =\varprojlim per \- Ext^\bullet(IC)  = per \- Ext^\bullet(IC) $$
where the first $Ext$-algebra is in $D^b_\Lambda(\ol X_n,\Q_l)$ and the second in $D^b_{G}(X,\Q_l)$.

\end{enumerate}
Let us analyze what problems occur, if we try to replace $\Q_l$ be $\Z_l$.
The main difficulty is of course that the description of perverse sheaves by a Koszul ring
\ref{PropPBGSkoszul2} is a priori not available. 
Hence we need a substitute. In order to formulate it, we recall some notation. For $X_0$ a 
variety satisfying the BGS condition, we denote by $wt(X)$ the set of Frobenius eigenvalues (up to a technical refining) on 
$End(P^{\Q_l})$, where $P$ is a minimal projective generator of $\P_\Lambda(X,\Q_l)$ equipped with a suitable lift to $X_0$.
The set $wt(X)$ consists of powers of $q$ and we say that it is separated if its cardinality stays the same when reducing modulo $l$.
 \begin{thm}\label{thmMainModKoszul2} 
 Let $X_0$ be a cell stratified variety. Assume that all $IC^{\Z_l}$-sheaves are parity and that
the BGS-condition  holds.
Let $P^{\Z_l}=\bigoplus P_\l$ be a minimal projective generator of $\cal P_\Lambda(X,\Z_l)$. If $wt(X)$
is separated, then
$$A:=End(P^{\Z_l})$$
admits a $\Z_l$-Koszul grading.\footnote{This is a version of Koszulity ``relative'' over $\Z_l$, see \cite{janGrass}. For example 
the polynomial ring $\Z_l[x_1,\ldots x_n]$ is $\Z_l$-Koszul and the exterior algebra is its dual.
}
\begin{proof}
\cite{janGrass}   
\end{proof}
\end{thm}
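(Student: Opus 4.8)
\medskip
\noindent\textbf{Proof strategy.}\ The plan is to produce the grading on $A=\mathrm{End}(P^{\Z_l})$ out of a Frobenius action, to let separatedness force this grading to survive integrally, and to deduce the Koszul property by reduction to the characteristic-zero statement \ref{PropPBGSkoszul2} via the parity hypothesis. First I would fix a Weil lift $P_0^{\Z_l}$ over $X_0$ of the minimal projective generator; using that the cells are affine spaces (so the only local systems are constant and each $IC_\l^{\Z_l}$ has endomorphism ring $\Z_l$) together with torsion-freeness of all stalks and costalks, one checks exactly as over a field that $P^{\Z_l}=\bigoplus_\l P_\l$ exists, that $A$ is a free $\Z_l$-module of finite rank, and that geometric Frobenius induces a $\Z_l$-linear algebra automorphism $F$ of $A$. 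After inverting $l$ this recovers $\mathrm{End}(P^{\Q_l})$, the $\Q_l$-Koszul algebra of \ref{PropPBGSkoszul2}, on which $F$ acts semisimply with eigenvalue a power of $q$ on each Koszul layer (the layers are Tate, being assembled from the cohomology of affine spaces); the set of these powers is, by definition, $wt(X)$.

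\medskip
Next I would build the grading by splitting the lattice $A$ along $F$. By Cayley--Hamilton the characteristic polynomial of $F$ on $A$ is $p(T)=\prod_{\nu\in wt(X)}(T-\nu)^{m_\nu}$, with $m_\nu$ the rank of the $\nu$-eigenspace over $\Q_l$. Separatedness says precisely that the $\nu\in wt(X)$ remain pairwise distinct modulo $l$, so these factors become pairwise coprime in $\F_l[T]$; consequently $\Z_l[T]/(p)$ is a finite flat $\Z_l$-algebra whose reduction modulo $l$ is, by the Chinese remainder theorem, a product of local rings, and lifting idempotents over the complete local ring $\Z_l$ yields a decomposition $A=\bigoplus_{\nu}A_\nu$ with $A_\nu=\ker(F-\nu)^{m_\nu}$. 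Since $F$ is semisimple after inverting $l$ and each $A_\nu$ is torsion-free, in fact $F|_{A_\nu}=\nu\cdot\mathrm{id}$ on the nose, so multiplicativity of $F$ gives $A_\nu\cdot A_{\nu'}\subseteq A_{\nu\nu'}$ (any other product vanishes, as distinct powers of $q$ are distinct in $\Z_l$). Re-indexing each $\nu=q^{d}$ by its exponent $d$ turns this multiplicative decomposition into a $\Z_{\ge 0}$-grading of $A$ as a $\Z_l$-algebra whose degree-zero part is $\prod_\l\Z_l$, separable over $\Z_l$, and which becomes the Koszul grading of \ref{PropPBGSkoszul2} after inverting $l$; in particular it is generated in degrees $0$ and $1$, a condition on a torsion-free module of generators and hence already detectable over $\Q_l$.

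\medskip
It then remains to check that this grading is Koszul in the relative sense over $\Z_l$. Since $A$ is $\Z_l$-flat, its degree-zero part is separable, and $\Z_l$ is a principal ideal domain, this reduces---by the usual syzygy exact sequences and Nakayama's lemma, using that the syzygies are again free over $\Z_l$---to Koszulity of the two fibres $A\otimes_{\Z_l}\Q_l$ and $A\otimes_{\Z_l}\F_l$. The first fibre is Koszul by \ref{PropPBGSkoszul2}. For the second I would invoke the parity hypothesis: because all $IC^{\Z_l}$ are parity, formation of the relevant $\mathrm{Hom}$- and $\mathrm{Ext}$-groups among the $P_\l$ commutes with $-\otimes_{\Z_l}\F_l$, so $A\otimes\F_l$ is $\mathrm{End}(P^{\F_l})$ for the minimal projective generator $P^{\F_l}$ of $\cal P_\Lambda(X,\F_l)$ with its induced grading and, more to the point, $\mathrm{Ext}^\bullet_A(A_0,A_0)$ is $\Z_l$-torsion-free; its concentration in internal degree equal to cohomological degree is then inherited from characteristic zero, which is exactly $\Z_l$-Koszulity.

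\medskip
The hard part will be the lattice-splitting in the second step. Over $\Q_l$ the weight (equivalently Koszul) decomposition is always available, but the idempotents cutting it out need not preserve the lattice $A$; it is precisely separatedness of $wt(X)$ that keeps the factors of the characteristic polynomial of $F$ coprime modulo $l$ and thereby lets the decomposition descend to $\Z_l$, a step which genuinely fails when two weights collide. The secondary difficulty, handled by the parity assumption, is ruling out $l$-torsion in the Ext-groups so that characteristic-zero Koszulity upgrades to Koszulity over $\Z_l$. Both points are carried out in \cite{janGrass}.
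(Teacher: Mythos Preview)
The paper's own ``proof'' here is nothing more than a citation to \cite{janGrass}; there is no argument in the present paper to compare against. Your sketch is a reasonable outline of the strategy one expects to find there: extract a weight grading on $A$ from the Frobenius action, use separatedness of $wt(X)$ to lift the eigenspace idempotents from $\Q_l$ to $\Z_l$ via Hensel/CRT, and then verify the relative Koszul condition by combining the known characteristic-zero Koszulity with torsion-freeness of the relevant $\mathrm{Ext}$-groups coming from the parity hypothesis.

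One point deserves tightening. You write that generation in degrees $0$ and $1$ is ``a condition on a torsion-free module of generators and hence already detectable over~$\Q_l$.'' That is not true in general: a graded $\Z_l$-lattice can fail to be generated in degree~$1$ even when its rationalization is (take $A=\Z_l\oplus\Z_l x\oplus\Z_l y$ with $\deg x=1$, $\deg y=2$, $x^2=ly$). What does work is the two-fibre argument you give afterwards: if the multiplication map $A_1\otimes_{A_0}A_{n-1}\to A_n$ has cokernel $C$, then $C\otimes\Q_l=0$ forces $C$ to be torsion, and $C\otimes\F_l=0$ (from Koszulity, or even quadraticity, of $A\otimes\F_l$) forces $C=0$ by Nakayama. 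So the claim is salvageable, but only once you have actually established that $A\otimes\F_l$ is Koszul with the induced grading; your argument for that fibre is a bit telegraphic and would benefit from stating explicitly why parity of the $IC^{\Z_l}$ gives $\mathrm{End}(P^{\Z_l})\otimes\F_l\cong\mathrm{End}(P^{\F_l})$ and why the Frobenius grading on the latter is Koszul.
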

But now checking the assumptions for all $\ol X_w$ is a non trivial task. We need to make sure that:
\begin{itemize}
\item All $IC^{\Z_l}$-sheaves on each approximation step $\ol X_n$ are parity.
\item There exists a bound on the Frobenius eigenvalues occurring, which is uniform over all $X_n$.
\end{itemize}
There are two key insights for this task. 
\begin{enumerate}
\item Controlling parity and eigenvalues for $G$ acting on $X$ can be reduced to controlling 
parity and eigenvalues for $X$ (non-equivariant) and $G$ acting on a point.
\item For a connected solvable group (for example a Borel) acting on a point parity and eigenvalue questions 
boil down to easy questions about perverse sheaves on $\mathbb P^n$. 
\end{enumerate}
Let us be more precise about the first statement.
Let $X$ be an acyclically stratified variety, which satisfies the BGS-condition
\footnote{For example partial flag varieties satisfy this condition.}, equipped with a compatible action by a group $G$. 
Suppose that there exists an approximation $B_n$ of $BG$ 
that satisfies the assumptions of \ref{thmMainModKoszul2} with a uniform bound. Then we will construct an approximation $X_n$ 
subject to the above conditions. Hence $D^b_{G}(X,\Z_l)$ is formal. 
The strategy of the proof is as follows:  
Let $E_n\rar B_n$ be the universal bundle. Let us pretend for a moment, that this bundle is trivial
\footnote{Of course, quite the opposite is actually 
case.} 
$E_n=G\times B_n$. 

Then one can check, that a product inherits the following from its factors:
\begin{itemize}
\item The property of the $IC$-sheaves being parity.
\item The BGS-condition.
\item The bounds on relevant Frobenius eigenvalues are the sum of the bounds of the factors. 
\end{itemize}
Hence $X_n:=E_n\times X$ would give us the desired approximation.
Now it remains to fix the problem, that $E_n\rar B_n$ is not trivial.
For this purpose we will show, that all three of the above items can be controlled locally in the Zariski topology. 
Hence if $E_n\rar B_n$ is locally trivial in the Zariski topology everything is fine.

This leads us to the second question: Why is $E_n\rar B_n$ easy for solvable $G$?
For example if $G=\mathbb G_m$, we can choose $BG_n=\mathbb P^n$. 
If $G=T$ is a torus instead, $BT_n$ will be a product of projective spaces, which is still very manageable.
Finally any connected solvable group is homotopy equivalent to a torus, which again allows us to reduce to projective space.

\subsection*{Acknowledgments}
I want to thank Wolfgang Soergel, Matthias Wendt and Geordie Williamson for helpful discussions and interest.
This work was supported by the DFG via SPP 1388.

\section{The non-equivariant situation}
In this section we recall terminology and results in the non-equivariant situation. 

Let $\K$ be a finite extension of $\Q_l$ and denote by $\O$ its ring of integers. Let $\varpi \in \O$ be a 
uniformizing parameter and $\F:=\O/\varpi$ be
the residue field. For instance $\K=\Q_l$ and $\O=\Z_l$ and $\F=\F_l$.
Let $l \neq p$ be primes and $q$ be a power of $p$. 
\subsection{The six functors}

By $D^b_c(X,\E)$ we denote constructible derived category of a variety $X$ 
over a perfect field of characteristic different from $l$ with coefficients in $\E$.
It fits into a six functor formalism $f_*,f^*,f_!,f^!,\cal Hom,\otimes$\footnote{From now on, we will often use 
the same notation for a functor and its derived counterpart. For example $\otimes$ means $\overset{L}{\otimes}$ etc.}. 
The six functors commute with extension of scalars and pullback from varieties $X_0$ over $\F_q$ to their basechange $X$ over $\ol \F_q$. For example we have a canonical isomorphism
$$\F \otimes \cal Hom(\cal F, f_! \cal G)=\cal Hom(\F\otimes \cal F,  f_! (\F\otimes\cal G))$$
References for the six functor formalism are for example:
\begin{itemize}
\item \cite{SGA4} for $\E=\F$.
\item \cite{Ekedahl} for the passage from $\F$ to $\O$.
\item \cite{Weil2} for the passage from $\O$ to $\K$.
\end{itemize}

\subsection{Acyclically stratified varieties}
We recall and tweak some basic definitions and notation from \cite{RSW} and \cite{janGrass}.
Let $X$ be a variety over a field $k$, together with a finite decomposition into locally closed smooth affine irreducible subvarieties:
 $$X=\bigsqcup_{\l \in \Lambda} X_{\l}$$
We will denote the dimension of $X_\l$ by $d_\l$ and the inclusion by $l_\l:X_\l \inj X$. 
The inclusion of the closure of a stratum will be denoted by $\ol l_\l : \ol X_\l \inj X$.

If $k$ is algebraically closed, we say that $X=\bigsqcup_{\l \in \Lambda} X_\l$ is a stratification, if
$$l^*_\l l_{\mu*} \E$$
has constant cohomology sheaves for all $\l,\mu$. A cell stratification is a stratification, such that
$X_\l \cong \A^{d_\l}$. 
Typical examples of cell stratified varieties are partial flag varieties equipped with their decomposition into Bruhat cells:
$$G/P=\bigsqcup BwP/P$$

An acyclic stratification is a stratification all of whose strata are acyclic. In other words this means, that the strata's cohomology
looks like the cohomology of $\mathbb A^n$:
$$H^\bullet(X_\l,\O)=\O$$
Typical examples of acyclic stratifications arise by taking fiber bundle with fibers $\A^n$ over cell stratifications.
In literature results are often stated with cell stratification assumptions, but proofs only use acyclicity. 
We will cite such statements without further notification.

Given an acyclically stratified variety, we denote by $D^b_\Lambda(X,\E)$ the category of all constructible complexes $\cal F$ such that 
$l_\l^*\cal F$ and $l_\l^! \cal F$ both have constant cohomology sheaves for all $\l$.
It is an idempotent complete triangulated category. 
We will be sloppy and usually refer to objects of $D^b_\Lambda(X,\E)$ as sheaves. 
By $\P_\Lambda(X,\E)\subset D^b_\Lambda(X,\E)$
we denote the full subcategory of perverse sheaves.

To be more precise in the case $\E=\O$ there are two dual categories 
which one might call ``perverse sheaves''. We use the $t$-structure $p_{1/2}$ and not $p_{1/2}^+$ in the terminology of \cite[3.3.4.]{BBD}.
In other words, we choose the one which gives $\P(pt,\O)=mod- \O$. 

If $X=G/P$ is a partial flag variety,
equipped with the Bruhat stratification, we also use the notations 
$$D^b_{(B)}(X,\E):=D^b_\Lambda(X,\E) \text{ and } \P_{(B)}(X,\E):=\P_\Lambda(X,\E)$$

From now on we work with objects (varieties, sheaves, \ldots) defined over a field $k$, where $k$ is either $\F_q$ or its algebraic closure $\ol \F_q$.
As usual objects over $\F_q$ are denoted by $X_0,\cal F_0,\ldots$, while their 
base-change to $\ol \F_q$ is denoted by $X,\cal F,\ldots$.

We say that a locally closed decomposition $X_0=\bigsqcup_{\l \in \Lambda} X_{\l,0}$ is a (acyclic) stratification if its basechange
$X=\bigsqcup_{\l \in \Lambda} X_{\l}$ is. In this case
we denote by $D^b_\Lambda(X_0,\E)$ all constructible complexes whose base-change lands in $D^b_\Lambda(X,\E)$.
Again this is an idempotent complete triangulated category.
The category $\cal P_\Lambda(X_0,\E)\subset D^b_\Lambda(X_0,\E)$ is defined in a similar way.


\begin{notation}\label{notationCanonicalSheaves}
 Let $X$ be an acyclically stratified variety, and $X_\l$ be a stratum. Then there are a couple of canonically associated perverse sheaves
on $X$. We will introduce notation for them here:
\begin{itemize}
\item Let $\Delta_\l:=\Delta^{\E}_\l:={l_\l}_! \E[d_\l]$ denote the standard perverse sheaf.
\item Let $\nabla_\l:=\nabla^{\E}_\l:={l_\l}_* \E[d_\l]$ denote the costandard perverse sheaf.
\item Let $IC_\l:=IC^{\E}_\l:={l_\l}_{!*} \E[d_\l]$ denote the intersection cohomology complex. 
\end{itemize}
If the stratification is defined over $\F_q$, the same formulas define $\Delta_{\l,0},\nabla_{\l,0},IC_{\l,0}$.
\end{notation}

\subsection{Non-equivariant formality}
We recall the main result of \cite{janGrass}. 
It works only for spaces which satisfy two conditions: $IC^\O$-parity and the BGS-condition. 

Recall from \cite{JMWparity} that an object $\cal F\in D^b_\Lambda(X,\E)$ is called 
even (odd) if for all $\l$ the objects $l^*_\l(\cal F)$ and $l^!_\l(\cal F)$ 
have constant torsion free cohomology sheaves which vanish in odd (even) degrees.
An object $\cal F\in D^b_\Lambda(X,\E)$ is called parity, if it is a direct sum of an even and an odd object.
\begin{defi}
 Let $X=\bigsqcup X_\l$ be a stratified variety. We say that $X$ satisfies $IC^\E$-parity, if for each $\l$ the sheaf $IC^\E_\l$
is parity.
\end{defi}
\begin{defi}\label{condTate}
Let $X_0=\bigsqcup X_{\l,0}$ be a stratification. 
We say that it satisfies the BGS-condition or that it is BGS, if 
for all $i\in \Z$ and $\l, \mu \in \Lambda$ the sheaf $\cal H^i(l_\mu^* {l_\l}_{!*}\cal \K [d_\l])$
vanishes if $i+d_\l$ is odd and is isomorphic to a direct sum of copies of $\K(\frac{-d_\l-i}{2})$ if $i+d_\l$ is even.
\end{defi}
Given an acyclically stratified variety, say satisfying $IC^{\E}$-parity, we can extend our list \ref{notationCanonicalSheaves} of canonical sheaves
by projective covers $P_\l^\E \surj IC_\l^\E$. See \cite{janGrass} for their precise construction.
Each $P_\l$ admits also a lift $P_{\l,0}$ to $X_0$, which is constructed in \cite{janGrass}.
Let $P:= \bigoplus P_\l$. Then $P$ is a minimal projective generator and the lifts $P_{\l,0}$ induce a Frobenius action on $End(P)$. 
Up to a suitable normalization the eigenvalues of this action are recorded by the set $wt(X) \subset \{1, \q, \q^2, \ldots\}$.
We define $wr(X)\in \N$ to be greatest exponent occurring $wt(X)$ plus one. We say that $wt(X)$ is separated if $\q \mapsto q$ induces an injection
$wt(X) \inj \F$. See \cite{janGrass} for precise definitions.
We are now able to state the promised theorem:
\begin{thm}\label{thmMainModKoszul}
 Let $X_0$ be a cell stratified variety. Suppose that the $IC^\O$-sheaves are parity and that 
the BGS-condition holds. 
Let $P^\E=\bigoplus P_\l$. If $wt(X)$ is separated, then
$$A:=End(P^\E)$$
admits a $\E$-Koszul\footnote{This is a relative variant of the notion of Koszul grading. 
See \cite{janGrass} for a precise definition.} grading.
\begin{proof}
\cite{janGrass}
\end{proof}
\end{thm}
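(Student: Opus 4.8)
The plan is to reduce the statement to the $\K$-linear case, which is Theorem~\ref{PropPBGSkoszul2} (i.e.\ \cite[4.4.4]{BGS}), and then to \emph{descend} the resulting Koszul grading along $\O\rar\K$ and $\O\rar\F$, using separatedness of $wt(X)$ to prevent Frobenius eigenvalues from colliding modulo $\varpi$.

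First I would set up the base change. The $IC^\O$-parity hypothesis is exactly what makes the projective generator and all relevant $\mathrm{Hom}$- and $\mathrm{Ext}$-modules behave well over $\O$: following \cite{janGrass} (ultimately because $\mathrm{Hom}$-complexes between parity objects are $\O$-free) one has $\mathrm{End}(P^\O)\otimes_\O\K\cong\mathrm{End}(P^\K)$ and $\mathrm{End}(P^\O)\otimes_\O\F\cong\mathrm{End}(P^\F)$, together with the analogous freeness and base-change compatibility for $\mathrm{Ext}^\bullet_{\cal P_\Lambda(X,\O)}(IC^\O,IC^\O)$. It therefore suffices to produce an $\O$-Koszul grading on the $\O$-order $A^\O:=\mathrm{End}(P^\O)$: the $\F$-case follows by reduction mod $\varpi$ (since $\O$-Koszulity is by definition stable under $-\otimes_\O\F$), and the $\K$-case is Theorem~\ref{PropPBGSkoszul2}.

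Next, the grading itself. The lifts $P_{\l,0}$ endow $A^\O$ with an $\O$-linear Frobenius endomorphism $\phi$ whose normalized eigenvalues over $\K$ are the elements of $wt(X)$; by purity (the BGS-condition) $\phi$ acts semisimply on $A^\K$ and its eigenspace decomposition is the Koszul grading supplied by \cite{BGS}. To move this to $\O$: each element of $wt(X)$ is a power of $q$, hence a unit in $\O$ (as $p\neq l$), and separatedness says precisely that the finitely many relevant eigenvalues $\q^0,\dots,\q^{wr(X)-1}$ stay pairwise distinct in $\F$; consequently every difference $\q^i-\q^j$ with $i\neq j$ is a unit in $\O$, so the linear factors $T-\q^i$ are pairwise comaximal in $\O[T]$. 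Applying the Chinese remainder theorem to $\O[\phi]$ acting on the finite free $\O$-module $A^\O$ yields an $\O$-module splitting $A^\O=\bigoplus_{i\geq 0}A^\O_i$ into generalized $\q^i$-eigenspaces, each a free $\O$-direct summand, base-changing to the honest eigenspace decompositions over $\K$ and over $\F$. Since $\phi$ is induced by pullback along the Frobenius it is multiplicative, so $A^\O_i\cdot A^\O_j\subseteq A^\O_{i+j}$ (a routine binomial argument for generalized eigenspaces), and $A^\O_i=0$ outside the range $0\leq i<wr(X)$. Finally $A^\O_0$ base-changes to $L_\K=\bigoplus_\l\mathrm{End}(IC^\K_\l)$ and to $A^\F_0$, and $\O$-freeness identifies it with $\bigoplus_\l\mathrm{End}(IC^\O_\l)$, the ``relatively semisimple'' degree-zero part. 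Thus $A^\O$ is a non-negatively graded $\O$-order with the expected degree-zero piece.

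It remains to check Koszulity, i.e.\ that $L:=A^\O_0$ carries a linear graded projective $A^\O$-resolution, equivalently that $\mathrm{Ext}^i_{A^\O}(L,L)$ lives in internal degree $i$. Over $\K$ this is \cite[4.4.4]{BGS}. I would deduce the $\O$-statement by comparing minimal projective resolutions: by the first step the minimal graded projective $A^\O$-resolution $P^\bullet$ of $L$ has $\O$-free terms, and it base-changes to the minimal resolutions over $A^\K$ and $A^\F$ (the graded radical of $A^\O$ reduces and extends correctly, so minimality is preserved under $-\otimes_\O\K$ and $-\otimes_\O\F$); hence the graded Betti numbers of $P^\bullet$ coincide with those over $\K$, and linearity of the $\K$-resolution forces $P^\bullet$ to be linear. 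I expect this resolution comparison to be the main obstacle: one must rule out any ``hidden $\O$-torsion'' in $\mathrm{Ext}^\bullet_{A^\O}(L,L)$ that could push a minimal generator into the wrong internal degree, and it is precisely the parity hypothesis — via the $\O$-freeness of all the $\mathrm{Ext}$-modules in sight — that excludes this. Everything else is formal once the eigenvalue separation of the previous step is in place.
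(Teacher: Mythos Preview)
The paper does not prove this theorem: its entire proof is the citation ``\cite{janGrass}'', so there is no in-paper argument to compare your proposal against. Your sketch is a plausible reconstruction of what such a proof should look like --- use parity to get $\O$-freeness and base change for $End(P^\E)$, use separatedness of $wt(X)$ to split the Frobenius action into an $\O$-module direct sum via the Chinese remainder theorem, and then transfer Koszulity from the known $\K$-case by comparing minimal graded projective resolutions --- and these are indeed the ingredients one expects, given how the paper describes $wt(X)$ and the role of separatedness elsewhere. Whether this matches the actual argument in \cite{janGrass} in detail (e.g.\ whether Koszulity is checked via resolution comparison or via a direct parity/Ext computation) cannot be determined from the present paper.
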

Formality is a consequence of Koszulity:
\begin{cor}
 Let $X_0$ be a cell stratified variety. Suppose that the $IC^\O$-sheaves are parity and that 
the BGS-condition holds. If $wt(X)$ is separated, then there is an equivalence of categories:
$$D^b_\Lambda(X,\E) \cong per \- Ext^\bullet(IC)$$
\begin{proof}
We argue that there is a chain of equivalences as follows:
$$D^b_\Lambda(X,\E) = D^b(\P_\Lambda(X,\E)) = D^b(mod \- A) \cong per \- End^\bullet(L^\bullet) = per \- Ext^\bullet(IC)$$
First the realization functor $D^b(\P_\Lambda(X,\E)) \rar D^b_\Lambda(X,\E)$ is an equivalence since all strata are acyclic
\cite[2.3.4]{RSW}.
Now by \ref{thmMainModKoszul} we know that $D^b(\P_\Lambda(X,\E)) = D^b(mod \- A)$ for a Koszul ring $A$. 
Let $L$ be the $A$-module corresponding to the direct sum of all $IC$-complexes and $L^\bullet$ by a resolution by graded projectives.
Then $Hom^\bullet(L^\bullet,\_)$ induced a fully faithful functor between our category and the derived category of the dg-algebra $End^\bullet(L^\bullet)$:
$$D^b(mod \- A)\rar Der \-End^\bullet(L^\bullet)$$
Under this functor the object $L\cong L^\bullet$ is mapped to $End^\bullet(L^\bullet)$. 
Since both categories are idempotent complete, our functor restricts to an equivalence 
$$D^b(mod \- A)= \langle L \rangle^{\ominus} \rar \langle End^\bullet(L^\bullet) \rangle^{\ominus} =per \-End^\bullet(L^\bullet)$$
Here $\langle T \rangle^{\ominus}$ denotes the thick subcategory generated by some object $T$.
Finally Koszulity allows us to construct a roof of quasi-isomorphisms connecting the $dg$-algebras via Deligne's argument
\cite[Prop 6]{OlafThesis}:
$$End^\bullet(L^\bullet) \lar S \rar Ext^\bullet(L^\bullet)=Ext^\bullet(IC)$$ 
\end{proof}
\end{cor}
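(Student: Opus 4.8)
The plan is to realize the stated equivalence as a composite of four equivalences, running the $\Q_l$-argument sketched in the introduction but feeding in Theorem~\ref{thmMainModKoszul} in place of the $\Q_l$-Koszulity statement~\ref{PropPBGSkoszul2}. First, since every stratum $X_\l$ is acyclic, the realization functor $D^b(\P_\Lambda(X,\E)) \rar D^b_\Lambda(X,\E)$ is an equivalence by \cite[2.3.4]{RSW}, so it suffices to describe $D^b(\P_\Lambda(X,\E))$.

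Next I would invoke Theorem~\ref{thmMainModKoszul}: its hypotheses are precisely $IC^\O$-parity, the BGS-condition, and separatedness of $wt(X)$, so $A := End(P^\E)$ carries a $\E$-Koszul grading, where $P^\E = \bigoplus P_\l$ is the minimal projective generator of $\P_\Lambda(X,\E)$. Because $P^\E$ is a projective generator, $Hom(P^\E,-)$ gives an equivalence $\P_\Lambda(X,\E) \cong mod\-A$, hence $D^b(\P_\Lambda(X,\E)) = D^b(mod\-A)$; and since a $\E$-Koszul algebra is noetherian of finite global dimension, $D^b(mod\-A) = per\-A$.

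The third step is $dg$-Morita theory. Let $L$ be the $A$-module corresponding to the direct sum of all $IC$-complexes (equivalently the sum $A_0$ of the simples), and let $L^\bullet \surj L$ be a resolution by graded projective $A$-modules. The functor $Hom^\bullet(L^\bullet,-)$ embeds $D^b(mod\-A)$ fully faithfully into $Der\-End^\bullet(L^\bullet)$, sends $L \cong L^\bullet$ to $End^\bullet(L^\bullet)$, and by idempotent-completeness restricts to an equivalence $D^b(mod\-A) = \langle L \rangle^\ominus \cong \langle End^\bullet(L^\bullet) \rangle^\ominus = per\-End^\bullet(L^\bullet)$. Finally, because $L^\bullet$ was chosen from graded projectives, $End^\bullet(L^\bullet)$ carries an internal grading alongside its cohomological one, and Deligne's argument \cite[Prop 6]{OlafThesis} yields a roof of multiplicative quasi-isomorphisms $End^\bullet(L^\bullet) \lar S \rar Ext^\bullet(L^\bullet) = Ext^\bullet(IC)$ with zero differential on the right, giving $per\-End^\bullet(L^\bullet) \cong per\-Ext^\bullet(IC)$.

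I expect the real difficulty to lie in this last step in the modular setting. Deligne's ``purity implies formality'' trick classically uses semisimplicity of Frobenius and splitting of the weight filtration, which is delicate over $\O$; the relative notion of $\E$-Koszulity from \cite{janGrass} is engineered to carry exactly what survives, but one still has to check that the extra grading on $End^\bullet(L^\bullet)$ is compatible with the $dg$-structure and that the intermediate algebra $S$ can be constructed $\E$-linearly, and one must verify finite global dimension of the relative Koszul ring $A$. These points, rather than the formal scaffolding, are where modular coefficients require care beyond a verbatim repetition of Schn\"urer's argument.
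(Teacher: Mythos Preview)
Your proof is correct and follows the paper's argument essentially verbatim: realization functor via \cite[2.3.4]{RSW}, then Theorem~\ref{thmMainModKoszul} to get the Koszul ring $A$, then $dg$-Morita theory with a graded projective resolution $L^\bullet$ of the simples, and finally Deligne's roof via \cite[Prop 6]{OlafThesis}. The concerns you raise in your last paragraph are legitimate but are not treated in the paper either; it simply takes the $\E$-Koszul grading produced by \cite{janGrass} as sufficient input for Deligne's argument and for finite global dimension, so you may drop that caveat from the proof proper.
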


\section{The equivariant derived category}
The equivariant derived category was introduced in \cite{BernsteinLunts}. In the following section we recall its definition and basic properties. 
While Bernstein and Lunts state their formalism in the topological setting, the essential ingredient 
is smooth basechange, which is also available for $l$-adic sheaves. As they remark, this allows to transfer the results 
to our situation. The translation is straightforward, after one has established a formalism of ``torsors''.

\subsection{$G$-torsors}
Let $X$ be a $G$-space. If the action is nice, the equivariant derived category is literally
the derived category of $X/G$. Otherwise is a well behaved replacement for the possibly non-existent category $D^b_c(X/G,\E)$.
To turn this idea into a precise definition, we should first explain what we mean by quotient.

Given a $G$-variety $X$ and a map $f:X\rar T$ into some other variety $T$, we say that $f$ is invariant, if 
$$f \circ \rho = f \circ \pi$$
where $\rho$ and $\pi$ are the action and the projection respectively. 
The quotient $X/G$ is the coequalizer of the action and projection map (if it exists):
$$X/G:=coker(G\times X \rightrightarrows X)$$
In other words it is an initial invariant map. Often it does not exist and even if it does it is badly behaved. 
It will be well behaved, if $X\rar X/G$ is a torsor:
\begin{defi}
 Let $G$ be an algebraic group. 
\begin{itemize}
 \item 
A $G$-torsor consists of a $fpqc$\footnote{In our situation, the word $fpqc$ can be replaced by $flat$ and surjective 
everywhere.}-morphism of varieties $p:\wt T \rar T$ 
and a $G$-action $\rho: G\times \wt T \rar \wt T$ such that $p$ is invariant under the action and in addition the diagram 
$$
\begin{xy}
 \xymatrix{
G \times \wt T \ar[d]_\pi \ar[r]^\rho & \wt T \ar[d]^p \\
\wt T \ar[r]^p & T
}
\end{xy}
$$
is cartesian.
\item
A map of torsors $p:\wt T \rar T$ to $q:\wt S \rar S$ consists of an equivariant map $\wt f:\wt T \rar \wt S$ and 
a map $f:S\rar T$ such that the diagram 
$$
\begin{xy}
 \xymatrix{
\wt T \ar[d] \ar[r] & \wt S \ar[d] \\
T \ar[r] & S
}
\end{xy}
$$
commutes.
\item A torsor is called trivial, if it is isomorphic to $G\times T\rar T$, where the $G$-action is by left multiplication on the first factor.
\item 
Given a topology\footnote{By topology we mean a Grothendieck pre-topology.} $\tau$ on the category of varieties, we say that a $G$-torsor is locally trivial with respect to $\tau$,
if there exists a $\tau$-covering $(U_i\rar T)_{i \in I}$ and maps of torsors 
$$
\begin{xy}
 \xymatrix{
\wt U_i  \ar[d] \ar[r] & \wt T \ar[d] \\
U_i \ar[r] & T
}
\end{xy}
$$
such that each $\wt U_i$ is trivial.
\end{itemize}
\end{defi}

By definition any torsor is locally trivial with respect to the $fpqc$-topology. 
Let us verify that the base of a torsor is the quotient of the total space by the action:
\begin{lem}\label{LemBaseTorsorIsQuot}
 Let $\pi:X\rar Y$ be a $G$-torsor.
 Then we have $Y=X/G$. 
\begin{proof}
We need to show that $Y=coker(G\times X \rightrightarrows X)$. In other words for every variety $T$ we need to give a natural identification:
$$Hom(Y,T) \overset{!}{=} ker(Hom(X,T)\rightrightarrows Hom(G\times X,T))$$
By \cite[2.55]{vistolyfib} the functor of points of any scheme $T$ is a sheaf in the $fpqc$-topology.
Applying this to the $fpqc$-cover $X\rar Y$ yields the result.
\end{proof}
\end{lem}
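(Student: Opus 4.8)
The plan is to check that $Y$ satisfies the universal property defining $X/G$, using that representable functors are $fpqc$-sheaves. Recall that $X/G$ is the coequalizer of $\rho,\pi:G\times X\rightrightarrows X$, so it suffices to produce, for every variety $T$, a natural bijection
$$\mathrm{Hom}(Y,T)\;\overset{\sim}{\longrightarrow}\;\ker\big(\mathrm{Hom}(X,T)\rightrightarrows\mathrm{Hom}(G\times X,T)\big),$$
where the two arrows are precomposition with $\rho$ and with $\pi$. Since $\pi\circ\rho=\pi\circ\pi$ by invariance of $\pi$ (part of the torsor axioms), the map $g\mapsto g\circ\pi$ lands in this equalizer, giving the canonical candidate for the bijection.

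Next I would invoke descent. By \cite[2.55]{vistolyfib} the functor of points $h_T=\mathrm{Hom}(\_,T)$ is a sheaf for the $fpqc$-topology. The morphism $\pi:X\rar Y$ is flat, surjective, and (as we work with varieties) quasi-compact, hence it is by itself an $fpqc$-covering of $Y$. The sheaf axiom for $h_T$ along this covering says precisely that
$$\mathrm{Hom}(Y,T)\;=\;\ker\big(\mathrm{Hom}(X,T)\rightrightarrows\mathrm{Hom}(X\times_Y X,T)\big),$$
the two arrows now being precomposition with the two projections $\mathrm{pr}_1,\mathrm{pr}_2:X\times_Y X\rar X$.

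The only remaining point — and the one place where the torsor hypothesis is actually used — is to identify these two equalizers. The cartesian square in the definition of a torsor exhibits an isomorphism $G\times X\xrightarrow{\ \sim\ }X\times_Y X$ under which $\mathrm{pr}_1$ corresponds to $\rho$ and $\mathrm{pr}_2$ to $\pi$; this is exactly the content of that square. Transporting the equalizer above along this isomorphism turns it into $\ker(\mathrm{Hom}(X,T)\rightrightarrows\mathrm{Hom}(G\times X,T))$, matching the coequalizer presentation of $X/G$. All the identifications are manifestly natural in $T$, so by Yoneda $Y\cong X/G$, compatibly with the maps from $X$. I expect the argument to be short; the only mild care needed is in checking that $\pi$ really is an $fpqc$-cover on its own (quasi-compactness of varieties) and that the cartesian square is read off with the projections matched to $(\rho,\pi)$ in the right order.
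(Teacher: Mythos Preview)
Your proof is correct and follows essentially the same approach as the paper: both reduce to the fpqc sheaf property of $\mathrm{Hom}(\_,T)$ applied to the cover $X\rar Y$, then identify the \v{C}ech equalizer with the coequalizer defining $X/G$. You have simply made explicit the step the paper leaves to the reader, namely that the torsor's cartesian square identifies $X\times_Y X$ with $G\times X$ so that the two projections correspond to $\rho$ and $\pi$.
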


\begin{lem}
 Given a map of torsors, the diagram 
$$
\begin{xy}
 \xymatrix{
\wt T \ar[d] \ar[r]^{\wt f} & \wt S \ar[d] \\
T \ar[r]^f & S
}
\end{xy}
$$
is automatically cartesian.
\begin{proof}
A map of torsors $(f,\wt f)$ is the same thing as a map $f:T\rar S$ along with a map 
$\wt T \rar T\underset{S}{\times} \wt S$ over $T$ which is $G$-equivariant. Hence we may assume $S=T$ and $f=id$.
Using \ref{lemCartesianFpqcLocal} we may pull back the diagram along $\wt S\rar S$ and assume that
$\wt S\rar S=T$ is trivial.
Again by \ref{lemCartesianFpqcLocal} we may pull back along $\wt T\rar T$ and hence assume that $\wt T$ is trivial as well.
Now an inspection shows, that any endomorphism of the trivial torsor is an automorphism.
\end{proof}
\end{lem}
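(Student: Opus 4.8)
The plan is to reduce, by the universal property of fibre products, to the case of a map of torsors lying over the identity, and then to trivialise both torsors by fpqc base change so that the claim becomes an elementary computation with the trivial torsor.

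First I would unwind what a map of torsors is: giving $(f,\wt f)$ as in the statement amounts to giving $f\colon T\rar S$ together with the $G$-equivariant $T$-morphism $\wt T \rar T\underset{S}{\times}\wt S$ induced by $\wt f$ and $p$. Since being cartesian and being fpqc are stable under base change, $T\underset{S}{\times}\wt S\rar T$ is again a $G$-torsor, and the assertion that the original square is cartesian is precisely the assertion that $\wt T \rar T\underset{S}{\times}\wt S$ is an isomorphism. Thus it suffices to treat the case $S=T$, $f=\mathrm{id}$: given two $G$-torsors $\wt T,\wt S$ over $T$ and a $G$-equivariant $T$-morphism $\wt f\colon \wt T\rar\wt S$, show that $\wt f$ is an isomorphism.

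Next I would invoke that being an isomorphism — equivalently, that a commuting square is cartesian — is local in the fpqc topology on the base, i.e. \ref{lemCartesianFpqcLocal}. The projection $\wt S\rar T$ is an fpqc cover by the definition of a torsor, so I may base change the whole situation along it. After this base change the torsor $\wt S$ becomes $\wt S\underset{T}{\times}\wt S\rar\wt S$, which is trivial: this is exactly what the cartesian square in the definition of a torsor says, as it identifies $\wt S\underset{T}{\times}\wt S$ with $G\times\wt S$. Hence I may assume $\wt S$ is the trivial torsor. Now $\wt T\rar T$ is still a torsor, so I base change once more along the fpqc cover $\wt T\rar T$; by the same reasoning $\wt T$ becomes trivial as well, and $\wt f$ becomes a $G$-equivariant morphism of trivial torsors over the identity on the (new) base.

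Finally, by equivariance such a map has the form $(g,t)\mapsto (g\cdot\varphi(t),t)$ for a uniquely determined morphism $\varphi\colon T\rar G$ (the $G$-component of $\wt f$ along the unit section), and $(g,t)\mapsto (g\cdot\varphi(t)^{-1},t)$ is a two-sided inverse; so $\wt f$ is an isomorphism, and the original square is cartesian. The step I expect to be the main obstacle is the descent bookkeeping in the third paragraph: one must know that "cartesian" descends along fpqc covers and that a torsor is split by base change along its own structure map, and one must keep careful track of which square is being pulled back at each stage so that the reduction carried out in the first paragraph does not become circular.
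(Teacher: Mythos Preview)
Your proof is correct and follows essentially the same route as the paper: reduce via the universal property to the case $S=T$, $f=\mathrm{id}$, then use \ref{lemCartesianFpqcLocal} twice (first along $\wt S\rar T$, then along $\wt T\rar T$) to trivialise both torsors, and conclude by checking that a $G$-equivariant self-map of the trivial torsor is invertible. Your final paragraph spells out the ``inspection'' the paper leaves implicit, giving the explicit formula $(g,t)\mapsto(g\varphi(t),t)$ and its inverse.
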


If we want to test, whether a diagram is cartesian, we can do so locally in the $fpqc$-topology:
\begin{lem}\label{lemCartesianFpqcLocal}
 Let $\wt X\rar X$ be a $fpqc$-map of varieties and $D$ be a commutative square:
$$
\begin{xy}
 \xymatrix{
Y' \ar[d] \ar[r] & X' \ar[d] \\
Y \ar[r]&  X
}
\end{xy}
$$
Let $\wt D$ be the pullback of $D$ along $\wt X\rar X$: \footnote{By this we mean, that we are given a commutative cube 
with back side $D$, front $\wt D$ and all other sides cartesian.}
$$
\begin{xy}
 \xymatrix{
\wt Y' \ar[d] \ar[r] & \wt X' \ar[d] \\
\wt Y \ar[r] & \wt X
}
\end{xy}
$$
If $\wt D$ is cartesian, then $D$ is also cartesian
\begin{proof}
Let $T$ be any variety. We need to find a canonical identification 
$$Hom(Y',T) \overset{!}{=} Hom(Y \underset{X}{\times} X',T)$$ Consider the diagram
$$
\begin{xy}
\xymatrix{
 & \wt Y' \ar[d] && \\
\wt Y' \underset{Y'}{\times} \wt Y' \ar[ru] \ar[d] & Y' \ar[dd] \ar[rr] && X' \ar[dd] \\
\wt Y' \ar[ru] \ar[dd] \ar[rr] && \wt X' \ar[dd] \ar[ru] & \\
& Y \ar[rr] && X \\
\wt Y \ar[rr] \ar[ru] && \wt X \ar[ru] & 
}
\end{xy}
$$
where all rectangles are cartesian, except possibly $D$. 
Using faithfully flat descent we have 

\begin{equation}
 Hom(Y',T)=\ker(Hom(\wt Y',T) \rightrightarrows Hom(\wt Y' \underset{Y'}{\times} \wt Y', T)) 
\end{equation}

Now we observe, that 
$$
\begin{xy}
 \xymatrix{
& \wt Y'\\
\wt Y' \underset{Y'}{\times} \wt Y' \ar[d] \ar[ru] & \\
\wt Y' & 
}
\end{xy}
$$
does not depend on $Y'$ at all! Indeed one computes
\begin{align*}
 \wt Y'&=\wt Y \underset{\wt X}{\times} \wt X' \\
	&=(Y\underset{X}{\times} \wt X) \underset{\wt X}{\times} (\wt X \underset{X}{\times} X') \\
&=Y\underset{X}{\times} \wt X \underset{X}{\times} X'
\end{align*}

and 

\begin{align*}
 \wt Y' \underset{Y'}{\times} \wt Y' & = \wt Y'  \underset{X'}{\times} \wt X' \\
&=  (Y\underset{X}{\times} \wt X \underset{X}{\times} X') \underset{X'}{\times} (X' \underset{X}{\times} \wt X) \\
&= Y \underset{X}{\times} \wt X \underset{X}{\times} X' \underset{X}{\times} \wt X
\end{align*}
Furthermore the projections $\wt Y' \lar \wt Y' \underset{Y'}{\times} \wt Y' \rar \wt Y'$ correspond to 
to the maps $(y,\wt x_1,x',\wt x_2)\mapsto (y,\wt x_1,x')$ and $(y,\wt x_1,x',\wt x_2)\mapsto (y,\wt x_2,x')$
under this identification.
Since our description is independent of $Y'$, we may replace $Y'$ by $Y \underset{X}{\times} X'$ in the argument and get 
$$
Hom(Y',T)=\ker(Hom(\wt Y',T) \rightrightarrows Hom(\wt Y' \underset{Y'}{\times} \wt Y', T))=Hom(Y \underset{X}{\times} X',T)
$$
\end{proof}
\end{lem}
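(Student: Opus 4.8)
The plan is to exhibit the canonical comparison morphism $c\colon Y' \rar Y\underset{X}{\times} X'$, which exists because the square $D$ commutes, and to show that it is an isomorphism; since $\wt X \rar X$ is an fpqc cover, it suffices to verify this after base change along it.

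First I would identify the base change of $c$ along $\wt X \rar X$. By the way the cube is built we have $\wt X \underset{X}{\times} Y' = \wt Y'$, and associativity of fibre products gives $\wt X \underset{X}{\times}\bigl(Y\underset{X}{\times} X'\bigr) = \bigl(\wt X\underset{X}{\times} Y\bigr)\underset{\wt X}{\times}\bigl(\wt X\underset{X}{\times} X'\bigr) = \wt Y \underset{\wt X}{\times} \wt X'$. A routine chase shows that under these identifications the base change of $c$ is precisely the comparison morphism $\wt Y' \rar \wt Y \underset{\wt X}{\times} \wt X'$ of the square $\wt D$, which is an isomorphism exactly because $\wt D$ is cartesian. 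It then remains to descend: being an isomorphism is a property local for the fpqc topology on the target, so $c$ is an isomorphism because its pullback along the fpqc morphism $\wt X \rar X$ is. Hence $D$ is cartesian.

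Alternatively, to stay inside the elementary functor-of-points formalism used above for \ref{LemBaseTorsorIsQuot}, I would argue as follows. Fix a variety $T$. Faithfully flat descent along the cover $\wt Y' \rar Y'$ (a base change of $\wt X \rar X$, hence fpqc), together with the fact that the functor of points of a scheme is an fpqc sheaf \cite{vistolyfib}, identifies $Hom(Y',T)$ with $\ker\!\bigl(Hom(\wt Y',T) \rightrightarrows Hom(\wt Y' \underset{Y'}{\times} \wt Y',T)\bigr)$. Now $\wt Y'$ and the pair of projections $\wt Y' \lar \wt Y'\underset{Y'}{\times}\wt Y' \rar \wt Y'$ do not depend on $Y'$ at all: using that $\wt D$ is cartesian one computes $\wt Y' = Y\underset{X}{\times}\wt X\underset{X}{\times} X'$ and $\wt Y'\underset{Y'}{\times}\wt Y' = \wt Y'\underset{X'}{\times}\wt X' = Y\underset{X}{\times}\wt X\underset{X}{\times} X'\underset{X}{\times}\wt X$, the two projections forgetting one of the two $\wt X$-factors. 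Since plugging $Y\underset{X}{\times} X'$ in place of $Y'$ reproduces the very same $\ker$-diagram, one gets a natural identification $Hom(Y',T) = Hom(Y\underset{X}{\times} X',T)$, and this is the claim.

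The one point that needs genuine care — and the step I would flag as the main, if mild, obstacle — is checking that the identifications of $\wt Y'$ and of $\wt Y'\underset{Y'}{\times}\wt Y'$ above are compatible with the two structure projections in precisely the manner that descent demands, so that the abstract bijection $Hom(Y',T)\cong Hom(Y\underset{X}{\times} X',T)$ is really the one induced by the canonical morphism $c$, and not some unrelated isomorphism. Everything else is a formal manipulation of fibre products.
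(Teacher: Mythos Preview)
Your proposal is correct. Your second (``alternatively'') argument is essentially the paper's proof: the paper also applies faithfully flat descent to write $Hom(Y',T)$ as the equalizer of $Hom(\wt Y',T) \rightrightarrows Hom(\wt Y' \underset{Y'}{\times} \wt Y', T)$, then computes $\wt Y' = Y\underset{X}{\times}\wt X\underset{X}{\times} X'$ and $\wt Y'\underset{Y'}{\times}\wt Y' = Y\underset{X}{\times}\wt X\underset{X}{\times} X'\underset{X}{\times}\wt X$ with the two projections forgetting one $\wt X$-factor, and observes that this data is independent of $Y'$. The subtlety you flag---that the resulting bijection is the one induced by the canonical comparison map $c$---is left implicit in the paper as well.

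Your first argument is a genuinely different and cleaner route: rather than unpacking descent by hand, you form the comparison morphism $c\colon Y'\rar Y\underset{X}{\times}X'$, identify its base change along $\wt X\rar X$ with the comparison morphism of $\wt D$, and invoke the packaged fact that being an isomorphism is fpqc-local. This buys you brevity and avoids the explicit equalizer computation; it also makes the compatibility issue you flag automatic, since you are working with the canonical $c$ from the start. The paper's approach, by contrast, keeps everything at the level of functors of points and explicit fibre-product identities, which is more self-contained but longer.
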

Torsors satisfy a variant of the third isomorphism theorem:
\begin{prop}\label{propIterQuot}
 Let $H \triangleleft G$ be a linear algebraic group along with a normal subgroup.
\begin{enumerate}
\item Let $X\rar X/G$ be a $G$-torsor. Then the quotient variety $X/H$ exists. Furthermore $X\rar X/H$ is a $H$-torsor and $X/H\rar X/G$ is a $G/H$-torsor.
\item Suppose in addition that $X\rar X/G$ and $G/H$ are both locally trivial with respect to a topology $\tau$. Then 
$X\rar X/H$ and $X/H\rar X/G$ are both locally trivial with respect to $\tau$ as well.
\end{enumerate}
\begin{proof}
The key idea is to construct $X/H$ by affine descent. The author learned this insight from Laurent Moret-Bailly 
\cite{MO143159}.

\begin{enumerate}
 \item Consider the following diagram:
$$
\begin{xy}
\xymatrix{
(G/H)\times G \times X \ar@/^0.5cm/[rr]^{(q,g,x)\mapsto (q g^{-1}, gx)} \ar@/_0.5cm/[rr]_{(q,g,x)\mapsto (q,x)} \ar[dd]_{(q,g,x)\mapsto (g,x)} &&  (G/H) \times X \ar[dd]^{(q,x)\mapsto x} &  \\
&&& \\
G\times X \ar@/^0.5cm/[rr]^{(g,x)\mapsto gx} \ar@/_0.5cm/[rr]_{(g,x)\mapsto x} && X \ar[r] & X/G
}
\end{xy}
$$
It satisfies the assumptions of \cite[VII.7.9.]{SGA1} and hence we can apply ``affine descent along $X\rar X/G$''.
This produces a scheme $Y$ along with maps $Y\rar X/G$ and $(G/H) \times X \rar Y$ such that 
both of the following squares are cartesian:
$$
\begin{xy}
 \xymatrix{
G\times Q\times X \ar[d]^{(q,x)} \ar[rr]^{(qg^{-1},gx)} && Q \times X \ar[d] \\
Q \times X \ar[rr] \ar[d] && Y \ar[d] \\
X \ar[rr] && X/G
}
\end{xy}
$$
Here and in future we use(d) abbreviations $Q:=G/H$ and $(qg^{-1},gx)$ for the map $(g,q,x) \mapsto (qg^{-1},gx)$ etc.

Our goal is to show that $Y=X/H$ is the desired quotient.

Define $X\rar Y$ to be the map $X\rar Q\times X \rar Y$. Let us show that $X\rar Y$ is a $H$-torsor.
\begin{itemize}
 \item 
 In order to show that $X\rar Y$ is $H$-invariant, we need to check that both paths in the diagram
 $$
 \begin{xy}
 \xymatrix{
  H \times X \ar[r]^{x} \ar[d]_{hx} & X \ar[d]\\
  X \ar[r] & Y
  }
 \end{xy}
 $$
 coincide. To this end we refine it as follows:
 $$
 \begin{xy}
  \xymatrix{ H \times X \ar[rd]^{(h,e,x)} \ar[dd]_{hx} \ar[rr]^x && X \ar[d]^{(e,x)} \\
  & H \times Q \times X \ar[r]^{(q,x)} \ar[d]_{(q,hx)=(qh^{-1},hx)} & Q \times X \ar[d]\\
  X \ar[r]_{(e,x)} & Q \times X \ar[r] & Y
  }
 \end{xy}
 $$
\item In order to see, that 
$$
\begin{xy}
 \xymatrix{H \times X \ar[d]_x \ar[r]^{hx} & X \ar[d] \\
X \ar[r] & Y
}
\end{xy}
$$
is cartesian, one contemplates the diagram:
$$
\begin{xy}
 \xymatrix{H \times X \ar[d]_{(h,x)} \ar[rrr]^{hx} &&& X \ar[d]^{(e,x)} \\
G \times X \ar[d]_x \ar[r]^{(g,e,x)} & G\times Q \times X \ar[d]^{(q,x)}\ar[rr]^{(qg^{-1},gx)} && Q\times X \ar[d] \\
X \ar[r]_{(e,x)} & Q\times X \ar[rr] && Y
}
\end{xy}
$$
\end{itemize}
Next we need to show that $X/H\rar X/G$ is a $(G/H)$-torsor.
So we need to check that the diagram
$$
\begin{xy}
 \xymatrix{
(G/H) \times (X/H) \ar[d]_x \ar[rr]^{gx} &&  X/H  \ar[d] \\
X/H \ar[rr] && X/G
}
\end{xy}
$$
is cartesian. By \ref{lemCartesianFpqcLocal} we may replace it by its pullback along $X\rar X/G$.
The latter is given by 
$$
\begin{xy}
 \xymatrix{
(G/H) \times (G/H) \times X \ar[d]_{(q_2,x)} \ar[rr]^{(q_1 q_2,x)} && (G/H)\times X \ar[d]^x \\
(G/H) \times X \ar[rr]^x && X
}
\end{xy}
$$
and hence cartesian.

Note that until now, we only know that $X/H$ is a scheme. It is even a variety, since $X/H \rar X/G$ is smooth.
Indeed the point is that smoothness can be tested locally in the $fpqc$-topology \cite[2.36]{vistolyfib}.

\item We need to show that $X\rar X/H$ (resp. $X/H\rar X/G$) are locally trivial. Let $(U_i\rar X/G)_{i \in I}$ be a cover over which $X \rar X/G$ trivializes.
Consider the diagram 
$$
\begin{xy}
 \xymatrix{
X \ar[d] & \ar[l] G\times U_i \ar[d] \\
X/H \ar[d] &\ar[l] (G\times U_i)/H \ar[d] \\
X/G &\ar[l] U_i 
}
\end{xy}
$$
The horizontals are maps of torsors, hence both squares are cartesian. We get that $X/H\rar X/G$ is trivial over $U_i$ and furthermore the question
of local triviality of $X\rightarrow X/H$ is reduced to the question of local triviality of $G\times U_i\rar (G/H) \times U_i$. 
Since $G\rar G/H$ is by assumption locally trivial we are done.
\end{enumerate}
\end{proof}
\end{prop}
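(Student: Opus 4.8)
The plan is to construct $X/H$ by faithfully flat descent along $X\rar X/G$, and then to read off every torsor assertion by testing the relevant squares fpqc-locally. Recall first that, being a torsor, $X\rar X/G$ is an fpqc morphism which becomes trivial after base change along itself: the defining cartesian square identifies $X\times_{X/G}X$ with $G\times X$. Recall also that, since $H\triangleleft G$ is a closed normal subgroup of the linear algebraic group $G$, the quotient group $Q:=G/H$ exists and is again affine. Conceptually the scheme I want is the associated bundle $X\times^{G}(G/H)$, but I will produce it by an explicit descent.

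To carry this out I would descend the trivial picture upstairs. After base change along $X\rar X/G$ the torsor is $G\times X\rar X$, whose $H$-quotient is visibly $Q\times X\rar X$, an \emph{affine} morphism. Under the identification $X\times_{X/G}X\cong G\times X$ the two projections are $(g,x)\mapsto gx$ and $(g,x)\mapsto x$, and $Q\times X$ carries the canonical descent datum given by the isomorphism $Q\times G\times X\rar Q\times G\times X$, $(\bar q,g,x)\mapsto(\bar q\bar g,g,x)$; its cocycle identity over $G\times G\times X$ is just associativity of multiplication in $Q$. Descent of affine morphisms along the fpqc cover $X\rar X/G$, that is \cite[VII.7.9]{SGA1}, then yields an affine $X/G$-scheme $Y$ together with a map $Q\times X\rar Y$ over $X/G$ making the evident squares cartesian. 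Since finite type, separatedness and smoothness all descend, and $Q\times X\rar X$ is smooth, $Y$ is in fact a variety, smooth over $X/G$.

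Next I would verify the three assertions of part (1) by pulling diagrams back along the fpqc cover $X\rar X/G$ and invoking Lemma \ref{lemCartesianFpqcLocal}, since being cartesian is fpqc-local. The morphism $X\rar Y$ is faithfully flat and quasi-compact because its pullback along the fpqc cover $Q\times X\rar Y$ is $G\times X\rar Q\times X$, which inherits these properties from $G\rar Q$; and the diagram $H\times X\rightrightarrows X\rar Y$ exhibits $H\times X$ as $X\times_Y X$ because after pullback along $X\rar X/G$ it becomes the manifestly cartesian diagram assembled from $H\times G\times X\rightrightarrows G\times X\rightrightarrows X$. Hence $X\rar Y$ is an $H$-torsor, so $Y=X/H$ by Lemma \ref{LemBaseTorsorIsQuot}. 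Likewise the $G$-action on $X$ descends to a $Q$-action on $Y$, and the square $Q\times Y\rightrightarrows Y\rar X/G$ becomes, after pullback along $X\rar X/G$, the cartesian square built from $Q\times Q\times X\rightrightarrows Q\times X\rar X$; so $X/H\rar X/G$ is a $Q$-torsor.

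For part (2) I would exploit that the construction commutes with base change. Let $(U_i\rar X/G)_{i}$ be a $\tau$-cover trivializing $X\rar X/G$, so $X\times_{X/G}U_i\cong G\times U_i$; passing to $H$-quotients gives $(X/H)\times_{X/G}U_i\cong(G\times U_i)/H\cong Q\times U_i$, so $X/H\rar X/G$ is trivial over each $U_i$ and hence $\tau$-locally trivial. Over $U_i$ the map $X\rar X/H$ pulls back to $G\times U_i\rar Q\times U_i$, which is the base change of $G\rar G/H$ along $Q\times U_i\rar Q$; pulling back a $\tau$-cover of $G/H$ trivializing $G\rar G/H$ then produces a $\tau$-cover of $Q\times U_i$ over which $G\times U_i\rar Q\times U_i$ is the trivial $H$-torsor, and composing covers shows $X\rar X/H$ is $\tau$-locally trivial. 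I expect the descent step of the second paragraph to be the main obstacle: one must pin down the cocycle condition and the precise form of fpqc descent for affine morphisms being used, and separately confirm that the descended scheme inherits smoothness and the other properties needed to be a variety. Everything afterwards is the routine verification that the relevant squares are cartesian, done locally via Lemma \ref{lemCartesianFpqcLocal}.
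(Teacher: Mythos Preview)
Your proof is correct and follows essentially the same route as the paper: construct $X/H$ by affine descent of $Q\times X\rar X$ along the fpqc cover $X\rar X/G$ via \cite[VII.7.9]{SGA1}, then verify each torsor condition by pulling back along $X\rar X/G$ and invoking Lemma~\ref{lemCartesianFpqcLocal}, with part~(2) reduced to the local triviality of $G\rar G/H$. The only cosmetic difference is that the paper writes out several explicit commutative diagrams where you appeal directly to the fpqc-locality lemma; your presentation is slightly more streamlined but the argument is the same.
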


\subsection{Definition of the equivariant derived category}
Let us give a definition of the equivariant derived category. We start with some motivation. Let $X$ be a variety equipped with an action by 
a linear algebraic group $G$.

Imagine we had some ``classifying space'' of constructible sheaves. By this we mean a space $D_c^b(\E)$ such that 
maps from a variety $X$ to $D_c^b(\E)$ are the same thing as elements of $D_c^b(X,\E)$. 
Imagine also that there existed a quotient $X//G$. Then we could simply define the 
equivariant derived category $D_{G,c}^b(X,\E)$ to be maps form $X//G$ to $D_c^b(\E)$.

Now while $X//G$ and $D_c^b(\E)$ don't make sense in the category of varieties, they exist as fibered categories in a tautological way:
\begin{itemize}
 \item The fiber of $D_c^b(\E)$ over a variety $T$ is $D_c^b(T,\E)$. See \cite[2.4.1]{BernsteinLunts} for a more complete definition.
\item The fiber of $X//G$ over a variety $T$ is the category of $G$-torsors on $T$, equipped with a $G$-equivariant
map to $X$:
$$
\begin{xy}
 \xymatrix{& \wt T \ar[ld] \ar[rd] &  \\
T & & X
}
\end{xy}
$$
In other words $X//G$ is the quotient stack, see \cite[04WL and 0370]{StacksProj} for a complete definition.
\end{itemize}

\begin{defi}
Let $X$ be a $G$-space, where $G$ is a linear algebraic group.
The equivariant derived category $D^b_{G,c}(X,E)$ is defined as the category of cartesian functors from $X//G$ to $D_c^b(\E)$ such that
the diagram
$$
\begin{xy}
 \xymatrix{X//G \ar[rd] \ar@{-->}[rr]& & D_c^b(\E) \ar[ld] \\
   & Var &
}
\end{xy}
$$
strictly commutes. Here $Var$ is the category of varieties.
\end{defi}
The equivariant derived category is triangulated, idempotent complete category. It is equipped with a canonical ``pullback'' or ``forgetful'' functor:
 $$D^b_{G,c}(X,\E) \rar D^b_c(X,\E)$$
This functor is induced by the canonical map $X\rar X//G$ given by action and projection:
$$
\begin{xy}
 \xymatrix{& G\times X \ar[ld]_{\pi} \ar[rd]^{\rho} &  \\
X & & X
}
\end{xy}
$$
For a $G$-stable stratification $X=\bigsqcup X_\l$ one defines $D^b_{G,\Lambda}(X,\E)$ to be the preimage
of $D^b_\Lambda(X,\E)$ under this functor.

\begin{rem}\label{RemObjectInDbG}
More explicitly an object $\cal F \in D^b_{G,c}(X,\E)$ consists of the following data:

\begin{itemize}
 \item 

For every $f:T\rar X//G$ consisting of a $G$-torsor $\wt T \rar T$ and an equivariant map $\wt T\rar X$:
$$
\begin{xy}
 \xymatrix{& \wt T \ar[ld]_{f_T} \ar[rd]^{f_X} &  \\
T & & X
}
\end{xy}
$$
an object of $D^b_c(T,\E)$, which we name $f^*(\cal F)$.

\item For every $\alpha:f\Rightarrow g$ consisting of a map of $G$-torsor making the diagram 
$$
\begin{xy}
 \xymatrix{
& \wt T \ar[rd] \ar[ld] \ar[dd]^\alpha & \\
T  & & X \\
& \wt T' \ar[ru] \ar[lu] & \\
}
\end{xy}
$$
commutative an isomorphism $\alpha^*:f^* \cal F \rar g^* \cal F$.
\item For every map $g:S\rar T$ and $f:T\rar X//G$ an isomorphism
$$g^*(f^* \cal F)\cong (fg)^* \cal F$$
\end{itemize}
These data satisfy the natural coherence conditions.
\end{rem}

\subsection{Acyclic maps}
Acyclic maps are the relative version of acyclic spaces. If two spaces are connected by an acyclic map, their 
derived categories essentially contain the same information. This will be true, even if one of the spaces is $X//G$,
which explains our interest in the notion.
\begin{defi}
 Let $f:X\rar Y$ be a map of varieties. 
\begin{itemize}
\item We say that $f$ is pre-$n$-acyclic (with respect to $\mathbb E$)
if for every constructible sheaf\footnote{object in the heart of the naive $t$-structure.} of $\E$-modules $\cal F \in Sh_c(Y,\E)$ the map $\cal F \rar \tau^{\leq n} f_* f^* \cal F$ is an isomorphism.

 \item We say that $f$ is $n$-acyclic, if for every cartesian diagram 
$$
\begin{xy}
 \xymatrix{X' \ar[d] \ar[r]^{f'} & Y' \ar[d] \\
X \ar[r]^f & Y
}
\end{xy}
$$
the map $f'$ is pre-$n$-acyclic.
\item We say that $f$ is acyclic, if it is $n$-acyclic for every $n$.
\end{itemize}
\end{defi}
The property of being $n$-acyclic is local in the smooth topology and stable under
the usual operations:
\begin{prop}\label{PropAcyc}
Let 
$$
\begin{xy}
 \xymatrix{X' \ar[d]^\pi \ar[r]^{f'} & Y' \ar[d]^{\pi} \\
X \ar[r]^{f} & Y
}
\end{xy}
$$
be a cartesian diagram. 
\begin{enumerate}
\item If $f$ is $n$-acyclic, then $f'$ is $n$-acyclic as well. 
\item Assume that $\pi$ is smooth and surjective. 
If $f'$ is (pre-)$n$-acyclic, then $f$ is (pre-)$n$-acyclic as well.
\item Let $g:Y\rar Z$ be another map. If $f$ and $g$ are both $n$-acyclic, then $g\circ f$ is $n$-acyclic as well.
\item Let $g:B\rar A$ be another map. If $f$ and $g$ are both $n$-acyclic, then $g\times f$ is $n$-acyclic as well.
\end{enumerate}

\begin{proof}
\begin{enumerate}
 \item Immediate from the definition.
\item Let $\cal F \in Sh_c(X,\E)$. By surjectivity it suffices to check that 
$$\pi^* \cal F \rar \pi^* \tau^{\leq n} f_* f^* \cal F $$
is an isomorphism.
Using smooth basechange we compute:

\begin{align*}
 \pi^* \tau^{\leq n} f_* f^* \cal F &= \tau^{\leq n} \pi^*  f_* f^* \cal F \\
&= \tau^{\leq n} f'_* \pi^*   f^* \cal F \\
&= \tau^{\leq n} f'_*   f'^* \pi^* \cal F \\
&=\pi^* \cal F
\end{align*}

\item Let $\cal G \in Sh_c(X,\E)$. Applying $\tau^{\leq n} g_*$ to the triangle
$$\tau^{\leq n} f_* \cal G \rar f_* \cal G \rar \tau^{>n} f_* \cal G \rar $$ 
gives a triangle 
$$\tau^{\leq n} g_*\tau^{\leq n} f_* \cal G \rar \tau^{\leq n} g_*f_* \cal F \rar \tau^{\leq n} g_*\tau^{>n} f_* \cal G \rar $$ 
Now since $g_*$ is left exact, we have $\tau^{\leq n} g_*\tau^{>n} f_* \cal G=0$. Therefore the first map is an isomorphism 
$$ \tau^{\leq n} g_*  \tau^{\leq n} f_* \cal G=\tau^{\leq n}(g \circ f)_* \cal G$$
Using this isomorphism we can now prove the assertion:
\begin{align*}
 \tau^{\leq n} (g\circ f)_* (g\circ f)^* \cal F &= \tau^{\leq n} g_* \tau^{\leq n} f_* f^* g^* \cal F \\
&= \tau^{\leq n} g_* g^* \cal F \\
&= \cal F
\end{align*}

\item We have $(g\times f)=(g\times id) \circ (id \times f)$.
Since we already know that n-acyclicity is stable under basechange and 
composition, we are done.
\end{enumerate}
\end{proof}
\end{prop}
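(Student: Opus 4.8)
The plan is to prove the four assertions in the order given, bootstrapping the later ones from the earlier ones, so that the only genuine work lies in items (2) and (3).

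For (1) I would argue purely diagrammatically. Any cartesian square sitting over $f'$ can be pasted with the given cartesian square to produce a cartesian square sitting over $f$; since a pasting of cartesian squares is cartesian, every base change of $f'$ is itself a base change of $f$. Hence if all base changes of $f$ are pre-$n$-acyclic, the same holds for all base changes of $f'$, which is exactly $n$-acyclicity of $f'$. No sheaf-theoretic input is needed here.

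For (2) the engine is smooth base change. Fix $\cal F\in Sh_c(X,\E)$; I want the map $\cal F\rar \tau^{\leq n}f_*f^*\cal F$ to be an isomorphism. Since $\pi$ is smooth and surjective it is faithfully flat, so $\pi^*$ is exact for the naive $t$-structure (hence commutes with $\tau^{\leq n}$) and conservative; thus it suffices to check the statement after applying $\pi^*$. Using the smooth base change isomorphism $\pi^*f_*\cong f'_*\pi^*$ for the given square, together with $t$-exactness of $\pi^*$, one rewrites $\pi^*\tau^{\leq n}f_*f^*\cal F$ as $\tau^{\leq n}f'_*f'^*\pi^*\cal F$, which equals $\pi^*\cal F$ by (pre-)$n$-acyclicity of $f'$ applied to the sheaf $\pi^*\cal F$. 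This handles the ``pre'' version; for the unqualified version one observes that base changing the square along any $Y''\rar Y$ pulls $\pi$ back to a smooth surjection, so the same argument runs over every base locus.

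For (3) the key lemma is the identity $\tau^{\leq n}(g\circ f)_*\cong \tau^{\leq n}g_*\,\tau^{\leq n}f_*$ on constructible sheaves: apply $\tau^{\leq n}g_*$ to the truncation triangle $\tau^{\leq n}f_*\cal G\rar f_*\cal G\rar \tau^{>n}f_*\cal G\rar$ and use left $t$-exactness of $g_*$, which forces $\tau^{\leq n}g_*\tau^{>n}f_*\cal G=0$ and makes the first arrow an isomorphism after $\tau^{\leq n}$. Granting this, $\tau^{\leq n}(g\circ f)_*(g\circ f)^*\cal F=\tau^{\leq n}g_*\tau^{\leq n}f_*f^*g^*\cal F=\tau^{\leq n}g_*g^*\cal F=\cal F$ by $n$-acyclicity of $f$ and then of $g$; and since a base change of $g\circ f$ is the composite of base changes of $g$ and of $f$, the argument survives base change. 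Finally for (4) I would factor $g\times f=(g\times \mathrm{id})\circ(\mathrm{id}\times f)$, note that each factor is a base change of $g$ (resp.\ $f$) along a projection and hence $n$-acyclic by (1), and invoke (3). The main obstacle I anticipate is (2): one must be sure that, with $\E$-coefficients in the modular/$l$-adic setting used here, $\pi^*$ really is $t$-exact and conservative for smooth surjective $\pi$ and that smooth base change is available and compatible with truncation; the rest is diagram pasting and the standard bookkeeping of $\tau^{\leq n}$ against left-exact pushforwards.
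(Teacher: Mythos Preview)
Your proposal is correct and follows the paper's proof essentially line for line: the same diagram-pasting for (1), the same smooth base change computation for (2), the same truncation-triangle identity $\tau^{\leq n}(g\circ f)_*\cong\tau^{\leq n}g_*\tau^{\leq n}f_*$ for (3), and the same factorization $g\times f=(g\times\mathrm{id})\circ(\mathrm{id}\times f)$ for (4). You are in fact slightly more careful than the paper in spelling out why the ``pre'' version in (2) upgrades to full $n$-acyclicity and why (3) survives base change; the concern you flag about $t$-exactness and conservativity of smooth surjective $\pi^*$ is standard in the $l$-adic setting and the paper takes it for granted.
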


Let us give a useful criterion for checking that maps are $n$-acyclic:

A fiber bundle $f:X\rar Y$ with fiber $F$ is a map of varieties, such that there exists a $fpcq$-morphism 
$Y'\rar Y$ fitting into a cartesian square 
$$
\begin{xy}
 \xymatrix{F \times Y' \ar[d]_{(f,y')\mapsto y'} \ar[r] & X \ar[d]\\
Y' \ar[r] & Y
}
\end{xy}
$$
We say that a fiber bundle is trivial, if one can choose $Y'\rar Y$ to be the identity. We say that a fiber bundle 
is locally trivial with respect to a topology $\tau$, if there exists a $\tau$-cover of $Y$, such that the bundle 
becomes trivial, when pulled back to the constituents of the cover.

\begin{cor}\label{CorAcycBaseFib}
 Let $f:X\rar Y$ be a fiber bundle with fiber $F$, locally trivial in the smooth topology. 
Suppose that $F$ is pre-$n$-acyclic and $H^{n+1}(F,\E)$ is torsion free. Then $f$ is $n$-acyclic. 
\begin{proof}
 Let $Y\rar X$ be a fiber bundle, with pre-$n$-acyclic fiber $F$. Given a commutative diagram 
$$ 
\begin{xy}
 \xymatrix{
X' \ar[d] \ar[r] & X \ar[d] \\
Y' \ar[r] & Y
}
\end{xy}
$$
the map $X'\rar Y'$ is a fiber bundle with fiber $F$ as well. 
Hence it suffices to show that $X\rar Y$ is pre-$n$-acyclic. By \ref{PropAcyc} this can be checked smooth locally. Hence we may assume 
that the map $f$ is actually the projection $(\pi\times id_X):F\times X \rar X$, where $\pi:F\rar pt$. Thus we may compute:

\begin{align*}
 \tau^{\leq n} f_* f^* \cal F &= \tau^{\leq n} ((\pi\times id_X)_* (\E \boxtimes \cal F)) \\
&=\tau^{\leq n} (H^\bullet(F,\E) \otimes \cal F) \\
&=\cal F
\end{align*}
Note that the last equality uses the torsion freeness assumption.
\end{proof}
\end{cor}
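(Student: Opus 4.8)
The plan is to unwind the definition of $n$-acyclicity until all that remains is a single K\"unneth computation for a trivial bundle. The class of fiber bundles with fiber $F$ that are locally trivial in the smooth topology is stable under base change (pull back the trivializing cover), so every pullback $f'\colon X'\rar Y'$ of $f$ along a map $Y'\rar Y$ is again such a bundle. Hence, by the very definition of $n$-acyclicity, it suffices to prove the following: every fiber bundle $g\colon Z\rar W$ with fiber $F$, locally trivial in the smooth topology, is \emph{pre}-$n$-acyclic.

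To prove this, I would first invoke \ref{PropAcyc}(2): pre-$n$-acyclicity of $g$ may be tested after pullback along a smooth surjection onto $W$. Pulling back along the smooth cover that trivializes $g$, we reduce to the case where $g$ is the projection $\pi\times \mathrm{id}_W\colon F\times W\rar W$ with $\pi\colon F\rar pt$. For this map and an arbitrary constructible sheaf $\cal F\in Sh_c(W,\E)$ one has $g^*\cal F=\E\boxtimes\cal F$ and, by a K\"unneth computation (smooth base change together with the projection formula), $g_*g^*\cal F=H^\bullet(F,\E)\otimes^{L}\cal F$. The claim is thus reduced to the purely algebraic identity
$$\tau^{\leq n}\bigl(H^\bullet(F,\E)\otimes^{L}\cal F\bigr)=\cal F.$$

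Now the two hypotheses enter. Pre-$n$-acyclicity of $F$, applied to the structure map $F\rar pt$ and the rank-one free module $\E$, gives exactly $H^0(F,\E)=\E$ and $H^i(F,\E)=0$ for $1\leq i\leq n$. Writing $H^\bullet(F,\E)=\bigoplus_i H^i(F,\E)[-i]$, the cohomology of $H^i(F,\E)[-i]\otimes^{L}\cal F$ in degree $k$ is $\mathrm{Tor}^{\E}_{i-k}\bigl(H^i(F,\E),\cal F\bigr)$. For $k\leq n$ the only possibly nonzero contributions come from $i=0$, which yields $\cal F$ in degree $0$, or from $i\geq n+1$, which yields $\mathrm{Tor}^{\E}_{i-k}$ with $i-k\geq 1$ — and with $i-k\geq 2$ unless $(i,k)=(n+1,n)$. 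Since $\E$ is either a field ($\F$ or $\K$) or a discrete valuation ring ($\O$), it has global dimension at most one, so all higher $\mathrm{Tor}$ vanish; the sole surviving term is $\mathrm{Tor}^{\E}_1\bigl(H^{n+1}(F,\E),\cal F\bigr)$, which is zero because $H^{n+1}(F,\E)$ is torsion free, hence flat over $\E$. This establishes the displayed identity, and with it the corollary.

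The point requiring care — and the only one — is this last bookkeeping: one must see that torsion freeness of the \emph{single} cohomology group $H^{n+1}(F,\E)$, with no hypothesis whatsoever on $H^{>n+1}(F,\E)$, already forces the truncation to collapse, and this rests entirely on the coefficient rings having global dimension $\leq 1$. Everything else — stability of the bundle class under base change, the smooth-local test, and the K\"unneth formula — is formal, given \ref{PropAcyc} and the six-functor formalism recalled above.
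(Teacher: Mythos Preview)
Your argument is correct and follows essentially the same route as the paper's proof: reduce to pre-$n$-acyclicity via stability of the bundle class under base change, localize smoothly to the trivial bundle via \ref{PropAcyc}, and finish with the K\"unneth identity $\tau^{\leq n}(H^\bullet(F,\E)\otimes^L\cal F)=\cal F$. The only difference is that you spell out the Tor bookkeeping behind that last identity (using that $\E$ has global dimension $\leq 1$ and that $H^{n+1}(F,\E)$ is flat), whereas the paper simply records that the torsion-freeness hypothesis is used there.
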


\subsubsection{Application to equivariant derived categories}
Acyclic maps help us to compute the equivariant derived category. In order to be more precise, we 
introduce some notation.

Let $X$ be a $G$-space and $f:T\rar X//G$ be a map given by
$f_T:\wt T \rar T$ a $G$-torsor and $f_X:\wt T \rar X$ equivariant.
Denote by $D^b_c(X,\wt T,\E)$ the category whose objects consist of triples 
$$(\cal F_X, \eta_{\cal F}, \cal F_T)$$
where $\cal F_X$ is an object of $D^b_c(X,\E)$, $\cal F_T$ is an object of $D^b_c(T,\E)$ and $\eta_{\cal F}: f_T^* \cal F_T \cong f_X^* \cal F_X$ is an 
isomorphism.
Morphisms in $D^b_c(X,\wt T,\E)$ are pairs of maps $\psi_T:\cal F_T \rar \cal G_T$ and $\psi_X:\cal F_X \rar \cal G_X$
making the obvious diagram commute.

Let $g:S\rar X$ be a second map and $\alpha:f \Rightarrow g$ be a ``transformation'' consisting of a map of torsors 
$\wt \alpha: \wt T \rar \wt S$ making the diagram 
$$
\begin{xy}
 \xymatrix{T \ar[d]^{\alpha} & \ar[l] \wt T  \ar[d]^{\wt \alpha} \ar[rd] & \\
 S & \ar[l] \wt S \ar[r] & X
 }
\end{xy}
$$
commutative. Then there is a canonical pullback functor:
$$\alpha^*: D^b_c(X,\wt S,\E) \rar D^b_c(X,\wt T,\E)$$

%

Let $I=[a,b]:=[a,a+1,\ldots,b]\subset \Z$. Then we denote by $D_c^I(X,\E)$ the full subcategory 
of $D^b_c(X,\E)$ consisting of those objects whose cohomology sheaves vanish away from $I$:
$$D_c^I(X,\E):=\{\cal F \in D_c^b(X,\E)|(H^i(\cal F) \neq 0)\Rightarrow i \in I \}$$
We also use variants of this notations like $D^I_{G,c}(X,\E)$ and $D^I_c(X,\wt T,\E)$ 
referring to objects whose cohomology sheaves vanish outside of $I$ on any involved space.

\begin{prop}\label{propComputationOfEquivDer}
Let $I=[a,b]$ with $0 \leq n:=b-a$. Suppose that we are given $f:T\rar X//G$ such that $f_X$ is $n$-acyclic.
\begin{enumerate}
 \item Then there is a canonical pullback functor $D^b_{G,c}(X,\E) \rar D^b_c(X,\wt T,\E)$, which restricts to an equivalence 
$$D^I_{G,c}(X,\E) \rar D^I_c(X,\wt T,\E)$$
\item Let $g:S\rar X//G$ be another map and $\alpha:f\Rightarrow g$ be a transformation.
Then the diagram
$$
\begin{xy}
 \xymatrix{ & \ar[ld] D^b_{G,c}(X,\E) \ar[rd] & \\
 D^b_c(X,\wt T,\E) && \ar[ll]^{\alpha^*} D^b_c(X,\wt S,\E)
 }
\end{xy}
$$
commutes.
\end{enumerate}
\begin{proof}
\begin{enumerate}
 \item This is a reformulation of \cite[2.4.3.]{BernsteinLunts}. It relies on the torsor-formalism, 
especially \ref{propIterQuot}.
\item Straightforward.
\end{enumerate}
\end{proof}
\end{prop}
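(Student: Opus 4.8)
The plan is to reduce everything to the case where the torsors $\wt T\rar T$ and $\wt S\rar S$ are trivial, since in that situation the category $D^b_c(X,\wt T,\E)$ becomes transparent and the functors can be written down by hand. First I would recall from \ref{propComputationOfEquivDer} part (1) that the pullback functor $D^b_{G,c}(X,\E)\rar D^b_c(X,\wt T,\E)$ exists: given $\cal F\in D^b_{G,c}(X,\E)$, one evaluates $\cal F$ on the object $f:T\rar X/\!/G$ of the quotient stack to obtain $f^*\cal F\in D^b_c(T,\E)$, one evaluates on the canonical map $X\rar X/\!/G$ (action and projection) to obtain $\cal F_X\in D^b_c(X,\E)$, and the gluing isomorphism $\eta_{\cal F}$ comes from the $2$-morphism in $X/\!/G$ connecting $f_X:\wt T\rar X$ (viewed as a map to $X/\!/G$ via the trivial torsor $G\times\wt T\rar \wt T$) to both $f$ and the tautological map; the coherence data in Remark \ref{RemObjectInDbG} furnishes $\eta_{\cal F}$ and its compatibilities. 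The functor $\alpha^*:D^b_c(X,\wt S,\E)\rar D^b_c(X,\wt T,\E)$ was defined just above the proposition: it sends $(\cal G_X,\eta_{\cal G},\cal G_T)$ to $(\cal G_X,\wt\alpha^*\eta_{\cal G},\alpha^*\cal G_T)$, using that $\wt\alpha:\wt T\rar\wt S$ sits over $\alpha:T\rar S$ and is compatible with the maps to $X$.

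The commutativity to be checked is then, for $\cal F\in D^b_{G,c}(X,\E)$, a natural isomorphism between the image of $\cal F$ under "evaluate at $g$, then apply $\alpha^*$" and its image under "evaluate at $f$" — that is, an isomorphism of triples in $D^b_c(X,\wt T,\E)$. On the $X$-component both triples have the same entry $\cal F_X$ (the evaluation at the tautological map $X\rar X/\!/G$ does not see $f$ or $g$ at all). On the $T$-component we must identify $\alpha^*(g^*\cal F)$ with $f^*\cal F$; but $f=g\circ\alpha$ as maps (morphisms into the stack $X/\!/G$), so this is precisely the third coherence isomorphism $\alpha^*(g^*\cal F)\cong (g\alpha)^*\cal F=f^*\cal F$ listed in Remark \ref{RemObjectInDbG}. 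One then has to check that under this identification the two gluing isomorphisms $\eta$ agree; this is a diagram chase combining the second coherence datum ($2$-morphisms act on evaluations) with the functoriality of the first, applied to the evident $2$-morphism in $X/\!/G$ relating the trivial-torsor maps over $\wt T$ and over $\wt S$ to $f$, $g$ and the tautological map.

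The only slightly delicate point is bookkeeping the various $2$-cells in the quotient stack $X/\!/G$ and making sure the coherence pentagon/triangle identities of the cartesian functor $\cal F$ are invoked in the right order; no genuinely new idea is needed, which is why it is legitimate to record this as "straightforward". I would organize the verification as: (i) pin down the three structure isomorphisms of $\cal F$ from Remark \ref{RemObjectInDbG}; (ii) write the two composite triples explicitly; (iii) match $X$-components trivially and $T$-components via $f=g\circ\alpha$; (iv) match the gluing data by the coherence conditions. The main obstacle, such as it is, will be purely notational: keeping track of which torsor and which equivariant map is being used at each node, since the same variety $\wt T$ appears both as the total space of $f_T$ and, equipped with the trivial $G$-torsor, as the source of a map to $X/\!/G$. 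Once the notation is fixed the identities are forced.
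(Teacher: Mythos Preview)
Your treatment of part (2) is correct and is precisely what the paper's ``Straightforward'' unpacks to: the commutativity follows from the coherence data carried by a cartesian functor $\cal F:X/\!/G\rar D^b_c(\E)$, applied to the factorization $f=g\circ\alpha$ in the stack. Your organization --- match the $X$-components trivially, match the $T$-components via the structural isomorphism $\alpha^*(g^*\cal F)\cong(g\alpha)^*\cal F$, then verify the gluing data using the remaining coherence axioms of Remark \ref{RemObjectInDbG} --- is the right way to spell it out.

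There is, however, a genuine gap on part (1). You construct the pullback functor but never argue that it restricts to an \emph{equivalence} $D^I_{G,c}(X,\E)\rar D^I_c(X,\wt T,\E)$; in particular you never use the hypothesis that $f_X$ is $n$-acyclic, which is the whole point. The paper defers this to \cite[2.4.3]{BernsteinLunts}: full faithfulness comes from $n$-acyclicity (morphisms in the truncated range are recovered from their pullback along $f_X$), while essential surjectivity requires manufacturing, from a single triple $(\cal F_X,\eta,\cal F_T)$, a coherent system of evaluations on \emph{every} object of $X/\!/G$, and this is where the torsor formalism and specifically \ref{propIterQuot} enter. Your opening plan to ``reduce to trivial torsors'' gestures at one ingredient of such an argument, but it is never carried out and would not by itself yield the equivalence. (A minor point: citing ``\ref{propComputationOfEquivDer} part (1)'' for the existence of the functor is circular --- you are proving that statement --- though the construction you give afterwards stands on its own.)
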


\section{Approximations}
In the light of \ref{RemObjectInDbG} the category $D^b_{G,c}(X,\E)$ is not very transparent. In order to 
describe it more explicitly, we will need approximations.

Given a stratified space $(X,\Lambda)$ and a say smooth $n$-acyclic surjection $\pi:Y\rar X$,
$D^I_\Lambda(X,\E)$ and $D^I_\Lambda(Y,\E)$ are equivalent via $f^*$ and $\tau^{\leq n} f_*$. 
Here $I=[a,b]$ and $n=b-a$. Hence if we find a sequence of maps 
$$
\begin{xy}
\xymatrix{
 & & X  \\
X_0 \ar[r] \ar[urr] & X_1 \ar[r] \ar[ur] & X_2 \ar[r]  \ar[u] &\ldots \\
}
\end{xy}
$$
such that $X_n\rar X$ is smooth and $n$-acyclic, we should be able to compute $D^b_\Lambda(X)$ by the formula:
$$D^b_\Lambda(X,\E)= \varprojlim D^b_\Lambda(X_n,\E)$$
Here the transition functors for the limit are given by pullback along the horizontals $X_i\rar X_{i+1}$.
Now it is of course much easier to compute $D^b_\Lambda(X,\E)$ directly. However this method has the advantage, 
that it also works if we replace $X$ by a more general space, for example $X//G$. Recall that a map $T\rar X//G$ 
is given by a roof $T \lar \wt T \rar X$. This suggests that a suitable diagram
$$
\begin{xy}
\xymatrix{
 & & X  \\
X_0 \ar[r] \ar[urr] \ar[d] & X_1 \ar[r] \ar[d] \ar[ur] & X_2 \ar[r] \ar[d] \ar[u] &\ldots \\
\ol X_0 \ar[r] &  \ol X_1 \ar[r] & \ol X_2 \ar[r] & \ldots \\
}
\end{xy}
$$
allows one to compute the equivariant derived category via:
$$D^b_{G,\Lambda}(X,\E)= \varprojlim D^b_\Lambda(\ol X_n,\E)$$

This formula holds under quite weak assumptions on the diagram above,
see \cite[Section 5.3.]{OlafThesis}.
However we will put additional conditions into our notion of approximation, which make it easier to control $\varprojlim D^b_\Lambda(X_n,\E)$.

\subsection{Principal bundles and balanced products}
Let $G$ be a linear algebraic group. Let us fix some terminology:
 \begin{itemize}
\item Let $\pi:Y \rar B$ be a map of varieties. We say that $\pi$ is a locally trivial bundle with fiber $F$,
if there exists a covering by Zariski open
\footnote{This is a quite strong condition, for example $G\rightarrow G/H$ is not necessarily Zariski locally trivial.}
 subsets $B=\bigcup U_i$ such that there 
are commutative diagrams 
$$
\begin{xy}
 \xymatrix{\pi^{-1}(U_i) \ar[rd]^\pi \ar[r]^{\cong} & F \times U_i \ar[d]^{\pi_{U_i}} \\
 & U_i}
\end{xy}
$$
whose horizontal is an isomorphism.
\item Similarly by a principal $G$-bundle, we mean a principal $G$-bundle which is locally trivial in the Zariski topology. 
In other words a principal $G$-bundle is a Zariski locally trivial $G$-torsor.
\item Let $X$ and $Y$ two $G$-varieties. 
We define the balanced product by the formula 
$$X \underset{G}{\otimes}Y:=(X\times Y)/G$$
if this quotient exists. Here $G$ acts diagonally.
 \end{itemize}
First we need a criterion for balanced products to exist:
\begin{prop}\label{PropBalancedProdExists}
Let $E\rar B$ be a principal $G$-bundle. Let $X$ be a variety with a $G$-action. 
Then the balanced product
$$E\underset{G}{\otimes} X$$
 exists as a variety. The map $B \lar E\underset{G}{\otimes} X$ is a locally trivial bundle with fiber $X$.
Furthermore
\begin{itemize}
\item If $B$ and $X$ are both acyclic, then $E\underset{G}{\otimes} X$ is acyclic as well.
\item If $B$ and $X$ are both affine, then $E\underset{G}{\otimes} X$ is affine as well.
\item If $B$ and $X$ are both smooth, then is $E\underset{G}{\otimes} X$ smooth as well.
\end{itemize}

\begin{proof}
 If $E$ is trivial, we have $E\underset{G}{\otimes} X=B\times X$, which is certainly a variety. Hence the quotient locally exists. 
We can glue it together and obtain a prevariety $E\underset{G}{\otimes} X$. It remains to check that $E\underset{G}{\otimes} X$ is separated.
Since $B$ is separated, we only need to check that the map $B \lar E\underset{G}{\otimes} X$ is separated. But this can be checked locally on $B$, see 
\cite[II:4.6]{Hartshorne}.

Checking that various properties are preserved, is analogous using
appropriate relative notions \cite[III.10.1]{Hartshorne} , \cite[II.5.17]{Hartshorne}
and \ref{PropAcyc}.
\end{proof}
\end{prop}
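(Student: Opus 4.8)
The plan is to reduce everything to the trivial case $E = G \times B$, where $E \underset{G}{\otimes} X = B \times X$, and then glue. First I would fix a Zariski-open cover $B = \bigcup_i U_i$ over which the principal bundle $E \to B$ trivializes, say $E|_{U_i} \cong G \times U_i$. Over each $U_i$ the balanced product $(E|_{U_i}) \underset{G}{\otimes} X$ exists and equals $U_i \times X$ by the computation with the trivial torsor; on overlaps $U_i \cap U_j$ the two local models are canonically identified via the transition cocycle of $E$ (an element of $G(U_i \cap U_j)$ acting diagonally), and these identifications satisfy the cocycle condition inherited from $E$. Gluing the $U_i \times X$ along these isomorphisms produces a prevariety $E \underset{G}{\otimes} X$, together with a map $B \leftarrow E \underset{G}{\otimes} X$ which by construction restricts to the projection $U_i \times X \to U_i$ over each chart; in particular it is a Zariski-locally-trivial bundle with fiber $X$. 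The existence of the global quotient as a prevariety is then immediate, and the universal property identifying it with $(E \times X)/G$ follows because the quotient condition can be checked $fpqc$-locally, as in \ref{LemBaseTorsorIsQuot} and \ref{lemCartesianFpqcLocal}.

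The one genuine point is separatedness: a variety glued from affine-over-base pieces need not be separated. Here I would argue exactly as indicated in the sketch: since $B$ is separated, it suffices to check that the structure map $q : E \underset{G}{\otimes} X \to B$ is separated, and separatedness of a morphism is Zariski-local on the target \cite[II:4.6]{Hartshorne}. Over each $U_i$ the map $q$ is the projection $U_i \times X \to U_i$, which is separated because $X$ is (a variety). Hence $q$ is separated, and therefore $E \underset{G}{\otimes} X$ is a variety.

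For the three listed properties I would use the corresponding relative notions and the fact that each can be checked Zariski-locally on $B$. If $X$ is affine, then $q$ is an affine morphism (locally it is $U_i \times X \to U_i$ with $X$ affine), so if $B$ is moreover affine, $E \underset{G}{\otimes} X$ is affine \cite[II.5.17]{Hartshorne}. If $X$ is smooth, then $q$ is smooth (locally a projection with smooth fiber), so if $B$ is smooth then so is the total space \cite[III.10.1]{Hartshorne}. Finally, if $B$ and $X$ are acyclic, I would invoke \ref{PropAcyc}: the bundle $q$ is Zariski-locally trivial hence in particular a fiber bundle locally trivial in the smooth topology with acyclic fiber $X$, so $q$ is $\infty$-acyclic (using that the cohomology of an acyclic space is torsion free, as needed for \ref{CorAcycBaseFib}); composing with $B \to pt$ and applying part (3) of \ref{PropAcyc} shows $E \underset{G}{\otimes} X \to pt$ is $\infty$-acyclic, i.e.\ $E \underset{G}{\otimes} X$ is acyclic.

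I do not expect any of this to be hard; the only step requiring care is the separatedness check, which is precisely why the trivial-bundle reduction plus the locality of separatedness on the base is the crux of the argument.
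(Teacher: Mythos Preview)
Your proposal is correct and follows essentially the same route as the paper's proof: reduce to the trivial case $E\underset{G}{\otimes}X = B\times X$, glue along a trivializing Zariski cover to obtain a prevariety, verify separatedness via locality of separatedness on the base \cite[II:4.6]{Hartshorne}, and then handle the three properties by passing to the relative versions (affine, smooth, acyclic morphisms) which can be checked locally on $B$. Your explicit unwinding of the acyclic case via \ref{CorAcycBaseFib} and part (3) of \ref{PropAcyc} is exactly what the paper's laconic reference to \ref{PropAcyc} is gesturing at.
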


\begin{lem}\label{lemStratOnBalancedProd}
 Let $\pi:E\rar B$ be a principal $G$-bundle over a stratified variety $(B,\Sigma)$.
Let $(X,\Lambda)$ be a stratified variety with compatible $G$-action.
\begin{enumerate}
 \item Then $E\underset{G}{\otimes} X$ is again stratified with strata $E_\sigma \underset{G}{\otimes} X_\l:=\pi^{-1}(B_\sigma) \underset{G}{\otimes} X_\l $:
$$E\underset{G}{\otimes} X=\bigsqcup_{(\sigma,\l) \in \Sigma \times \Lambda} E_\sigma \underset{G}{\otimes} X_\l$$
\item If the stratifications on $B$ and $X$ are both acyclic, then the stratification on $E\underset{G}{\otimes} X$
is acyclic as well.
\end{enumerate}

\begin{proof}
\begin{enumerate}
\item First of all our candidate strata $E_\sigma \underset{G}{\otimes} X_\l$ are smooth by \ref{PropBalancedProdExists}.

The stratification on $B$ induces a stratification on $E$ 
and hence we get a stratification on $E\times X$ with strata $E_\sigma \times X_\l$.

We need to show, that say $l^*_{\sigma',\l'} l_{\sigma,\l *}\E$ is constant for 
inclusions 
$$E_{\sigma'} \underset{G}{\otimes} X_{\l'} \inj E\underset{G}{\otimes} X \linj E_\sigma \underset{G}{\otimes} X_\l$$
Now the property of being a local system can be tested smooth locally (see  \ref{lemParityCritPullback}).
Hence exploiting the diagram of torsors
$$
\begin{xy}
 \xymatrix{ E_{\sigma'} \times X_{\l'} \ar[d] \ar[r] & E\times X  \ar[d] & \ar[l] E_\sigma \times X_\l \ar[d] \\
E_{\sigma'} \underset{G}{\otimes} X_{\l'}  \ar[r] & E\underset{G}{\otimes} X  & \ar[l] E_\sigma \underset{G}{\otimes} X_\l
}
\end{xy}
$$
we see that $l^*_{\sigma',\l'} l_{\sigma,\l *}\E$ is a local system. 

We need to find an open dense subset of $E_{\sigma'} \times X_{\l'}$ on which $l^*_{\sigma',\l'} l_{\sigma,\l *}\E$ is trivial.
We may assume that $E_{\sigma'} \times X_{\l'}$ lies in the closure of $E_\sigma \times X_\l$, otherwise $l^*_{\sigma',\l'} l_{\sigma, \l *}\E=0$ anyway.
Since our bundles are Zariski locally trivial, there exists an open dense subset $U \subset B$ over which $E$ is trivial and such that $U$ intersects 
$B_{\sigma'}$ and hence also $B_{\sigma}$. 
It is not hard to see that $l^*_{\sigma',\l'} l_{\sigma,\l *}\E$ is constant on the preimage of $U$.

\item \ref{PropBalancedProdExists}
\end{enumerate}
\end{proof}
\end{lem}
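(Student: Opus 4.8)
The plan is to dispatch part (2) immediately and then reduce the stratification condition in part (1) to the case of a \emph{trivial} bundle, where it becomes a K\"unneth computation. Part (2) is a direct consequence of \ref{PropBalancedProdExists} applied one stratum at a time: the restriction $E_\sigma:=\pi^{-1}(B_\sigma)\rar B_\sigma$ is again a principal $G$-bundle, $B_\sigma$ is acyclic by hypothesis and $X_\l$ is acyclic, so the acyclicity assertion in \ref{PropBalancedProdExists} says that the stratum $E_\sigma\underset{G}{\otimes}X_\l$ of $E\underset{G}{\otimes}X$ is acyclic. The elementary half of (1) is handled the same way: pulling $\Sigma$ back along the smooth surjection $\pi$ gives a $G$-stable decomposition $E=\bigsqcup_\sigma E_\sigma$ with each $E_\sigma\rar B_\sigma$ a principal $G$-bundle, crossing with the $X_\l$ yields the $G$-stable decomposition $E\times X=\bigsqcup_{\sigma,\l}E_\sigma\times X_\l$ into smooth affine irreducible pieces (irreducibility using connectedness of $G$), and \ref{PropBalancedProdExists} shows each $E_\sigma\underset{G}{\otimes}X_\l=(E_\sigma\times X_\l)/G$ exists as a smooth affine variety and sits inside $E\underset{G}{\otimes}X$ as a locally closed subvariety; these pieces clearly partition $E\underset{G}{\otimes}X$.

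The real content is that $l^*_{\sigma',\l'}l_{\sigma,\l*}\E$ has constant cohomology sheaves for all $(\sigma,\l),(\sigma',\l')$, and I would prove this in two stages. First, that it is a \emph{local system}: being a local system is a smooth-local property (\ref{lemParityCritPullback}), so pull back along the smooth surjective $G$-torsor $E\times X\rar E\underset{G}{\otimes}X$. A map of torsors has cartesian defining squares, so the stratum inclusions pull back to $E_{\sigma'}\times X_{\l'}\inj E\times X\linj E_\sigma\times X_\l$, and smooth base change together with K\"unneth identify the pullback of $l^*_{\sigma',\l'}l_{\sigma,\l*}\E$ with $(l^*_{E_{\sigma'}}l_{E_\sigma*}\E)\boxtimes(l^*_{X_{\l'}}l_{X_\l*}\E)$. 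Both boxed factors are even \emph{constant} --- the first because $E$ is stratified (a smooth pullback of the stratified $B$, via smooth base change), the second because $X$ is stratified --- so the pullback of $l^*_{\sigma',\l'}l_{\sigma,\l*}\E$ to the cover $E_{\sigma'}\times X_{\l'}$ is constant; in particular $l^*_{\sigma',\l'}l_{\sigma,\l*}\E$ is a local system on $E_{\sigma'}\underset{G}{\otimes}X_{\l'}$.

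The second stage --- upgrading ``local system whose pullback to the cover is constant'' to ``constant'' --- is the step I expect to be the main obstacle: smooth base change yields only local constancy, and over $\ol\F_q$ a stratum like a Bruhat cell is very far from \'etale-simply-connected, so constancy is not for free. The resolution uses that $G$ is connected: then the $G$-torsor $E_{\sigma'}\times X_{\l'}\rar E_{\sigma'}\underset{G}{\otimes}X_{\l'}$ induces a surjection $\pi_1(E_{\sigma'}\times X_{\l'})\twoheadrightarrow\pi_1(E_{\sigma'}\underset{G}{\otimes}X_{\l'})$ on \'etale fundamental groups, hence the pullback functor on local systems --- restriction along this surjection --- is fully faithful, and therefore the constancy of the pullback of $l^*_{\sigma',\l'}l_{\sigma,\l*}\E$ descends to constancy of $l^*_{\sigma',\l'}l_{\sigma,\l*}\E$ itself. (An equivalent, more hands-on variant: since $E\rar B$ is Zariski- and not merely $fpqc$-locally trivial --- which is also what made the balanced products exist in \ref{PropBalancedProdExists} --- one may trivialize $E$ over a Zariski cover $B=\bigcup U_i$, over which $E\underset{G}{\otimes}X$ becomes the honest product $U_i\times X$ and K\"unneth applies directly on the base; connectedness of $G$ is what glues the local pictures.) Everything else is routine bookkeeping with the torsor formalism, smooth base change and K\"unneth.
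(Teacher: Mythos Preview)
Your proof is correct and overlaps with the paper's in its overall architecture: both dispatch part~(2) via \ref{PropBalancedProdExists}, and both establish that $l^*_{\sigma',\l'} l_{\sigma,\l*}\E$ is a local system by pulling back along the smooth torsor $E\times X\rar E\underset{G}{\otimes}X$ and invoking \ref{lemParityCritPullback} together with K\"unneth.

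The genuine difference is in the \emph{constancy} step. You descend constancy from $E_{\sigma'}\times X_{\l'}$ using that a $G$-torsor with connected $G$ induces a surjection on \'etale $\pi_1$, so a local system whose pullback is constant is itself constant. The paper instead uses Zariski local triviality directly on the base: choose an open $U\subset B$ over which $E$ trivialises and which meets $B_{\sigma'}$; over $U$ the balanced product is literally $U\times X$, K\"unneth gives constancy there, and a local system on a smooth irreducible variety that is constant on a dense open is constant (because $\pi_1$ of the open surjects onto $\pi_1$ of the whole). Your parenthetical ``hands-on variant'' is essentially this argument, though your remark that ``connectedness of $G$ is what glues the local pictures'' is not quite right: the paper's route needs only Zariski local triviality of the bundle and irreducibility of the stratum $E_{\sigma'}\underset{G}{\otimes}X_{\l'}$ (a locally trivial $X_{\l'}$-bundle over the irreducible $B_{\sigma'}$), not connectedness of $G$. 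Your $\pi_1$-descent argument is cleaner in that it avoids choosing an open, but it imports the hypothesis that $G$ is connected---harmless in every application in the paper, but not stated in the lemma.
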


\begin{rem}
\begin{itemize}
 \item 

Let $B$ and $X$ be stratified varieties and assume that $X$ comes with a compatible $G$-action.
Let $E\rar B$ be the trivial $G$-bundle
Then we have $E\underset{G}{\otimes} X=B\times X$ as stratified varieties. 

In particular for any principal bundle $E\rar B$ the balanced product $E \underset{G}{\otimes} X$ looks locally over 
$B$ like a product stratification.

\item There are various ways to realize partial flag varieties as bundles over smaller flag varieties, with fibers
other flag varieties. However in this situation the stratification does NOT locally look like a product stratification.
This makes the category of perverse sheaves on the full flag variety very complicated.
\end{itemize}
\end{rem}

\subsection{Normally smooth inclusions}
Let $i:Z\inj X$ be a closed codimension $c$ embedding of varieties. We want to find a condition on $i$, such that the
pullback preserves the $IC$-extension of the constant sheaf on the smooth locus up to appropriate shift:
$$i^* IC_X \cong IC_Z[c]$$
This is for example true if we have an inclusion of a slice $Z\rar Z\times S$ for smooth $S$. More generally, 
it should hold, if $X$ is smooth in the direction perpendicular to $Z$. 

\begin{defi}\label{defiNormallySmooth}
We define the class of normally smooth inclusions (of codimension $\bullet$) to be the smallest system of $\N$-indexed classes of morphisms, 
satisfying the following axioms:
\begin{enumerate}
\item Let $i:Z\rar X$ be a closed immersion between smooth varieties, such that $\dim X - \dim Z =c$. Then $i$ is normally smooth inclusions of codimension $c$.
\item Let $i:Z\rar X$ be an isomorphism. Then $i$ is a normally smooth of codimension zero.
\item Let $i:Z\rar X$ and $i:Z'\rar X'$ be normally smooth of codimensions $c$ and $c'$. Then $i\times i'$ is normally smooth of 
codimension $c+c'$.
\item Let $i:Y\rar Z\rar X$ be a concatenation of two normally smooth inclusions of codimensions $c$ and $c'$. Then $i$ is normally smooth of
codimension $c+c'$.
\item Let $(U_i\rar X)_{i\in I}$ be an open cover of $X$ and $\phi:Z\rar X$ be a map. If all $\phi^{-1}(U_i) \rar U_i$
are normally smooth of codimension $c$, then the same holds for $\phi$.
\item\label{ItemSmoothPull} Let $X'\rar X$ be smooth, $i:Z\rar X$ be normally smooth of codimensions $c$ and the following diagram be cartesian:
$$
\begin{xy}
   \xymatrix{
Z' \ar[d] \ar[r]^{i'} & X' \ar[d]\\
Z \ar[r]^i & X
}
\end{xy}
$$
Then $i'$ is normally smooth of codimension $c$ as well.
\end{enumerate}
\end{defi}
Note that all normally smooth inclusions are automatically closed immersions. The point is that 
the class of closed immersions is stable under all operations listed in the definition.
Let us give the most important example for our purposes:
\begin{lem}\label{lemBalProdNormSm}
 Let 
$$
\begin{xy}
 \xymatrix{E \ar[r] \ar[d] & E' \ar[d] \\
B \ar[r] & B'
}
\end{xy}
$$
be a cartesian diagram, where $E'\rar B'$ is a principal $G$-bundle and let $X$ be a $G$-variety. 
If $B\inj B'$ is normally smooth of codimension $c$, then so is 
$$E\underset{G}{\otimes} X \inj E'\underset{G}{\otimes} X$$
\begin{proof}
Let $(U_i'\rar B')_{i \in I}$ be an open cover over which $E'$ is trivial and $(U_i\rar B)_{i \in I}$ its pullback. Then 
$E\underset{G}{\otimes} X \rar E'\underset{G}{\otimes} X$ is locally isomorphic to 
$U_i\times X \rar U_i' \times X$.
\end{proof}
\end{lem}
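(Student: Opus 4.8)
The plan is to reduce, via a trivializing cover of the principal bundle, to a product situation and then feed the outcome into the closure axioms of Definition \ref{defiNormallySmooth}. First I would pick a Zariski-open cover $(U_i' \rar B')_{i \in I}$ over which the principal $G$-bundle $E' \rar B'$ becomes trivial, and set $U_i := U_i' \underset{B'}{\times} B$, so that $(U_i \rar B)_{i \in I}$ is an open cover of $B$. Since an open immersion is smooth, applying axiom \ref{ItemSmoothPull} of \ref{defiNormallySmooth} to the cartesian square with vertical maps $U_i \rar U_i'$ and $B \rar B'$ shows that each $U_i \inj U_i'$ is normally smooth of codimension $c$. As $\mathrm{id}_X$ is normally smooth of codimension $0$ by the second axiom, the product axiom then gives that $(U_i \inj U_i') \times \mathrm{id}_X \colon U_i \times X \rar U_i' \times X$ is normally smooth of codimension $c$.

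Next I would identify this product map with the restriction of $E \underset{G}{\otimes} X \rar E' \underset{G}{\otimes} X$ over $U_i'$. Triviality of $E'$ over $U_i'$ gives, as in \ref{PropBalancedProdExists}, an isomorphism $(E'|_{U_i'}) \underset{G}{\otimes} X \cong U_i' \times X$; since $E = E' \underset{B'}{\times} B$, forming the balanced product commutes with this base change, so the same trivialization restricts to $(E|_{U_i}) \underset{G}{\otimes} X \cong U_i \times X$ compatibly with the two inclusions, whence $E \underset{G}{\otimes} X \rar E' \underset{G}{\otimes} X$ restricted over $U_i'$ is carried to $(U_i \inj U_i') \times \mathrm{id}_X$. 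As the preimages of the $U_i'$ form an open cover of $E' \underset{G}{\otimes} X$, the fifth axiom of \ref{defiNormallySmooth} then assembles these local statements into the conclusion that $E \underset{G}{\otimes} X \inj E' \underset{G}{\otimes} X$ is normally smooth of codimension $c$.

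The one step that is not pure bookkeeping with \ref{defiNormallySmooth}, and hence the part I expect to be the main obstacle, is the compatibility in the second paragraph: one must check that the balanced product is stable under the base change $B \rar B'$ and that the trivializations over $U_i'$ and $U_i$ can be chosen so that $E \underset{G}{\otimes} X \rar E' \underset{G}{\otimes} X$ really is carried to $(U_i \inj U_i') \times \mathrm{id}_X$ on the nose rather than merely up to abstract isomorphism. Everything else follows formally from the axioms defining normally smooth inclusions.
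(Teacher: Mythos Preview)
Your argument is correct and follows exactly the paper's approach: trivialize the bundle on an open cover, identify the balanced product locally with $U_i\times X \rar U_i'\times X$, and conclude. You have simply made explicit which axioms of Definition~\ref{defiNormallySmooth} are invoked (axioms 2, 3, 5, and \ref{ItemSmoothPull}), whereas the paper leaves this to the reader.
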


\begin{lem}\label{lemBoXProd}
 Let $X,Y$ two stratified varieties
and equip $X\times Y$ with the product stratification. 
\begin{enumerate}
 \item Then $\boxtimes:D^b_{\Lambda}(X,\E) \times D^b_{\Lambda'}(Y,\E) \rar D^b_{\Lambda\times \Lambda'}(X\times Y,\E)$ is 
right exact amplitude one with respect to both the naive and the perverse t-structure. If $\E$ field, $\boxtimes$ is even exact.
\item We have $IC_\l \boxtimes IC_{\l'}=IC_{(\l,\l')}$
\end{enumerate}

\begin{proof}
Since the six functors commute with $\boxtimes$, an investigation of stalks yields the claims about amplitude and (right)exactness 
with respect to the naive t-structure.
 
We proof the other assertions inductively
and consider only the most difficult case $\E=\O$. If $X,Y$ both consist of a single stratum, 
the perverse and the naive t-structure coincide up to shift. Hence all claims hold in this case.

Assume we have proven the assertions for varieties, where $X$ has less then $n$-strata and $Y$ has less then $m$ strata.
By symmetry we only have to show that it also holds when we add a single open stratum to $X$.
Let $j:U\inj X$ be the new stratum and $i:Z\inj X$ be its complement.
\begin{enumerate}
 \item 

Recall that we have $\cal F \in \prescript{\mathfrak{p}}{}{D}^{\leq 0}(X,\O)$ if and only if $j^* \cal F \in \prescript{\mathfrak{p}}{}{D}^{\leq 0}(U,\O)$ and $i^* \cal F \in  \prescript{\mathfrak{p}}{}{D}^{\leq 0}(Z,\O)$.
Now let $\cal F \in \prescript{\mathfrak{p}}{}{D}^{\leq 0}(X,\O)$ and $\cal G \in \prescript{\mathfrak{p}}{}{D}^{\leq 0}(Y,\O)$.
We compute and use the induction hypothesis:
$$(j \times id_Y)^* (\cal F \boxtimes \cal G)= (j^* \cal F) \boxtimes \cal G \in \prescript{\mathfrak{p}}{}{D}^{\leq 0}(U\times Y,\O)$$
and 
$$(i \times id_Y)^* (\cal F \boxtimes \cal G)= (i^* \cal F) \boxtimes \cal G \in \prescript{\mathfrak{p}}{}{D}^{\leq 0}(Z\times Y,\O)$$
But these two lines exactly contain the conditions for $\cal F \boxtimes \cal G$ to be in $\prescript{\mathfrak{p}}{}{D}^{\leq 0}(X\times Y,\O)$.
The assertion about amplitude one is proven in a similar way.

\item We want to show $IC_\l \boxtimes IC_{\l'}=IC_{(\l,\l')}$. Without loss of generality we may assume that 
$X_\l=U$ and $Y_\l'=V$ are open. Consider the open closed decompositions:
$$U \inj X \linj F \text{ and } V \inj Y \linj G$$

By \cite[1.4.24.]{BBD} the intermediate extension $IC_\l$ is uniquely characterized by the property $j^* IC_\l =\O[d_\l]$ and $i^* IC_\l \in  \prescript{\mathfrak{p}}{}{D}^{\leq -1}(Z,\O)$
while $i^! IC_\l \in  \prescript{\mathfrak{p}}{}{D}^{\geq 1}(Z,\O)$.
We need to check that $IC_\l \boxtimes IC_{\l'}$ inherits these conditions. 
This can be done by using that $\boxtimes$ has amplitude one and decomposing $X\times Y$ successively into
$$X\times Y=(U\times V) \sqcup (U\times G) \sqcup (F\times V) \sqcup (F\times G)$$

\end{enumerate}

\end{proof}
\end{lem}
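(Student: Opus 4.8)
The plan is to treat the naive-$t$-structure assertion by a stalkwise computation, bootstrap it to the perverse $t$-structure by induction on the number of strata, and then deduce $IC_\l\boxtimes IC_{\l'}=IC_{(\l,\l')}$ from the Beilinson--Bernstein--Deligne characterisation of intermediate extensions. For the naive $t$-structure, note first that since the six functors commute with $\boxtimes$, for a point $(x,y)\in X\times Y$ one has $i_{(x,y)}^{*}(\cal F\boxtimes\cal G)=i_x^{*}\cal F\overset{L}{\otimes}_{\E}i_y^{*}\cal G$, and dually $i_{(x,y)}^{!}(\cal F\boxtimes\cal G)=i_x^{!}\cal F\overset{L}{\otimes}_{\E}i_y^{!}\cal G$ by relative K\"unneth; the latter already shows that $\boxtimes$ lands in $D^b_{\Lambda\times\Lambda'}(X\times Y,\E)$, since a box product of complexes with constant cohomology sheaves along the two factors is constant along the product strata. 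Over a field $\E$ the derived tensor product is exact; over $\E=\O$ the ring $\O$ has global dimension one with $\mathrm{Tor}_1^{\O}$ contributing only in cohomological degree $-1$, so $\overset{L}{\otimes}_{\O}$ is right exact ($D^{\le a}\otimes D^{\le b}\subo D^{\le a+b}$) and carries a pair of objects to a complex spread over an interval of length one. Reading this off stalk by stalk gives Part 1 for the naive $t$-structure, including the sharpening over a field.

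For the perverse $t$-structure I would induct on $|\Lambda|+|\Lambda'|$. If both index sets are singletons, then $X\times Y$ is a single smooth irreducible stratum, $\prescript{\mathfrak p}{}{D}^{\le0}=D^{\le-\dim}$ on all three spaces, and $\dim(X\times Y)=\dim X+\dim Y$, so the claim reduces to the naive case. Otherwise, say $|\Lambda|\ge2$; choose an open stratum $j\colon U\inj X$ with closed complement $i\colon Z\inj X$, so $U$ has one stratum and $Z$ has $|\Lambda|-1$, and both have strictly fewer strata than $X$. I would then invoke the recollement description of the perverse $t$-structure: $\cal H\in\prescript{\mathfrak p}{}{D}^{\le0}(X\times Y)$ iff $(j\times\mathrm{id})^{*}\cal H$ and $(i\times\mathrm{id})^{*}\cal H$ lie in $\prescript{\mathfrak p}{}{D}^{\le0}$, and dually for $\prescript{\mathfrak p}{}{D}^{\ge0}$ using $(i\times\mathrm{id})^{!}$ and $(j\times\mathrm{id})^{!}=(j\times\mathrm{id})^{*}$. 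Since $*$- and $!$-restriction commute with $\boxtimes$, these restrictions of $\cal F\boxtimes\cal G$ are $(j^{*}\cal F)\boxtimes\cal G$, $(i^{*}\cal F)\boxtimes\cal G$, and so on; by the induction hypothesis applied to $U$ and to $Z$ these sit where they should, and the recollement criterion reassembles the bound on $X\times Y$. The right-exactness and amplitude-one statements propagate the same way, carrying the single $\mathrm{Tor}_1$-shift along; over a field there is no shift and $\boxtimes$ is exact.

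For $IC_\l\boxtimes IC_{\l'}=IC_{(\l,\l')}$ I would first reduce to the case that $X_\l=U\subo X$ and $Y_{\l'}=V\subo Y$ are open dense, by replacing $X,Y$ with the closures $\ol X_\l,\ol X_{\l'}$ (closed unions of strata, hence stratified) and using $\ol{X_\l\times Y_{\l'}}=\ol X_\l\times\ol X_{\l'}$ with the product stratification, over which all three $IC$'s are supported. Then $IC_{(\l,\l')}=j_{!*}(\O_{U\times V}[d_\l+d_{\l'}])$, which by \cite[1.4.24]{BBD} is the unique extension of $\O[d_\l+d_{\l'}]$ from $U\times V$ with $i^{*}(\cdot)\in\prescript{\mathfrak p}{}{D}^{\le-1}$ and $i^{!}(\cdot)\in\prescript{\mathfrak p}{}{D}^{\ge1}$ along the closed complement $i\colon Z\inj X\times Y$ of $U\times V$. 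I would verify that $IC_\l\boxtimes IC_{\l'}$ has these properties by testing along $X\times Y=(U\times V)\sqcup(U\times Z_Y)\sqcup(Z_X\times V)\sqcup(Z_X\times Z_Y)$, with $Z_X=\ol X_\l\setminus U$, $Z_Y=\ol X_{\l'}\setminus V$: over $U\times V$ the box product of the two shifted constant sheaves is $\O[d_\l+d_{\l'}]$; over the mixed pieces $U\times Z_Y$ and $Z_X\times V$ one factor restricts to the constant sheaf, so $\boxtimes$ with it is the $t$-exact functor $q^{*}[d]$ attached to a smooth projection $q$, and it carries $i_{Z_Y}^{*}IC_{\l'}\in\prescript{\mathfrak p}{}{D}^{\le-1}$, $i_{Z_Y}^{!}IC_{\l'}\in\prescript{\mathfrak p}{}{D}^{\ge1}$ and their $Z_X$-analogues into the required ranges; over $Z_X\times Z_Y$ the box products $i_{Z_X}^{*}IC_\l\boxtimes i_{Z_Y}^{*}IC_{\l'}$ and $i_{Z_X}^{!}IC_\l\boxtimes i_{Z_Y}^{!}IC_{\l'}$ land in $\prescript{\mathfrak p}{}{D}^{\le-2}\subo\prescript{\mathfrak p}{}{D}^{\le-1}$ resp.\ $\prescript{\mathfrak p}{}{D}^{\ge1}$ by the right-exactness and amplitude-one bounds of Part 1. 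Uniqueness in \cite[1.4.24]{BBD} then forces $IC_\l\boxtimes IC_{\l'}=IC_{(\l,\l')}$.

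The hard part will be the amplitude bookkeeping in this last step over $\O$. ``Amplitude one'' alone does \emph{not} place the $i^{!}$-restriction on the mixed strata into $\prescript{\mathfrak p}{}{D}^{\ge1}$---it only gives $\prescript{\mathfrak p}{}{D}^{\ge0}$---so one really has to use the sharper observation that boxing with the shifted constant sheaf on an open stratum is the $t$-exact functor $q^{*}[d]$ for a smooth projection $q$. That the $\mathrm{Tor}_1$-term is genuinely present over $\O$, and must be tracked throughout, is already visible in the example $i_{0*}(\Z/l)\boxtimes i_{0*}(\Z/l)\cong i_{0*}(\Z/l\oplus\Z/l[1])$ on $\A^{2}$; over a field all of this collapses to $t$-exactness and the argument is routine.
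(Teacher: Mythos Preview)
Your proof is correct and follows essentially the same route as the paper: stalkwise K\"unneth for the naive $t$-structure, induction on strata via recollement for the perverse $t$-structure, and the four-piece decomposition plus \cite[1.4.24]{BBD} for the $IC$-identity. In fact your treatment of the mixed strata $U\times Z_Y$ and $Z_X\times V$ is sharper than the paper's: you correctly observe that ``amplitude one'' alone only yields $\prescript{\mathfrak p}{}{D}^{\ge 0}$ for the $!$-restriction there, and that one must instead use the $t$-exactness of $\O_U[d_\l]\boxtimes(-)=q^*[d_\l]$ for the smooth projection $q$---a point the paper's sentence ``this can be done by using that $\boxtimes$ has amplitude one'' glosses over.
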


Given a variety $X$, we denote by $IC_X=IC_X^\E$ the $IC$-sheaf corresponding to the constant local system $\E$ on the smooth locus.
\begin{prop}\label{PropICclosed}
 Let $i:Z\rar X$ be normally smooth of codimension $c$. Then we have
$$i^* IC_X \cong IC_Z[c]$$
\begin{proof}
 We need to show, that the property $i^* IC_X \cong IC_Z[c]$ of a morphism is closed under the operations in \ref{defiNormallySmooth}:
\begin{enumerate}
 \item \cite[16.7]{MilneEtale}
\item Trivial
\item This follows from $IC_{X\times X'}=IC_X \times IC_{X'}$ by \ref{lemBoXProd}.
\item Trivial
\item Straightforward.
\item This follows, since $IC$-complexes are preserved by smooth pullbacks (up to appropriate shift).
\end{enumerate}
\end{proof}
\end{prop}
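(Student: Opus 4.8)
The plan is to verify that the property $P(i)$: ``$i^* IC_X \cong IC_Z[c]$'' is preserved under each of the six closure operations in Definition \ref{defiNormallySmooth}, since the class of normally smooth inclusions is by construction the smallest class stable under those operations, and any class of closed immersions satisfying $P$ and closed under those operations must contain all normally smooth inclusions. So the real content is six small verifications.

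First, for axiom (1), the inclusion of one smooth variety into another: here $IC_X = \E[\dim X]$ and $IC_Z = \E[\dim Z]$ are just shifted constant sheaves, so $i^* IC_X = \E[\dim X] = \E[\dim Z + c] = IC_Z[c]$; this is essentially \cite[16.7]{MilneEtale}. Axioms (2) and (4) are formal: an isomorphism visibly satisfies $P(i)$ with $c = 0$, and if $Y \to Z \to X$ are normally smooth of codimensions $c', c$ then $i_{YX}^* IC_X = i_{YZ}^*(i_{ZX}^* IC_X) = i_{YZ}^*(IC_Z[c]) = IC_Y[c'][c] = IC_Y[c + c']$. For axiom (3), the product: by Lemma \ref{lemBoXProd}(2) we have $IC_{X \times X'} = IC_X \boxtimes IC_{X'}$, and pullback commutes with external tensor product, so $(i \times i')^*(IC_X \boxtimes IC_{X'}) = (i^* IC_X) \boxtimes (i'^* IC_{X'}) = (IC_Z[c]) \boxtimes (IC_{Z'}[c']) = IC_{Z \times Z'}[c + c']$, again using Lemma \ref{lemBoXProd}(2).

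For axiom (5), the local condition: $IC$-sheaves and the property of being an $IC$-extension of the constant sheaf are local in the Zariski (indeed smooth) topology, and $i^*$ commutes with restriction to an open cover, so if $P(\phi^{-1}(U_i) \to U_i)$ holds for all $i$ then $P(\phi)$ holds. Axiom (6), stability under smooth base change, is the one real input beyond bookkeeping: if $X' \to X$ is smooth and $i: Z \to X$ is normally smooth with $Z' \to X'$ the pullback, then smooth pullback preserves $IC$-sheaves up to the appropriate shift (for a smooth morphism $g$ of relative dimension $d$, $g^* IC_X[d] = IC_{X'}$), and this shift is the same on both $X$ and $Z$ since $Z' \to Z$ has the same relative dimension as $X' \to X$ (the square is cartesian); combining with base change for $i^*$ one gets $i'^* IC_{X'} = IC_{Z'}[c]$. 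I expect this last step to be the main obstacle only in the sense of getting the normalization of shifts exactly right; everything else is routine diagram-chasing. Hence $P$ holds for all normally smooth inclusions, which is the claim.
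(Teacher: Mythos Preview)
Your proposal is correct and follows essentially the same approach as the paper: both verify that the property $i^* IC_X \cong IC_Z[c]$ is preserved under each of the six generating operations in Definition~\ref{defiNormallySmooth}, using the same ingredients (Milne's description of $IC$ on smooth varieties for (1), Lemma~\ref{lemBoXProd} for (3), and smooth pullback compatibility with $IC$ for (6)). Your write-up simply spells out in more detail what the paper records in one-line references.
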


\subsection{Approximations}
Let $X$ be a variety equipped with a $G$-action. We say that a stratification on $X$ is $G$-stable or that the action is compatible, 
if each stratum is preserved under the $G$-action.
We now have all notions needed to formulate the definition of an approximation.
\begin{defi}
 Let $X=\bigsqcup_{\l \in \Lambda} X_\l$ be a $G$-variety equipped with a $G$-stable acyclic stratification. An approximation $(X_n,X,G)$ of $X$ consists of 
a commutative diagram of varieties
$$
\begin{xy}
\xymatrix{
 & & X  \\
X_1 \ar[r] \ar[urr] \ar[d] & X_2 \ar[r] \ar[d] \ar[ur] & X_3 \ar[r] \ar[d] \ar[u] &\ldots \\
\ol X_2 \ar[r] & \ol X_2 \ar[r] & \ol X_3 \ar[r] & \ldots \\
}
\end{xy}
$$
such that
\begin{itemize}
\item Each $X_n$ is equipped with a $G$-action and the maps $X_n\rar X$ are smooth, $n$-acyclic and $G$-equivariant.
\item The maps $X_n \rar X_{n+1}$ are closed inclusions and equivariant.
\item The maps $X_n\rar \ol X_n$ are principal $G$-bundles.

\item Each $\ol X_n$ is equipped with a stratification indexed by $\Lambda$ and a refinement into an acyclic stratification 
indexed by some $\wt \Lambda_n$.
The maps $\ol X_n \rar \ol X_{n+1}$ are maps of stratified varieties with respect to the stratifications $\wt \Lambda_n,\wt \Lambda_{n+1}$.

\item The stratification on $X_n$ induced by $(X,\Lambda)$ and by $(\ol X_n,\Lambda)$ coincide.
\item For all $\l\in \Lambda$ the maps $\ol X_n\rar \ol X_{n+1}$ induce normally smooth inclusions, between 
strata closures:
$$\ol {\ol X}_{n,\l} \inj \ol {\ol X}_{n+1,\l}$$
\end{itemize}
\end{defi}

\begin{rem}
There are various variants, which inspired these conditions, see \cite{OlafThesis}.
\end{rem}

Let us give some examples:
\begin{ex}\label{exApproxGln}
 \begin{itemize}
\item Let $X=pt$ and $G=\mathbb G_m$. Then 
$$
\begin{xy}
\xymatrix{
 & & pt  \\
(\mathbb A^1-0) \ar[r] \ar[urr] \ar[d] & (\mathbb A^2-0) \ar[r] \ar[d] \ar[ur] & (\mathbb A^3-0) \ar[r] \ar[d] \ar[u] &\ldots \\
\mathbb P^0 \ar[r] & \mathbb P^1 \ar[r] & \mathbb P^2 \ar[r] & \ldots \\
}
\end{xy}
$$
is an approximation.
  \item More generally let $X=pt$ and $G=GL_k$. Denote by $Gr(k,n)$ the Grassmannian and by $E(k,n)$ be the set of $k$-tuples
of linearly independent vectors (i.e. the $E(k,n)$ are Stiefel varieties). Then 
$$
\begin{xy}
\xymatrix{
 & & pt  \\
E(k,k) \ar[r] \ar[urr] \ar[d] & E(k,k+1) \ar[r] \ar[d] \ar[ur] & E(k,k+2) \ar[r] \ar[d] \ar[u] &\ldots \\
Gr(k,k) \ar[r] & Gr(k,k+1) \ar[r] & Gr(k,k+2) \ar[r] & \ldots \\
}
\end{xy}
$$
is an approximation. Over the complex numbers, it is well known, that the Stiefel manifold $E(k,n)$ is $2k$-connected, 
hence $2k$-acyclic. Using \cite[6.1.9.]{BBD}, we see that $E(k,n)$ is also $2k$-acyclic in our setting.
\end{itemize}
\end{ex}
It will be convenient to have ways of constructing new approximations out of old ones:
\begin{thm}\label{thmApproxConstr}
\begin{enumerate}
\item Let $(X_n,X,G)$ and $(Y_n,Y,H)$ be two approximations. Then $(X_n\times Y_n,X\times Y,G\times H)$ is an approximation.
\item Let $N \inj P \surj L$ be a split short exact sequence of algebraic groups. Fix a splitting $L\inj P$.
and suppose that $N$ is acyclic. 
If $(E_n,pt,L)$ is an approximation, then $(P\underset{L}{\otimes} E_n,pt,P)$ is an approximation as well.
\item Let $X$ be a variety with a $G$-action and compatible acyclic stratification. Let $(E_n,pt,G)$ be an approximation. 
Then $(E_n \times X,X,G)$ is an approximation as well, where $G$ acts diagonally on $E_n \times X$.
\end{enumerate}
\begin{proof}
 \begin{enumerate}
\item All involved properties are stable under products.
\item We need to check that $P\underset{L}{\otimes} E_n$ is still $n$-acyclic. Using the multiplication map 
and the splitting we get an $L$-equivariant isomorphism of varieties $N\times L\rar P$. Hence we may compute:
$$ 
P\underset{L}{\otimes} E_n \cong (N \times L)\underset{L}{\otimes} E_n =N \times E_n
$$
But the product of two $n$-acyclic maps is still $n$-acyclic.
\item 
\ref{PropAcyc} gives us acyclicity
and \ref{lemBalProdNormSm} guarantees the normally smoothness conditions.
\end{enumerate}
\end{proof}
\end{thm}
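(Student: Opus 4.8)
The plan is, for each of the three constructions, to run down the list of conditions in the definition of an approximation and verify them one by one, using the stability results assembled above. Item~(1) is purely formal: one equips $X_n\times Y_n$ with the product $G\times H$-action and the product stratification, together with its evident acyclic refinement; then smoothness, closedness of the vertical inclusions, and equivariance are immediate, $n$-acyclicity of $X_n\times Y_n\rar X\times Y$ is \ref{PropAcyc}(4), the map $X_n\times Y_n\rar\ol X_n\times\ol Y_n$ is a principal $G\times H$-bundle because a product of Zariski-locally trivial torsors trivializes over a product of trivializing opens, the two stratifications induced on $X_n\times Y_n$ agree because they do factorwise, and the transition maps on strata closures are products of normally smooth inclusions, hence normally smooth by \ref{defiNormallySmooth}(3).

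For item~(2) the decisive observation is that the chosen splitting $L\inj P$, together with multiplication, provides an isomorphism of varieties $N\times L\cong P$ that is equivariant for right translation of $L$ on the second factor and on $P$. It yields $P\underset{L}{\otimes}E_n\cong(N\times L)\underset{L}{\otimes}E_n\cong N\times E_n$ compatibly with the residual $P$-action, and likewise $(P\underset{L}{\otimes}E_n)/P\cong E_n/L$; in particular the base varieties of the new approximation are literally those of $(E_n,pt,L)$. Hence every condition referring only to the bases and the bundles over them is inherited verbatim: the two stratifications and their compatibility along the transition maps, the normally smooth inclusions of strata closures, and local triviality of $P\underset{L}{\otimes}E_n\rar E_n/L$ as a principal $P$-bundle (trivialize $E_n\rar E_n/L$ over $U$ and note $P\underset{L}{\otimes}(L\times U)\cong P\times U$). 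The one new point is that $P\underset{L}{\otimes}E_n\rar pt$ is smooth and $n$-acyclic; under the identification with $N\times E_n$ this follows from \ref{PropAcyc}(4), using that $N$ is acyclic (so $N\rar pt$ is $n$-acyclic) and that $E_n\rar pt$ is $n$-acyclic by hypothesis.

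For item~(3) I would put $\ol X_n:=E_n\underset{G}{\otimes}X$. Then \ref{PropBalancedProdExists} makes it a variety with $\ol X_n\rar E_n/G$ a locally trivial bundle with fiber $X$, and \ref{lemStratOnBalancedProd} equips it with the $\Lambda$-stratification $\{E_n\underset{G}{\otimes}X_\l\}_\l$ and an acyclic refinement indexed by $\wt\Lambda_n\times\Lambda$, acyclic because the stratification of $X$ already is. The map $E_n\times X\rar\ol X_n$ is a principal $G$-bundle, being locally over a trivializing $U\subset E_n/G$ the projection $G\times U\times X\rar U\times X$; the map $E_n\times X\rar X$ is smooth and, being the base change of $E_n\rar pt$ along $X\rar pt$, is $n$-acyclic by \ref{PropAcyc}(1); and the stratifications induced on $E_n\times X$ from $X$ and from $\ol X_n$ both pull back to $\{E_n\times X_\l\}$, so they coincide. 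There remains normal smoothness of the transition maps $\ol{\ol X}_{n,\l}\inj\ol{\ol X}_{n+1,\l}$; here I would identify $\ol{\ol X}_{n,\l}=E_n\underset{G}{\otimes}\ol X_\l$ (closures of strata in a balanced product are balanced products of strata closures, checked locally over $E_n/G$; note $\ol X_\l$ is $G$-stable), observe that the square with corners $E_n,E_{n+1},E_n/G,E_{n+1}/G$ is automatically cartesian since a morphism of $G$-torsors over a fixed base is an isomorphism, and feed this square together with the normally smooth inclusion $E_n/G\inj E_{n+1}/G$ (the last axiom of the approximation $(E_n,pt,G)$, valid because $pt$ has a single stratum) into \ref{lemBalProdNormSm}.

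I expect the main obstacle to be precisely this last step of~(3): converting the abstract ``normally smooth inclusion of strata closures'' into usable input for \ref{lemBalProdNormSm}. It rests on two small but essential facts that are easy to pass over --- that the closure of $E_n\underset{G}{\otimes}X_\l$ inside $E_n\underset{G}{\otimes}X$ is really $E_n\underset{G}{\otimes}\ol X_\l$ (a local-over-the-base statement using Zariski-local triviality), and that the square relating $E_n,E_{n+1}$ to their $G$-quotients is cartesian (a one-line consequence of the torsor formalism, but indispensable). Everything else in the three items is a direct, if somewhat tedious, check against the definition.
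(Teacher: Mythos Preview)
Your argument is correct and follows precisely the route the paper takes: item~(1) by stability under products, item~(2) via the splitting identification $P\underset{L}{\otimes}E_n\cong N\times E_n$ to reduce $n$-acyclicity to \ref{PropAcyc}, and item~(3) by combining \ref{PropAcyc} for acyclicity with \ref{lemBalProdNormSm} for the normally smooth transition maps. You have in fact filled in considerably more detail than the paper does---in particular your explicit identification of the strata closures in the balanced product and the cartesianness of the $E_n,E_{n+1}$ square are exactly the points the paper leaves implicit when it simply cites \ref{lemBalProdNormSm}.
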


\begin{ex}
Let $X$ be a variety with $G$-action and compatible acyclic stratification. Combining \ref{exApproxGln} and \ref{thmApproxConstr} 
we see that there exist approximations $(X_n,X,G)$ in the following cases:
\begin{itemize}
\item $G=T$ is a torus or more generally a product of some $GL_{n_i}$.
\item $G$ is a parabolic subgroup of $GL_n$.
\item $G$ is a connected solvable linear algebraic group. Indeed the point is that $G$ can be written as a semidirect product of 
a maximal torus $T$ and its unipotent elements $G_u$. Furthermore $G_u$ is an iterated extension of additive groups 
and hence acyclic. See \cite[10.6]{Borelalggrp} for these facts.
\end{itemize}
\end{ex}
\subsection{The equivariant derived category as a limit}
Recall the notion of an inverse limit of categories from  \cite{OlafThesis}.
Let 
$$
\begin{xy}
 \xymatrix{\cal C_0 & \ar[l]_{F_1} \cal C_1 & \ar[l]_{F_2} \cal C_2 & \ar[l]_{F_3} \ldots
}
\end{xy}
 $$
be a sequence of categories and functors. We define the category $\varprojlim \cal C_n$ in the following way:
\begin{itemize}
 \item Objects consist of families $(X_i,\psi_i)$, where 
$X_i \in \cal C_i$ and $\psi_i:F_i(X_{i+1}) \rar X_i$ is an isomorphism.
\item Morphisms $f: (X_i,\psi_i) \rar (X'_i,\psi'_i)$ consist of maps $f_i:X_i\rar X_i'$ such that
$$
\begin{xy}
 \xymatrix{F_i(X_{i+1}) \ar[d]_{F_i(f_{i+1})} \ar[r]^{\psi_i} & X_i \ar[d]^{f_i} \\
F_i(X'_{i+1})  \ar[r]^{\psi'_i} & X'_i  
}
\end{xy}
$$
commutes.
\end{itemize}
\begin{thm}\label{thmOlafApprox}
 Let $(X,\Lambda)$ be an acyclically stratified variety with compatible $G$-action. Let $(X_n,X,G)$ be an approximation. Then we have 
\footnote{See \ref{subsecInverseLimits} for the definition of $\varprojlim$.}
$$\varprojlim D^b_{\Lambda}(\ol X_n,\E) \cong D^b_{G,\Lambda}(X,\E)$$
\end{thm}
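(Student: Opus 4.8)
The plan is to combine the approximation formula for the equivariant derived category with the acyclic comparison results already established, reducing everything to a diagram chase over the two-row approximation system. First I would recall that for an acyclically stratified variety $(X,\Lambda)$ with compatible $G$-action and an approximation $(X_n,X,G)$, each $\ol X_n$ carries the stratification indexed by $\Lambda$, and that $X_n \to \ol X_n$ is a principal $G$-bundle while $X_n \to X$ is smooth, $n$-acyclic and $G$-equivariant. The key input is \ref{propComputationOfEquivDer}: since $X_n \to X$ factors as $X_n \to X//G$ (the torsor $X_n \to \ol X_n$ together with the equivariant map $X_n \to X$), and since the map $X_n \to X$ appearing there is $n$-acyclic, we obtain for each interval $I=[a,b]$ with $b-a = n$ an equivalence $D^I_{G,\Lambda}(X,\E) \cong D^I_\Lambda(X, X_n, \E)$, where the right-hand category is the category of triples $(\cal F_X, \eta, \cal F_{\ol X_n})$ glued along pullback to $X_n$. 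Because $X_n \to \ol X_n$ is a principal $G$-bundle (hence smooth surjective) and $X_n \to X$ is smooth surjective, descent along these two maps identifies the gluing category with $D^I_\Lambda(\ol X_n, \E)$: a sheaf on $\ol X_n$ is the same as a sheaf on $X$ together with descent data for $X_n \to X$ through the bundle, and conversely.

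Second I would assemble these equivalences into a compatible system. For $I_n = [-n, 0]$ (say) the equivalence $D^{I_n}_{G,\Lambda}(X,\E) \cong D^{I_n}_\Lambda(\ol X_n,\E)$ is compatible with the transition functors: the maps $\ol X_n \to \ol X_{n+1}$ are covered by the equivariant closed inclusions $X_n \to X_{n+1}$, which give a transformation $f_{n+1} \Rightarrow f_n$ of maps into $X//G$, and part (2) of \ref{propComputationOfEquivDer} says that pullback along this transformation is intertwined with the forgetful functors from $D^b_{G,\Lambda}(X,\E)$. On the other side, restriction along $\ol X_n \to \ol X_{n+1}$ is the transition functor defining $\varprojlim D^b_\Lambda(\ol X_n,\E)$, and the normally-smooth-inclusion condition on strata closures $\ol{\ol X}_{n,\l} \inj \ol{\ol X}_{n+1,\l}$ together with \ref{lemBoXProd}/\ref{PropICclosed} guarantees this restriction preserves the $\Lambda$-constructibility conditions, so the $\varprojlim$ is well-defined and the transition functors match. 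The passage from $D^{I_n}$ to the full $D^b$ uses that every object of $D^b_\Lambda(\ol X_n,\E)$ has bounded cohomological amplitude, so it lands in $D^{I_n}_\Lambda(\ol X_n,\E)$ for $n$ large, and likewise in the equivariant category; thus the truncated equivalences stabilize.

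Third I would conclude by invoking the abstract machinery of \cite{OlafThesis} (the cited Section 5.3, or the analogue of \cite[Thm 44]{OlafThesis} for inverse limits) which says precisely that if one has a tower $\cal C_0 \lar \cal C_1 \lar \cdots$ of triangulated categories and a target category $\cal C$ equipped with compatible functors $\cal C \to \cal C_n$ inducing equivalences on the bounded-amplitude pieces $\cal C^{I_n} \cong \cal C_n^{I_n}$, then $\cal C \cong \varprojlim \cal C_n$. Applying this with $\cal C = D^b_{G,\Lambda}(X,\E)$, $\cal C_n = D^b_\Lambda(\ol X_n,\E)$ and the forgetful/restriction functors yields the claimed equivalence $\varprojlim D^b_\Lambda(\ol X_n,\E) \cong D^b_{G,\Lambda}(X,\E)$.

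The main obstacle I expect is the identification of the gluing category $D^I_\Lambda(X, X_n, \E)$ with $D^I_\Lambda(\ol X_n, \E)$ — i.e.\ checking that faithfully flat descent along the principal bundle $X_n \to \ol X_n$ (composed with the comparison along $X_n \to X$) is an equivalence on the level of $\Lambda$-constructible derived categories, and that it is compatible with the transition functors in a strictly coherent way. This is where the torsor formalism of Section 2 (particularly \ref{propIterQuot} and the $fpqc$-descent lemmas) and the careful bookkeeping of the two compatible stratifications on $X_n$ (the one from $(X,\Lambda)$ and the one from $(\ol X_n,\Lambda)$, which the definition of approximation requires to coincide) all get used; the rest is a formal consequence of \ref{propComputationOfEquivDer} and the inverse-limit formalism.
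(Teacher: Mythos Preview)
Your proposal follows essentially the same route as the paper: pass to the gluing categories $D^I_\Lambda(X,X_n,\E)$, identify them with $D^I_\Lambda(\ol X_n,\E)$, check compatibility with the transition functors, and take the union over $I$. The paper cites \cite[Prop.~95]{OlafThesis} for the identification you call ``descent'' (and correctly flag as the main obstacle), so you have located the nontrivial input.

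There is one point where the paper is more careful and your sketch is too quick. You invoke part~(2) of \ref{propComputationOfEquivDer} to say the forgetful functors from $D^b_{G,\Lambda}(X,\E)$ intertwine with the transitions $i^*$. The paper does not rely on this alone: it proves Lemma~\ref{LemTransitionMaps}, which shows that on $D^I_\Lambda(X,X_n,\E)$ the transition $i^*$ is naturally isomorphic to $(\pi_{X_n}^*)^{-1}\pi_{X_{n+1}}^*$, where the $\pi$'s are the projections from the fiber product $X_n\times_X X_{n+1}$. This explicit reformulation is what makes the limit computation go through cleanly, since the $(\pi^*)^{-1}\pi^*$ system is manifestly eventually constant and identified with $D^I_{G,\Lambda}(X,\E)$ by \ref{propComputationOfEquivDer}(1). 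Your appeal to ``abstract machinery of \cite{OlafThesis}'' for the final step is fine in spirit, but the substance of that machinery in this case \emph{is} \ref{LemTransitionMaps}.

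One minor misattribution: the normally-smooth-inclusion condition on strata closures is not what guarantees that $i^*$ preserves $\Lambda$-constructibility here, and \ref{lemBoXProd}/\ref{PropICclosed} are not used in the proof of \ref{thmOlafApprox} at all. That condition is irrelevant for this theorem and only enters later in \ref{thmFormalityInclusion}, where one needs $i^* IC'_\Lambda = IC_\Lambda$ to control the Ext-algebras under restriction.
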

\begin{proof}[Proof of \ref{thmOlafApprox}]
Consider the diagram 
$$
\begin{xy}
 \xymatrix{
X_0 \ar[rr]^i \ar@{=}[d] && X_1 \ar[rr]^i \ar@{=}[d] && X_2 \ar@{=}[d] & \ldots \\
X_0   && X_1   && X_2   & \ldots \\
& X_0\underset{X}{\times} X_1 \ar[lu] \ar[ru]  && X_1 \underset{X}{\times} X_2 \ar[lu] \ar[ru]  && \ldots   \\
}
\end{xy}
$$ 
It does not commute. 
However by \ref{LemTransitionMaps} and \ref{propComputationOfEquivDer} it will after applying $D^I_\Lambda(X,\_,\E)$ to it and dropping the first $|I|+1$ terms. 
Hence we may compute
$$\varprojlim D^I_{\Lambda}(\ol X_n,\E)$$
using $(\pi_{X_k}^*)^{-1}\pi_{X_{k+1}}^*$ as transition functors instead of $i^*$.
Furthermore this replacement is compatible with enlarging $I$. Let us distinguish the two ways of taking the limit notationally 
by 
$$\varprojlim D^I_{\Lambda}(X,X_n,\E) | i^* \text{ and  } \varprojlim D^I_{\Lambda}(X,X_n,\E) | (\pi_{X_k}^*)^{-1}\pi_{X_{k+1}}^*$$
Now we compute:
\begin{align*}
\varprojlim D^b_{\Lambda}(\ol X_n,\E) =& \bigcup_I \varprojlim D^I_{\Lambda}(\ol X_n,\E) & \\
=& \bigcup_I \varprojlim D^I_{\Lambda}(X,X_n,\E) | i^* &  \cite[Prop. 95]{OlafThesis} \\
=& \bigcup_I \varprojlim D^I_{\Lambda}(X,X_n,\E) | (\pi_{X_k}^*)^{-1}\pi_{X_{k+1}}^* & \ref{LemTransitionMaps}\\
=& \bigcup_I D^I_{G,\Lambda}(X,\E) & \ref{propComputationOfEquivDer}\\
=& D^b_{G,\Lambda}(X,\E) &\\
\end{align*}
\end{proof}

\begin{lem}\label{LemTransitionMaps}
Let $f:T\rar X//G$ and $g:T'\rar X//G$ be two maps, for which $f_X$ and $g_X$ are both smooth and $n$-acyclic.
Let $i:f \Rightarrow g$ be a transformation.
$$
\begin{xy}
 \xymatrix{ & X \\
\wt T \ar[d] \ar[r]^i \ar[ur] & \wt T' \ar[u] \ar[d] \\
T \ar[r] & T'
}
\end{xy}
$$
Let $I=[a,b]$ with $b-a \leq n$. Then there exists an isotransformation making the
following diagram of equivalences of categories commutative:
$$
\begin{xy}
 \xymatrix{
& D^I_c(X, \wt T \underset{X}{\times} \wt T',\E)   & \\
D^I_c(X,\wt T,\E) \ar[ru]^{\pi_1^*} && \ar[ll]^{i^*} D^I_c(X,\wt T',\E) \ar[lu]_{\pi_2^*}
}
\end{xy}
$$
\begin{proof}
 Consider the diagram 
$$
\begin{xy}
\xymatrix{
 & \wt T \underset{X}{\times} \wt T' \ar[ld]_{\pi_1} \ar[rd]^{\pi_2} & \\
\wt T \ar[rr]^i && \wt T' 
}
\end{xy}
$$
The problem is, that it does not commute. However we can enlarge it:
$$
\begin{xy}
\xymatrix{
 & \wt T \underset{X}{\times} \wt T' \ar[ld]_{\pi_1} \ar[rd]^{\pi_2} & & \ar[ll]_{id \underset{X}{\times} i} \wt T \underset{X}{\times} \wt T\\
\wt T \ar[rr]^i && \wt T' &
}
\end{xy}
$$

Now we have indeed 
$$i \circ \pi_1 \circ (id \times i) = \pi_2 \circ (id \times i)$$
Forming (truncated) derived categories, we get a diagram

\begin{xy}
\xymatrix{
 & D^I_c(X,\wt T \underset{X}{\times} \wt T',\E)   \ar[rr]^{(id \underset{X}{\times} i)^*} && D^I_c(X,\wt T \underset{X}{\times} \wt T,\E) \\
D^I_c(X,\wt T,\E) \ar[ru]^{\pi_1^*} && \ar[ll]^{i^*}  D^I_c(X, \wt T',\E)  \ar[lu]_{\pi_2^*}&
}
\end{xy}
along with a natural isomorphism
$$(id \times i)^* \circ \pi_1^* \circ i^*= (id \times i)^* \circ \pi_2^* $$
By \ref{propComputationOfEquivDer} all functors in the diagram are equivalences and hence we get:
$$\pi_1^* \circ i^* \cong \pi_2^*$$
\end{proof}
\end{lem}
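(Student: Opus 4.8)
The plan is to identify $\pi_1^*$, $\pi_2^*$ and $i^*$ all as instances of the canonical equivalence provided by \ref{propComputationOfEquivDer}, so that the asserted commutativity reduces to a formal composition. First I would record that the cartesian square exhibiting $\wt T\underset{X}{\times}\wt T'$ as a fibre product identifies $\pi_1$ with the base change of $g_X$ along $f_X$ and $\pi_2$ with the base change of $f_X$ along $g_X$; both projections are therefore smooth and, by \ref{PropAcyc}(1), $n$-acyclic. The structure map $\wt T\underset{X}{\times}\wt T'\rar X$ equals $f_X\circ\pi_1=g_X\circ\pi_2$, a composite of $n$-acyclic maps, hence $n$-acyclic by \ref{PropAcyc}(3); together with the torsor formalism (the diagonal $G$-action on the fibre product is free and admits a torsor quotient, equipped with the evident equivariant map to $X$) this brings $\wt T\underset{X}{\times}\wt T'$ into the scope of \ref{propComputationOfEquivDer}, as is already the case for $\wt T$ and $\wt T'$. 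In particular $\pi_1^*$, $\pi_2^*$ and $i^*$ are equivalences, and we may write $\Phi_f$, $\Phi_g$, $\Phi_h$ for the canonical equivalences $D^I_{G,c}(X,\E)\rar D^I_c(X,\wt T,\E)$, etc., attached to $f$, to $g$, and to the map $h$ to $X//G$ classified by $\wt T\underset{X}{\times}\wt T'$.

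Next I would check that $\pi_1$ underlies a transformation $h\Rightarrow f$: beyond $G$-equivariance, the only requirement is compatibility with the maps to $X$, and this holds because the structure map of $\wt T\underset{X}{\times}\wt T'$ to $X$ is by construction $f_X\circ\pi_1$; symmetrically $\pi_2$ underlies a transformation $h\Rightarrow g$. Feeding the three transformations $\pi_1$, $\pi_2$ and $i$ into \ref{propComputationOfEquivDer}(2) yields natural isomorphisms $\pi_1^*\circ\Phi_f\cong\Phi_h$, $\pi_2^*\circ\Phi_g\cong\Phi_h$ and $i^*\circ\Phi_g\cong\Phi_f$. Composing, $\pi_1^*\circ i^*\circ\Phi_g\cong\pi_1^*\circ\Phi_f\cong\Phi_h\cong\pi_2^*\circ\Phi_g$, and cancelling the equivalence $\Phi_g$ gives the required isotransformation $\pi_1^*\circ i^*\cong\pi_2^*$.

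A more hands-on variant, which sidesteps verifying that the projections underlie transformations, is to observe directly that the triangle $\wt T\overset{\pi_1}{\lar}\wt T\underset{X}{\times}\wt T'\overset{\pi_2}{\rar}\wt T'$ together with $i\colon\wt T\rar\wt T'$ does not commute, to repair it by adjoining one more fibre product $W$ carrying an $n$-acyclic map $q\colon W\rar\wt T\underset{X}{\times}\wt T'$ over which $i\circ\pi_1\circ q$ and $\pi_2\circ q$ coincide on the nose, and then to pull back along $q$ to get $q^*\circ\pi_1^*\circ i^*\cong q^*\circ\pi_2^*$ and cancel the equivalence $q^*$. In both approaches the content is the same, and I expect the main obstacle to be conceptual rather than technical: since a map to $X//G$ is recorded only by a roof $T\lar\wt T\rar X$, composing such maps unavoidably introduces fibre products, and the real work lies in the bookkeeping that keeps every map appearing in the argument $n$-acyclic, so that \ref{propComputationOfEquivDer} stays applicable — everything else is formal.
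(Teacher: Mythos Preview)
Your proposal is correct. Your second, ``hands-on'' variant is exactly the paper's route: the paper takes $W=\wt T\times_X\wt T$ and $q=id\times_X i$, obtains a natural isomorphism $(id\times i)^*\circ\pi_1^*\circ i^*\cong(id\times i)^*\circ\pi_2^*$, and cancels the equivalence $(id\times i)^*$. Your first approach is a mild but genuine simplification: by recognising that $\pi_1$ and $\pi_2$ are themselves transformations (to $f$ and to $g$ respectively), you can invoke \ref{propComputationOfEquivDer}(2) three times and compose through the common comparison functor $\Phi_h$ from $D^I_{G,c}(X,\E)$, obtaining $\pi_1^*\circ i^*\cong\pi_2^*$ directly, without ever introducing the auxiliary fibre product $\wt T\times_X\wt T$. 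The paper's extra step does no harm, but your route makes the role of \ref{propComputationOfEquivDer}(2) more transparent and avoids having to argue that the two composites through $q$ induce isomorphic pullback functors.
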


\section{Perverse sheaves on balanced products}
Let $E\rar B$ be a principal $G$-bundle and $X$ be a $G$-space. Assume that 
$E$ and $X$ are stratified in a way that is compatible with the $G$-action.

The main theme of this section will be that perverse sheaves on the balanced product $(E\times X)/G$ behave like perverse sheaves on $(E/G) \times X$.
More precisely we want to proof the following theorem:
\begin{thm}\label{thmBPPerv}
Let $(B,\Sigma)$ and $(X,\Lambda)$ be acyclically stratified varieties and $X$ be equipped with a compatible $G$-action.
Let $E\rar B$ be a principal $G$-bundle.
Then the following hold:
\begin{enumerate}
\item The space $E\underset{G}{\otimes} X=\bigsqcup_{\sigma,\l \in \Sigma \times \Lambda} E_\sigma \underset{G}{\otimes} X_{\l}$ is an acyclically stratified variety
\item If $B$ and $X$ are $IC^\O$-parity, then so is $E\underset{G}{\otimes} X$.
\item If $B$ and $X$ satisfy the BGS-condition, then so does $E\underset{G}{\otimes} X$.
\item The multiplicities $[\Delta_{\sigma,\l}:IC_{\sigma',\l'}]$ coincide 
in $\P_{\Sigma \times \Lambda}(E\underset{G}{\otimes} X,\K)$ and $\P_{\Sigma \times \Lambda}(B\times X,\K)$.
If $B\times X$ satisfies the BGS-condition, then they also coincide as graded multiplicities.
\end{enumerate}
\begin{proof}
The idea of proof will be to consider first the case where the bundle is trivial $E=G\times B$ and then 
use that the assertions of the theorem are local. We split it into a couple of lemmata:
 \begin{enumerate}
\item \ref{lemStratOnBalancedProd}
\item \ref{lemICBalProdParity}
\item \ref{lemBalProdBGSCond} 
\item \ref{lemBalancedProdMult}
 \end{enumerate}
\end{proof}
\end{thm}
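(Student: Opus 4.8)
The plan is to reduce everything to the trivial-bundle case $E = G \times B$, where $E \underset{G}{\otimes} X = B \times X$ and all four assertions become statements about the product stratification, already handled by \ref{lemBoXProd} together with the known properties of $B$ and $X$ separately. The reduction engine is the observation, used already in \ref{lemStratOnBalancedProd}, that $E \rar B$ is Zariski locally trivial, so there is a Zariski open cover $B = \bigcup U_i$ over which $E$ restricts to the trivial bundle, and hence $E \underset{G}{\otimes} X$ is covered by the opens $V_i := (E|_{U_i}) \underset{G}{\otimes} X \cong U_i \times X$. So the heart of the matter is that each of the four properties in question — acyclicity of the stratification, $IC^\O$-parity, the BGS-condition, and the (graded) multiplicities $[\Delta : IC]$ — can be checked Zariski-locally on the base $B$.

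First I would record the purely local claims as lemmata (these are the \texttt{\char`\\ref}s cited in the proof): \ref{lemStratOnBalancedProd} already gives (1), and part (2) of that lemma gives acyclicity. For (2), the $IC^\O$-parity statement \ref{lemICBalProdParity}: parity is a condition on stalks and costalks of $IC_{\sigma,\l}$, which by \ref{PropICclosed} (the normally smooth inclusion $V_i \inj E \underset{G}{\otimes} X$ — this is exactly \ref{lemBalProdNormSm}) pulls back to $IC$-sheaves on $U_i \times X$, and there \ref{lemBoXProd}(2) identifies $IC_{\sigma,\l}|_{U_i\times X}$ with $IC_\sigma \boxtimes IC_\l$ (suitably shifted), whose parity follows from parity of $B$ and $X$ using that $\boxtimes$ of two parity complexes is parity (stalks/costalks of a box product are tensor products of stalks/costalks, and torsion-freeness is preserved since these are free $\O$-modules). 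For (3), the BGS-condition \ref{lemBalProdBGSCond}: this is again a stalk condition on $l_\mu^* IC_\l$ with $\K$-coefficients, local on the base by the same normally-smooth-pullback argument, and on $U_i \times X$ it reduces via \ref{lemBoXProd} to the Künneth-type computation $\cal H^i(IC_\sigma \boxtimes IC_\l)$ being a sum of Tate twists, which holds because $B$ and $X$ are BGS. For (4), \ref{lemBalancedProdMult}: composition factors and their multiplicities in $\cal P_{\Sigma\times\Lambda}$ can be computed from stalks of $\Delta$ and $IC$ sheaves (e.g. via the character/stalk formula), hence again locally on $B$, where they agree with the multiplicities on $U_i \times X$, and those are independent of the $U_i$ and equal to the multiplicities on $B \times X$; the graded refinement uses the extra Tate grading supplied by the BGS-condition, transported the same way.

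The main obstacle is making the phrase ``can be checked Zariski-locally on $B$'' precise and uniform across all four items, and in particular ensuring that the open pieces $V_i \cong U_i \times X$ of $E\underset{G}{\otimes}X$ are genuinely glued along their overlaps $V_i \cap V_j \cong (U_i\cap U_j)\times X$ in a way compatible with the product stratifications — so that a complex which is, stratum-by-stratum, a constant sheaf / parity / Tate on each $U_i\times X$ is automatically so on the whole balanced product. This is where \ref{defiNormallySmooth} and especially axiom \ref{ItemSmoothPull} (smooth, hence in particular open, pullback) together with the open-cover axiom do the work: the properties ``$l_\mu^* \cal F$ has constant cohomology sheaves'', ``$\cal F$ is even/odd'', ``$l_\mu^* IC$ is a sum of Tate twists'' are all visibly local on the target, so it suffices that the $V_i$ form an open cover and that restriction to $V_i$ is the relevant normally smooth inclusion. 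Once this localization principle is in hand, every one of (1)–(4) follows by transporting the corresponding statement from \ref{lemBoXProd}, and the remaining work is routine bookkeeping of shifts and of the Tate grading.
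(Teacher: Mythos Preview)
Your overall plan — reduce to the trivial-bundle case and then argue that each of the four conditions is local on $B$ — is exactly the paper's strategy, and for items (1), (3), (4) your sketch lines up with \ref{lemStratOnBalancedProd}, \ref{lemBalProdBGSCond}, \ref{lemBalancedProdMult} essentially verbatim.

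There is, however, a concrete error in your treatment of (2). You invoke \ref{PropICclosed} and \ref{lemBalProdNormSm} for the inclusion $V_i \hookrightarrow E\underset{G}{\otimes} X$. But $V_i$ is a Zariski \emph{open} of the balanced product, whereas normally smooth inclusions in the sense of \ref{defiNormallySmooth} are closed immersions by construction (every generator and every operation in that definition preserves ``closed immersion''); \ref{lemBalProdNormSm} likewise concerns a closed embedding $B\hookrightarrow B'$. So neither result applies to $V_i$. This is not fatal to your argument — restriction of $IC$ along an open immersion is $IC$ for trivial reasons, and parity is visibly Zariski-local — but the citations you give are wrong and should be replaced by these elementary facts.

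It is also worth noting that the paper's proof of (2), \ref{lemICBalProdParity}, does \emph{not} use the Zariski cover at all. Instead it pulls back along the smooth torsor map $E\times X \to E\underset{G}{\otimes} X$: smooth pullback takes $IC$ to $IC$ (up to shift), on $E\times X$ the $IC$-sheaves are exterior products by \ref{lemBoXProd} and hence parity, and \ref{lemParityCritPullback} says parity descends along smooth surjections. This avoids any gluing argument and any appeal to normally smooth inclusions. Your Zariski-local route would also succeed once the citations are fixed, but the torsor-pullback argument is shorter and is the one the paper actually runs.
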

 The essence of \ref{thmBPPerv} is contained in the following statement:
\begin{cor}\label{corBPPerv}
 Let $(B,\Sigma)$ and $(X,\Lambda)$ be acyclically stratified varieties and $X$ be equipped with a compatible $G$-action.
Let $E\rar B$ be a principal $G$-bundle.
If $(B,\Sigma)$ and $(X,\Lambda)$ both satisfy the BGS-condition and $IC^\O$-parity, then $E\underset{G}{\otimes} X$ does also. Furthermore
$$wt(E\underset{G}{\otimes} X)=wt(B)\cdot wt(X)$$
\end{cor}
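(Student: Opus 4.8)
The plan: the assertions that $E\underset{G}{\otimes}X$ is again $IC^\O$-parity and BGS are exactly parts (1)--(3) of \ref{thmBPPerv}, so the only thing left to prove is the identity $wt(E\underset{G}{\otimes}X)=wt(B)\cdot wt(X)$. Recall that $wt(Y)$ is extracted, after the normalization of \cite{janGrass}, from the Frobenius eigenvalues on $End(P^{\Q_l})$ for $P=\bigoplus P_\l$ a minimal projective generator of $\P(Y,\Q_l)$ equipped with a lift to $Y_0$, and that the BGS-condition forces $wt(Y)\subseteq\{1,\q,\q^2,\ldots\}$. I would establish the product formula first in the split case and then deduce the general case by locality on the base.

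\emph{The split case.} If $E=G\times B$ is trivial then $E\underset{G}{\otimes}X=B\times X$ with the product stratification, by the remark after \ref{lemStratOnBalancedProd}. Fix minimal projective generators $P_B$, $P_X$ of $\P_\Sigma(B,\Q_l)$, $\P_\Lambda(X,\Q_l)$ together with their lifts to $B_0$, $X_0$. The claim is that $P_B\boxtimes P_X$ is a minimal projective generator of $\P_{\Sigma\times\Lambda}(B\times X,\Q_l)$: one has $\Delta_{\sigma,\l}=\Delta_\sigma\boxtimes\Delta_\l$, $\nabla_{\sigma,\l}=\nabla_\sigma\boxtimes\nabla_\l$ (base change) and $IC_{\sigma,\l}=IC_\sigma\boxtimes IC_\l$ (\ref{lemBoXProd}); since $\boxtimes$ is exact over the field $\Q_l$ (\ref{lemBoXProd}), tensoring the $\Delta$-flags of $P_B$ and $P_X$ gives a $\Delta$-flag of $P_B\boxtimes P_X$ with head $\bigoplus IC_{\sigma,\l}$, and a Künneth computation of $Ext^{>0}(\Delta_\sigma\boxtimes\Delta_\l,\nabla_{\sigma'}\boxtimes\nabla_{\l'})=0$ shows it is projective. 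A Künneth isomorphism in the constructible derived category then yields $End(P_B\boxtimes P_X)\cong End(P_B)\otimes_{\Q_l}End(P_X)$ (only the degree-zero term survives because both factors are perverse), and this isomorphism respects the Frobenius actions coming from the lifts. Since the Frobenius eigenvalues on a tensor product are the pairwise products, $wt(B\times X)=wt(B)\cdot wt(X)$ follows.

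\emph{The general case.} For an arbitrary principal $G$-bundle $E\rar B$, the projection $E\underset{G}{\otimes}X\rar B$ is Zariski-locally trivial with fibre $X$ (\ref{PropBalancedProdExists}), so over each trivializing Zariski-open $U\subseteq B$ one has $(E\underset{G}{\otimes}X)|_U\cong U\times X$ as stratified varieties over $\F_q$. I would then argue that the minimal projective generator, its endomorphism algebra, and the Frobenius action on it depend only on this Zariski-local data on $B$; equivalently, by \ref{thmBPPerv}(4) the graded composition multiplicities $[\Delta_{\sigma,\l}:IC_{\sigma',\l'}]$ agree on $E\underset{G}{\otimes}X$ and on $B\times X$, and under the BGS-condition the graded algebra $End(P)$ --- hence $wt$ --- is recovered from these multiplicities by BGG-type reciprocity. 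Thus $wt(E\underset{G}{\otimes}X)=wt(B\times X)=wt(B)\cdot wt(X)$.

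I expect the main obstacle to be the split case: checking rigorously that $P_B\boxtimes P_X$ is \emph{the} minimal projective generator --- projectivity, indecomposability of its summands, and that every simple appears --- and that the Künneth identification of endomorphism algebras is compatible with the Frobenius structure, together with pinning down, in the general case, the precise sense in which $wt$ is a Zariski-local invariant of the stratified $\F_q$-variety so that the comparison through the common local model $U\times X$ is legitimate. It may be cleanest to bypass the first point and read the weight formula straight off the graded-multiplicity statement \ref{thmBPPerv}(4) together with the dictionary between graded $[\Delta:IC]$ and $wt$ from \cite{janGrass}.
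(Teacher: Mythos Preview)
Your proposal is correct, and the route you flag at the very end---reading off $wt$ from the graded multiplicities $[\Delta:IC]$---is precisely what the paper does. In the paper the corollary carries no separate proof: the $IC^\O$-parity and BGS assertions are parts (2) and (3) of Theorem~\ref{thmBPPerv}, and the weight identity is already stated and proved inside Lemma~\ref{lemBalProdBGSCond} (with the split case in Lemma~\ref{LemProdBGSisBGS}). There the argument is simply that, under the BGS-condition, $wt$ is encoded by the graded numbers $[\Delta_\l:IC_\mu]$, and Lemma~\ref{lemExteriourProduct} gives $[\Delta_{\sigma,\l}:IC_{\sigma',\l'}]=[\Delta_\sigma:IC_{\sigma'}]\cdot[\Delta_\l:IC_{\l'}]$ in the split case, while Lemma~\ref{lemBalancedProdMult} (= part (4) of the theorem) transports this to the twisted case by Zariski-locality on $B$. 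Your K\"unneth argument for $End(P_B\boxtimes P_X)\cong End(P_B)\otimes End(P_X)$ is a legitimate alternative for the split case, but it costs you the verifications you yourself list (minimality of $P_B\boxtimes P_X$, Frobenius compatibility), and in the general case you still have to fall back on the multiplicity comparison anyway. The paper's approach avoids touching projective generators altogether and goes straight through multiplicities in both steps, which is shorter and sidesteps those checks.
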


\subsubsection{Locality of $IC^\O$-parity}

\begin{lem}\label{lemParityCritPullback}
 Let $\pi:X\rightarrow Y$ be a smooth surjection and $\cal F\in Sh_c(Y,\E)$ be a constructible sheaf on $Y$.
\begin{enumerate}
\item Our sheaf $\cal F$ is a local system, if and only if $\pi^*(\cal F)$ is.
\item Our sheaf $\cal F$ is $?$-even if and only if $\pi^* \cal F$ is $?$-even.
\end{enumerate}
Here $? \in \{!,*\}$.
\begin{proof}
\begin{enumerate}
 \item Assume that $\pi^*(\cal F)$ is a local system. If $\pi$ is even étale, the assertion holds.
Now all smooth surjections admit sections étale locally \cite[3.26]{MilneBook}.
More precisely for any smooth surjection $\pi:X\surj Y$, there exists an étale surjection $Y'\surj Y$ and 
$i:Y'\rar X$ making the diagram
$$
\begin{xy}
 \xymatrix{
 & X \ar[d]^\pi \\
Y' \ar[r] \ar@{-->}[ru]^i & Y
}
\end{xy}
$$
commutative.
Hence étale locally
we see that 
$$\cal F|_{Y'} \cong i^* (\pi^* \cal F)$$ 
is the pullback of a local system. Hence $\cal F$ is a local system. 
\item Trivial.
\end{enumerate}
\end{proof}
\end{lem}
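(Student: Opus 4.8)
The plan is to reduce everything to the elementary fact that a smooth surjection of varieties admits sections étale-locally on its target, together with the observation that both properties in question --- being a local system, and being $?$-even --- can be tested étale-locally on $Y$. Since $\pi^*$ is exact, commutes with restriction to (pre-images of) strata, and is faithfully flat, pulling back along a smooth surjection can neither create nor destroy these properties; the whole argument is then just making this precise.

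First I would handle part (1). The implication ``$\cal F$ a local system $\Rightarrow$ $\pi^*\cal F$ a local system'' is immediate, as the pullback of a locally constant sheaf along any morphism is locally constant. For the converse, assume $\pi^*\cal F$ is a local system. By a standard fact \cite{MilneBook} there exist an étale surjection $p\colon Y' \surj Y$ and a lift $s\colon Y' \rar X$ with $\pi \circ s = p$. Then $p^*\cal F = s^*\pi^*\cal F$ is a pullback of a local system, hence a local system on $Y'$; as $p$ is an étale cover and local constancy is étale-local on the base, $\cal F$ is a local system.

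For part (2) I would run the same section argument, after noting that $?$-evenness is again an étale-local condition on $Y$: the defining requirements --- constancy, torsion-freeness, and concentration in the prescribed (even) degrees of the cohomology sheaves of $l_\mu^?\cal F$ on each stratum --- are all preserved and reflected by pullback along an étale surjection, which is exact, commutes with the stratum (co)restrictions $l_\mu^*$ and $l_\mu^!$, and is faithfully flat. Hence ``$\cal F$ is $?$-even'' is equivalent to ``$p^*\cal F = s^*\pi^*\cal F$ is $?$-even'', which holds because $\pi^*\cal F$ is $?$-even; conversely $\pi^*$ preserves $?$-evenness by the same compatibilities, using smooth base change for $\pi$, whose only effect on $l_\mu^!$ is an even shift and a Tate twist that leave parity and torsion-freeness untouched.

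The only point that needs care is the interaction of the exceptional restrictions $l_\mu^!$ with the pullbacks $p^*$ and $\pi^*$ in part (2); this is smooth (respectively étale) base change for the shriek functors, so it goes through, but it is the step I would actually write out rather than dismiss. Part (1) has no real obstacle beyond invoking the existence of étale-local sections of smooth surjections.
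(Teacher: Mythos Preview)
Your proof of part~(1) is exactly the paper's: the forward direction is obvious, and for the converse you invoke the existence of \'etale-local sections of a smooth surjection and pull back along them. Nothing to add there.

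For part~(2) the paper simply writes ``Trivial''. What is meant is the direct reduction to part~(1) via smooth base change: for each stratum one has
\[
l_{\tilde\mu}^{?}\,\pi^*\cal F \;\cong\; \pi_\mu^{*}\, l_\mu^{?}\cal F \quad\text{(up to an even shift and Tate twist when }?=!\text{)},
\]
so the constancy of each cohomology sheaf $\cal H^i(l_\mu^{?}\cal F)$ is equivalent, by part~(1) applied to $\pi_\mu$, to constancy of its pullback, while torsion-freeness and odd-degree vanishing are reflected by faithful flatness. You do arrive at this in your final paragraph, and that is the argument to keep.

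Where your write-up is slightly off is the sentence ``$p^*\cal F = s^*\pi^*\cal F$ is $?$-even, which holds because $\pi^*\cal F$ is $?$-even''. For $?=*$ this is fine, but for $?=!$ the section $s$ is a closed immersion, not a smooth map, so $s^*$ does not commute with $l_\mu^{!}$ and there is no reason $s^*$ should preserve $!$-evenness. The section trick is what makes part~(1) work; in part~(2) it should be discarded in favour of the smooth base change identity above, applied directly to $\pi$. With that adjustment your argument is correct and matches the paper's intent.
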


\begin{lem}\label{lemICBalProdParity}
Let $E\rar B$ be a principal $G$-bundle, $B,X$ be stratified varieties and $X$ be such that the $G$-action preserves the stratification on $X$.
If $E$ and $X$ satisfy $IC^\O$-parity, then so does $E\underset{G}{\otimes} X$.
\begin{proof}
The $IC$-sheaves on a product are the exterior products of $IC$-sheaves on the factors by \ref{lemBoXProd}.
Hence the $IC$-sheaves on $E\times X$ are parity. 
The pullback map 
$$E\times X \rar E\underset{G}{\otimes} X $$
preserves $IC$-sheaves by smoothness 
and being parity can be tested after a smooth pullback by \ref{lemParityCritPullback}.
\end{proof}
\end{lem}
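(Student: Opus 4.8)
The plan is to reduce the statement to the product case, which is already under control via \ref{lemBoXProd}, using that the quotient map $p\colon E\times X\rar E\underset{G}{\otimes} X$ is a smooth surjection and that parity is a smooth-local condition by \ref{lemParityCritPullback}. First I would check that $E\times X$, with the product stratification $\bigsqcup_{\sigma,\l} E_\sigma\times X_\l$, satisfies $IC^\O$-parity. By \ref{lemBoXProd}(2) we have $IC^\O_{E_\sigma\times X_\l}=IC^\O_{E_\sigma}\boxtimes IC^\O_{X_\l}$, so it suffices to see that an external product of parity sheaves is parity. Writing each factor as a direct sum of an even and an odd object, this comes down to the external products of two even objects, of an even and an odd object, and of two odd objects. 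In each case, since the six functors commute with $\boxtimes$, the stalk (resp.\ costalk) of the external product along a stratum $E_{\sigma'}\times X_{\l'}$ is the external product of the corresponding stalks (resp.\ costalks) of the two factors, and a K\"unneth computation over the discrete valuation ring $\O$ shows that its cohomology sheaves are constant, torsion free (the potential $\mathrm{Tor}_1^{\O}$-term vanishes because one of the tensor factors is torsion free), and concentrated in degrees of a single parity, namely the sum of the parities of the factors. Hence $E\times X$ is $IC^\O$-parity.

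Next I would descend along $p$. Since $E\rar B$ is a principal $G$-bundle, so is $p$: over a Zariski open $U\subset B$ on which $E$ trivializes, $p$ is the trivial principal bundle $(U\times X)\times G\rar U\times X$, and over each stratum $p$ restricts to the principal bundle $E_\sigma\times X_\l\rar E_\sigma\underset{G}{\otimes} X_\l$; in particular $p$ is smooth and surjective of pure relative dimension $\dim G$. Because intermediate extensions are preserved by smooth pullback up to the shift by the relative dimension (as used in the proof of \ref{PropICclosed}), we get $p^*\bigl(IC^\O_{E_\sigma\underset{G}{\otimes} X_\l}\bigr)[\dim G]\cong IC^\O_{E_\sigma\times X_\l}$, which is parity by the previous paragraph. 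Finally, being $?$-even or $?$-odd along a stratum is a condition on the cohomology sheaves of the constructible sheaves $l^*_\bullet(-)$ and $l^!_\bullet(-)$; smooth pullback commutes with $l^*_\bullet$ on the nose and with $l^!_\bullet$ up to a fixed shift and Tate twist by smooth base change, so \ref{lemParityCritPullback}(2) applies degreewise and transfers parity of $IC^\O_{E_\sigma\times X_\l}$ back to parity of $IC^\O_{E_\sigma\underset{G}{\otimes} X_\l}$.

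The step I expect to need the most care is this last transfer: \ref{lemParityCritPullback} is phrased for sheaves, so one must first unwind the definition of parity into statements about the cohomology sheaves of stalks and costalks, and check their compatibility with $p^*$ — in particular that $p^!$ differs from $p^*$ only by $[2\dim G]$ and a Tate twist, which affects neither torsion-freeness nor the parity of the degrees in which cohomology is concentrated. By comparison the product step is essentially bookkeeping once \ref{lemBoXProd} is available, and the identification of $p$ as a principal bundle, stratum by stratum, is immediate from the Zariski-local triviality of $E\rar B$.
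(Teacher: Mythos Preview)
Your proposal is correct and follows essentially the same route as the paper: establish $IC^\O$-parity on $E\times X$ via $IC_{\sigma,\l}=IC_\sigma\boxtimes IC_\l$ from \ref{lemBoXProd}, then descend along the smooth surjection $E\times X\rar E\underset{G}{\otimes}X$ using that smooth pullback preserves $IC$-sheaves and that parity is detected after smooth pullback by \ref{lemParityCritPullback}. The paper's version is terser---it does not spell out the K\"unneth bookkeeping for why an external product of parity objects is parity, nor the unwinding of \ref{lemParityCritPullback} for $!$- versus $*$-stalks---but your added detail is sound and fills exactly the gaps a careful reader would want checked.
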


\subsubsection{Locality of multiplicities}
The multiplicities in the product are the products of multiplicities. More precisely:
\begin{lem}\label{lemExteriourProduct}
 Let $(X,\Lambda)$ and $(Y,\Lambda')$ be two stratified varieties
and $\E=\F,\K$ be a field for simplicity. Then we have 
the following relations between objects of $\P_{\Lambda\times \Lambda'}(X\times Y, \E)$:
\begin{enumerate}
\item $\Delta_{\l} \boxtimes \Delta_{\l'}=\Delta_{\l,\l'}$
\item $IC_{\l} \boxtimes IC_{\l'}=IC_{\l,\l'}$
\item $[\Delta_\l : IC_\mu] \cdot [\Delta_{\l'} : IC_{\mu'}] = [\Delta_{\l,\l' }: IC_{\mu,\mu' }]$
\end{enumerate}
The analogues of these identities also hold if we replace $X,Y$ by $X_0,Y_0$.
\begin{proof}
First of all $\boxtimes$ is exact for the perverse $t$-structure (over a field) by \ref{lemBoXProd}. 
\begin{enumerate}
\item Using exactness properties of our functors, we compute:

\begin{align*}
 \Delta_{\l} \boxtimes \Delta_{\l'} &= (l_{\l!} \K[d_\l]) \boxtimes (l_{\l'!} \E[d_\l']) \\
&=l_{(\l,\l')!} \E[d_\l+d_\l']  \\
&=\Delta_{\l,\l'}\\
\end{align*}


\item This is contained in \ref{lemBoXProd}.
\item Since $\boxtimes$ is exact and maps pairs of $IC$ sheaves to $IC$ sheaves, this follows by tensoring composition series 
of $\Delta_{\l}$ and $\Delta_{\l'}$.
\end{enumerate}
\end{proof}
\end{lem}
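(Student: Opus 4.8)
The plan is to derive all three identities from the two structural properties of the external product established in \ref{lemBoXProd}: that $\boxtimes$ commutes with the six operations, and that over a field it is exact for the perverse $t$-structure (hence in particular maps $\P_{\Lambda}(X,\E)\times\P_{\Lambda'}(Y,\E)$ into $\P_{\Lambda\times\Lambda'}(X\times Y,\E)$ and carries short exact sequences in each variable to short exact sequences).

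For (1) I would simply unwind the definitions from \ref{notationCanonicalSheaves}. Since $l_{(\l,\l')}=l_\l\times l_{\l'}$ and $d_{(\l,\l')}=d_\l+d_{\l'}$, compatibility of $\boxtimes$ with $!$-pushforward gives
$$\Delta_\l\boxtimes\Delta_{\l'}=(l_{\l!}\E[d_\l])\boxtimes(l_{\l'!}\E[d_{\l'}])=(l_\l\times l_{\l'})_!\,\E[d_\l+d_{\l'}]=\Delta_{\l,\l'},$$
and the costandard analogue is identical with $*$ in place of $!$. Statement (2) is \ref{lemBoXProd}(2) verbatim; over a field one could alternatively re-prove it by checking that $IC_\l\boxtimes IC_{\l'}$ inherits the characterizing support and cosupport conditions of the intermediate extension via the amplitude and exactness of $\boxtimes$, but there is nothing to add.

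For (3) I would argue by Jordan--H\"older. Choose composition series of $\Delta_\l$ in $\P_\Lambda(X,\E)$ and of $\Delta_{\l'}$ in $\P_{\Lambda'}(Y,\E)$, whose simple subquotients are $IC_\mu$ resp.\ $IC_{\mu'}$ with multiplicities $[\Delta_\l:IC_\mu]$ resp.\ $[\Delta_{\l'}:IC_{\mu'}]$. Applying $\boxtimes$ and using exactness in each variable, the two filtrations combine to a filtration of $\Delta_\l\boxtimes\Delta_{\l'}$, which equals $\Delta_{\l,\l'}$ by (1), whose graded pieces are the $IC_\mu\boxtimes IC_{\mu'}=IC_{\mu,\mu'}$ by (2), with $IC_{\mu,\mu'}$ occurring exactly $[\Delta_\l:IC_\mu]\cdot[\Delta_{\l'}:IC_{\mu'}]$ times. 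As the $IC_{\mu,\mu'}$ exhaust the pairwise non-isomorphic simple objects of $\P_{\Lambda\times\Lambda'}(X\times Y,\E)$, uniqueness of Jordan--H\"older multiplicities identifies this count with $[\Delta_{\l,\l'}:IC_{\mu,\mu'}]$. The $\F_q$-versions follow by the same three arguments once one knows that $\boxtimes$ stays exact on perverse Weil sheaves and still commutes with the six functors, which both reduce to the $\ol\F_q$-statements via compatibility of $\boxtimes$ with base change to $\ol\F_q$.

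I do not expect a genuine obstacle: the only delicate input, exactness of $\boxtimes$ for the perverse $t$-structure, has already been settled in \ref{lemBoXProd}. The remaining work is bookkeeping in the Jordan--H\"older step --- verifying that interlacing the two composition series really yields a filtration with the asserted graded pieces and that the resulting multiplicity is independent of the chosen series and of any refinement --- together with the routine verification that the $\F_q$-side inherits exactness and six-functor compatibility of $\boxtimes$ from the geometric situation.
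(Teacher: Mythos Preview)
Your proposal is correct and matches the paper's proof essentially line for line: the paper also invokes exactness of $\boxtimes$ from \ref{lemBoXProd}, unwinds the definition for (1), cites \ref{lemBoXProd} for (2), and deduces (3) by tensoring composition series. Your Jordan--H\"older paragraph just spells out what the paper compresses into one sentence.
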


\begin{lem}\label{lemLocMult}
 Let $(X,\Lambda)$ be a stratified variety and $U$ be an open subset.
Suppose that $\l,\mu$ are such that 
$$X_\l \cap U \neq \emptyset \neq X_\mu \cap U$$
Let $\E=\F,\K$ be a field. Then the multiplicities $[\Delta_\l:IC_\mu]$ coincide in $\P(U,\E)$ and $\P(X,\E)$. If we replace $X,U$ by
$X_0,U_0$ the analogue identity still holds.
\begin{proof}
This follows by choosing a composition series of $\Delta_l \in \P(X,\E)$, using that restriction to an open subset 
is exact and maps $IC_\nu$ to $IC_\nu$ or $0$ depending on whether $X_\nu \cap U=0$.
\end{proof}
\end{lem}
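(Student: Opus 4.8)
The plan is to reduce everything to the fact that restriction along an open immersion is $t$-exact for the perverse $t$-structure and is compatible with the formation of standard and intersection cohomology sheaves. Write $j\colon U\inj X$ for the inclusion. I would first record that $j^{*}=j^{!}$ is $t$-exact: as the right adjoint of the right $t$-exact functor $j_{!}$ it is left $t$-exact, and as the left adjoint of the left $t$-exact functor $j_{*}$ it is right $t$-exact. Next, base change for the exceptional pushforward $(l_{\l})_{!}$ along the cartesian square cutting out $X_{\l}\cap U$ gives $j^{*}\Delta_{\l}^{X}\cong\Delta_{\l}^{U}$; here the hypothesis $X_{\l}\cap U\neq\emptyset$ is exactly what guarantees that $X_{\l}\cap U$ is genuinely the stratum of $U$ indexed by $\l$, of dimension $d_{\l}$. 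Finally, since intermediate extension commutes with restriction to an open subset, $j^{*}IC_{\nu}^{X}\cong IC_{\nu}^{U}$ when $X_{\nu}\cap U\neq\emptyset$ and $j^{*}IC_{\nu}^{X}=0$ otherwise; in particular $j^{*}$ carries a simple object to a simple object or to $0$, and never identifies two non-isomorphic simples.

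With these tools the argument is short. Choose a composition series $0=F_{0}\subset F_{1}\subset\dots\subset F_{k}=\Delta_{\l}^{X}$ in $\P(X,\E)$ with $F_{i}/F_{i-1}\cong IC_{\nu_{i}}^{X}$, which exists since $\P(X,\E)$ is of finite length; then $[\Delta_{\l}^{X}:IC_{\mu}^{X}]=\#\{i:\nu_{i}=\mu\}$. Applying the exact functor $j^{*}$ produces a filtration $0=j^{*}F_{0}\subseteq\dots\subseteq j^{*}F_{k}=\Delta_{\l}^{U}$ whose $i$-th subquotient is $j^{*}IC_{\nu_{i}}^{X}$, equal to the simple object $IC_{\nu_{i}}^{U}$ if $X_{\nu_{i}}\cap U\neq\emptyset$ and to $0$ otherwise. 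Deleting the steps with vanishing subquotient leaves an honest composition series of $\Delta_{\l}^{U}$, so $[\Delta_{\l}^{U}:IC_{\mu}^{U}]=\#\{i:\nu_{i}=\mu\text{ and }X_{\mu}\cap U\neq\emptyset\}$. As $X_{\mu}\cap U\neq\emptyset$ by hypothesis, this equals $\#\{i:\nu_{i}=\mu\}=[\Delta_{\l}^{X}:IC_{\mu}^{X}]$.

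For the variant over $\F_{q}$ I would run the same argument for the open immersion $U_{0}\inj X_{0}$, using that restriction there is still perverse $t$-exact and compatible with $\Delta_{\cdot,0}$ and $IC_{\cdot,0}$ and that $\Delta_{\l,0}$ again admits a finite composition series with subquotients among the $IC_{\nu,0}$; alternatively one base changes to $\ol\F_{q}$, where base change is exact and commutes with $j^{*}$ and with the formation of $\Delta$ and $IC$, and invokes the case already proved. I do not expect a genuine obstacle here: the only point needing care is the bookkeeping that turns the image filtration of $\Delta_{\l}^{U}$, after discarding vanishing subquotients, into an actual composition series and rules out spurious identifications of simple objects — and this is precisely what the compatibility of $j^{*}$ with $IC$-sheaves provides.
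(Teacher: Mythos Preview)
Your proposal is correct and follows essentially the same route as the paper: choose a composition series of $\Delta_\l$ in $\P(X,\E)$, apply the exact functor $j^*$, use that $j^*IC_\nu$ is $IC_\nu$ or $0$ according to whether $X_\nu\cap U\neq\emptyset$, and read off the multiplicity. You have simply made explicit the $t$-exactness of $j^*$, the compatibility $j^*\Delta_\l^X\cong\Delta_\l^U$, and the bookkeeping of deleting vanishing subquotients, all of which the paper's one-line proof leaves implicit.
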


\begin{lem}\label{lemBalancedProdMult}
Let $(B,\Sigma)$ and $(X,\Lambda)$ be acyclically stratified varieties and $X$ be equipped with a compatible $G$-action.
Let $E\rar B$ be a principal $G$-bundle.
Then the multiplicities $[\Delta_{\sigma,\l}:IC_{\sigma',\l'}]$ coincide in 
$\P_{\Sigma \times \Lambda}(E\underset{G}{\otimes} X,\K)$ and $\P_{\Sigma \times \Lambda}(B\times X,\K)$.
The analogue holds if we replace $X$ by $X_0$ and $E\rar B$ by $E_0 \rar B_0$.
\begin{proof}
 If $E\rar B$ is trivial there is nothing to show. Now by \ref{lemLocMult} the multiplicity $[\Delta_{\sigma,\l}:IC_{\sigma',\l'}]$ 
can be computed after restricting to an open subset which intersects both strata nonempty. 
Hence it suffices to show that for every pair of strata $E_{\sigma}\underset{G}{\otimes} X_{\l}$ and $E_{\sigma'}\underset{G}{\otimes} X_{\l'}$ such that
$$ E_{\sigma'}\underset{G}{\otimes} X_{\l'} \subset \ol{E_{\sigma}\underset{G}{\otimes} X_{\l}}$$
there exists an open subset $U$ of $B$, over which $E$ is trivial and such that $B_\sigma$ and $B_{\sigma'}$ have both non empty 
intersection with $U$. Since $B_{\sigma'} \subset \ol{B_{\sigma}}$ any $U$ which intersects $B_{\sigma'}$ will automatically also intersect $B_{\sigma}$.
Since $B$ can be covered by opens over which $E$ trivializes, such a $U$ must exist. 
\end{proof}
\end{lem}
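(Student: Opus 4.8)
The plan is to read off the relevant Jordan--H\"older multiplicities Zariski-locally on $B$ and thereby reduce to the case of a trivial bundle. Recall from \ref{lemStratOnBalancedProd} that $E\underset{G}{\otimes} X$ is stratified by the subvarieties $E_\sigma\underset{G}{\otimes} X_\l$, and that if the bundle is trivial then $E\underset{G}{\otimes} X=B\times X$ with the product stratification, so in that case there is nothing to prove. The essential tool in general is the locality lemma \ref{lemLocMult}: a multiplicity $[\Delta_\nu:IC_{\nu'}]$ in $\P(Y,\K)$ is detected on any open $V\subseteq Y$ whose traces on the two strata in question are both nonempty.

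Fix indices $(\sigma,\l)$ and $(\sigma',\l')$. First I would dispose of the case $B_{\sigma'}\not\subseteq\ol{B_\sigma}$. On $B\times X$ one has $\Delta_{\sigma,\l}=\Delta_\sigma\boxtimes\Delta_\l$, so by \ref{lemExteriourProduct} the multiplicity $[\Delta_{\sigma,\l}:IC_{\sigma',\l'}]$ equals $[\Delta_\sigma:IC_{\sigma'}]\cdot[\Delta_\l:IC_{\l'}]$, whose first factor vanishes because the support $\ol{B_{\sigma'}}$ of $IC_{\sigma'}$ is not contained in $\ol{B_\sigma}$. On $E\underset{G}{\otimes} X$ the sheaf $\Delta_{\sigma,\l}$ is supported on $\ol{E_\sigma\underset{G}{\otimes} X_\l}$, whose image under the projection $E\underset{G}{\otimes} X\rar B$ lies in $\ol{B_\sigma}$; since that projection maps $E_{\sigma'}\underset{G}{\otimes} X_{\l'}$ onto $B_{\sigma'}\not\subseteq\ol{B_\sigma}$, the support $\ol{E_{\sigma'}\underset{G}{\otimes} X_{\l'}}$ of $IC_{\sigma',\l'}$ cannot be contained in that of $\Delta_{\sigma,\l}$, and so $[\Delta_{\sigma,\l}:IC_{\sigma',\l'}]=0$ here as well. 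Thus both sides vanish and we may assume $B_{\sigma'}\subseteq\ol{B_\sigma}$.

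Now, using the Zariski-local triviality of the principal $G$-bundle $\pi\colon E\rar B$, choose an open $U\subseteq B$ over which $E$ is trivial and with $U\cap B_{\sigma'}\neq\emptyset$; such a $U$ exists because $B$ is covered by trivializing opens and $B_{\sigma'}$ is nonempty. Since $B_\sigma$ is irreducible and dense in $\ol{B_\sigma}$, the nonempty open subset $U\cap\ol{B_\sigma}$ of $\ol{B_\sigma}$ meets $B_\sigma$, so $U\cap B_\sigma\neq\emptyset$ too. As $E$ is trivial over $U$, the open subvariety $\pi^{-1}(U)\underset{G}{\otimes} X\subseteq E\underset{G}{\otimes} X$ (open because $E\underset{G}{\otimes} X\rar B$ is a locally trivial bundle, \ref{PropBalancedProdExists}) is by \ref{lemStratOnBalancedProd} isomorphic, as a stratified variety, to $U\times X$ with the product stratification, and under this isomorphism $E_\sigma\underset{G}{\otimes} X_\l$ and $E_{\sigma'}\underset{G}{\otimes} X_{\l'}$ correspond to $(U\cap B_\sigma)\times X_\l$ and $(U\cap B_{\sigma'})\times X_{\l'}$, which are both nonempty. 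Applying \ref{lemLocMult} to the open $\pi^{-1}(U)\underset{G}{\otimes} X\subseteq E\underset{G}{\otimes} X$, and then a second time to the open $U\times X\subseteq B\times X$, shows that $[\Delta_{\sigma,\l}:IC_{\sigma',\l'}]$ computed in $\P_{\Sigma\times\Lambda}(E\underset{G}{\otimes} X,\K)$, in $\P_{\Sigma\times\Lambda}(U\times X,\K)$ and in $\P_{\Sigma\times\Lambda}(B\times X,\K)$ all agree, which is the assertion. The statement over $\F_q$ follows by the identical argument, since \ref{lemLocMult} and \ref{lemStratOnBalancedProd} both have $\F_q$-analogues.

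The only real subtlety is the bookkeeping step of producing a single trivializing open that meets both $B_\sigma$ and $B_{\sigma'}$ at once. This is exactly where one uses the hypothesis $B_{\sigma'}\subseteq\ol{B_\sigma}$ together with the density of the irreducible stratum $B_\sigma$ in its closure, and --- crucially --- the fact that in the present setting (varieties, not stacks) a principal $G$-bundle is trivial Zariski-locally rather than merely étale-locally. Granted that, the statement is a two-fold application of the locality of composition multiplicities, together with the observation that the balanced product of a trivial bundle with $X$ is just $B\times X$.
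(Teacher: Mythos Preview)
Your proof is correct and follows essentially the same route as the paper: reduce to a trivializing open of $B$ meeting both $B_\sigma$ and $B_{\sigma'}$, use $B_{\sigma'}\subseteq\ol{B_\sigma}$ plus density to get both intersections nonempty, and invoke \ref{lemLocMult}. Your write-up is in fact more careful than the paper's, since you explicitly dispose of the case $B_{\sigma'}\not\subseteq\ol{B_\sigma}$ and apply \ref{lemLocMult} on both $E\underset{G}{\otimes}X$ and $B\times X$ to close the loop, whereas the paper leaves these steps implicit.
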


\subsubsection{Locality of the BGS-condition}

\begin{lem}\label{LemProdBGSisBGS}
 Let $X,Y$ be varieties, which satisfy the BGS-condition. Then $X\times Y$ satisfies the BGS-condition as well.
Furthermore we have
$$wt(X\times Y)=wt(X)\cdot wt(Y)$$
in this case.
\begin{proof}
The six functors commute with $\boxtimes$ and we have $IC_\l \boxtimes IC_\mu=IC_{\l,\mu}$. 
Hence it is easy to check that $IC_{\l,\mu}$ has Tate cohomology sheaves along strata.

The claim about weights follows from the multiplicity formula 
$$[\Delta_\l : IC_\mu] \cdot [\Delta_{\l'} : IC_{\mu'}] = [\Delta_{\l,\l' }: IC_{\mu,\mu' }]$$
in \ref{lemExteriourProduct}.
\end{proof}
\end{lem}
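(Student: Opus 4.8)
The plan is to deduce both claims from the Künneth identity $IC_{(\l,\l')} = IC_\l \boxtimes IC_{\l'}$ of \ref{lemBoXProd} together with the compatibility of the six functors with external products. Throughout, the strata of $X \times Y$ are the products $X_\l \times Y_{\l'}$, with inclusion $l_{(\l,\l')} = l_\l \times l_{\l'}$ and $d_{(\l,\l')} = d_\l + d_{\l'}$; as in \ref{condTate} all sheaves carry their Weil structure over $\F_q$, and Tate twists add under $\boxtimes$ since Frobenius acts diagonally.

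For the BGS-condition, fix strata $X_\l \times Y_{\l'}$ and $X_\mu \times Y_{\mu'}$. Commuting $l_{(\mu,\mu')}^*$ past $\boxtimes$ gives
\[
l_{(\mu,\mu')}^*\, IC_{(\l,\l')} = (l_\mu^* IC_\l) \boxtimes (l_{\mu'}^* IC_{\l'}).
\]
As $\K$ is a field, the stalk of an external product is the ordinary tensor product of the stalks, so $\mathcal H^k$ of the right-hand side is $\bigoplus_{i+j=k} \mathcal H^i(l_\mu^* IC_\l) \boxtimes \mathcal H^j(l_{\mu'}^* IC_{\l'})$. By the BGS-condition for $X$ and for $Y$, each summand vanishes unless both $i + d_\l$ and $j + d_{\l'}$ are even, in which case it is a direct sum of copies of $\K(\tfrac{-d_\l - i}{2}) \boxtimes \K(\tfrac{-d_{\l'} - j}{2}) = \K(\tfrac{-d_{(\l,\l')} - k}{2})$. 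Hence $\mathcal H^k$ vanishes when $k + d_{(\l,\l')} = (i + d_\l) + (j + d_{\l'})$ is odd and is a sum of copies of $\K(\tfrac{-d_{(\l,\l')} - k}{2})$ otherwise --- exactly condition \ref{condTate} for $X \times Y$.

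For the weight formula, recall from \cite{janGrass} that for a BGS variety the set $wt$ is determined (after the technical refining) by the graded decomposition numbers $[\Delta_\l : IC_\mu]$: the exponents occurring in $wt$ are precisely the degrees in which these Laurent polynomials are supported. We have just shown that $X \times Y$ is BGS, and by \ref{lemExteriourProduct} the decomposition numbers multiply, $[\Delta_{(\l,\l')} : IC_{(\mu,\mu')}] = [\Delta_\l : IC_\mu]\cdot[\Delta_{\l'} : IC_{\mu'}]$; multiplication of Laurent polynomials adds exponents, whence $wt(X \times Y) = wt(X) \cdot wt(Y)$ in the monoid $\{1, \q, \q^2, \ldots\}$. (Alternatively one verifies that $P^X \boxtimes P^Y$ is a minimal projective generator for $X \times Y$ and concludes from the Frobenius-equivariant Künneth isomorphism $\mathrm{End}(P^X \boxtimes P^Y) = \mathrm{End}(P^X) \otimes \mathrm{End}(P^Y)$.) The stalk bookkeeping is routine; the point that needs care --- and where I would concentrate the effort --- is checking that the normalization and refining entering the definition of $wt$ in \cite{janGrass} are compatible with $\boxtimes$.
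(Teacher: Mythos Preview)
Your proof is correct and follows essentially the same route as the paper: both use $IC_{(\l,\l')}=IC_\l\boxtimes IC_{\l'}$ together with the compatibility of $l_{(\mu,\mu')}^*$ with $\boxtimes$ to verify the BGS-condition, and both deduce the weight identity from the multiplicity formula $[\Delta_\l : IC_\mu]\cdot[\Delta_{\l'} : IC_{\mu'}]=[\Delta_{(\l,\l')} : IC_{(\mu,\mu')}]$ of \ref{lemExteriourProduct}. You have simply written out the K\"unneth decomposition of the cohomology sheaves and the parity/Tate-twist bookkeeping that the paper leaves implicit under ``it is easy to check''.
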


\begin{lem}\label{lemCondBGSLocal}
 Let $X_0$ be a stratified variety. Let $X_0=\bigcup U_{i,0}$ be an open cover.
Then $X_0$ satisfies the BGS-condition if and only if every $U_{i,0}$ satisfies it.
\begin{proof}
If $X_0$ satisfies BGS-condition, then any open subset does as well. Now suppose that
all $U_{i,0}$ satisfy the BGS-condition. Given a sheaf $IC_{\mu,0}$ on $X_0$ and a stratum $X_{\l,0}$, we need to show that 
$$l_\l^* IC_{\mu,0}$$
has Tate cohomology sheaves. We know the cohomology sheaves of $l_\l^* IC_{\mu,0}$ are local systems. Hence they are determined 
by their restrictions to a dense open subset \cite[4.3.2]{BBD}. Since we find an $U_{i,0}$, which contains a dense subset of $X_{\l,0}$ we are done.
\end{proof}
\end{lem}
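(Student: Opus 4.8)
The plan is to reduce both implications to the single observation that the cohomology sheaves of $l_\mu^* IC_\lambda$ along a stratum are lisse, hence (over the field $\K$) determined by their restriction to any dense open of that stratum, together with the fact that pullback along an open immersion $j$ is $t$-exact and commutes with the intermediate extension $(-)_{!*}$.

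First I would treat the easy direction: if $X_0$ is BGS and $j:U_{i,0}\hookrightarrow X_0$ is one of the opens, then for every stratum $X_{\lambda,0}$ meeting $U_{i,0}$ one has $j^* IC_\lambda^{X_0}=IC_\lambda^{U_{i,0}}$, because $j^*$ commutes with $(-)_{!*}$ and $X_{\lambda,0}\cap U_{i,0}$ is the corresponding open stratum of $U_{i,0}$. Restricting the sheaf $\mathcal{H}^i(l_\mu^* IC_\lambda^{X_0})$ — which by hypothesis is a sum of copies of $\K(\tfrac{-d_\lambda-i}{2})$, or zero, according to the parity of $i+d_\lambda$ — along the open inclusion $X_{\mu,0}\cap U_{i,0}\hookrightarrow X_{\mu,0}$ gives exactly $\mathcal{H}^i(l_\mu^* IC_\lambda^{U_{i,0}})$, which is therefore still Tate of the prescribed weight and vanishes in the prescribed parity. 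Hence $U_{i,0}$ is BGS.

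For the substantive direction, assume every $U_{i,0}$ is BGS and fix $\lambda,\mu\in\Lambda$. If $X_{\mu,0}\not\subseteq\overline{X_{\lambda,0}}$ then $l_\mu^* IC_\lambda^{X_0}=0$ and there is nothing to check. Otherwise, choose a member $U_{i,0}$ of the cover with $U_{i,0}\cap X_{\mu,0}\neq\emptyset$. Since the strata are smooth and irreducible, $\overline{X_{\lambda,0}}$ is irreducible with $X_{\lambda,0}$ as a dense open subset; as $U_{i,0}\cap\overline{X_{\lambda,0}}$ contains the nonempty set $U_{i,0}\cap X_{\mu,0}$, it is a dense open of $\overline{X_{\lambda,0}}$ and so also meets $X_{\lambda,0}$. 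Consequently $IC_\lambda^{X_0}|_{U_{i,0}}=IC_\lambda^{U_{i,0}}$ as before, and the BGS condition for $U_{i,0}$ shows that $\mathcal{H}^i(l_\mu^* IC_\lambda^{X_0})$ restricted to the dense open $X_{\mu,0}\cap U_{i,0}$ of $X_{\mu,0}$ is a sum of copies of $\K(\tfrac{-d_\lambda-i}{2})$ (or is $0$). Because $\mathcal{H}^i(l_\mu^* IC_\lambda^{X_0})$ is a lisse $\K$-sheaf on the irreducible variety $X_{\mu,0}$, this forces it to be of that same form on all of $X_{\mu,0}$, using \cite[4.3.2]{BBD}. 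This gives the BGS condition for $X_0$.

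The one point that needs genuine care — and which I expect to be the main obstacle — is that one is working with Weil sheaves over $\F_q$, so "determined by a dense open" must be applied to the \emph{Frobenius structure}: one needs that a lisse Weil $\K$-sheaf on the smooth irreducible stratum $X_{\mu,0}$ whose restriction to a dense open is isomorphic to a direct sum of constant sheaves $\K(n)$ is itself such a sum. This rests on the surjectivity of the fundamental group of a dense open of a smooth irreducible variety onto that of the variety (so geometric constancy descends), on the absence of nontrivial extensions between distinct Tate twists, and on the fact that the odd-degree vanishing likewise propagates from the dense open; everything else in the argument is the formal bookkeeping of $t$-exactness of open pullback and the identity $j^*(-)_{!*}=(-)_{!*}j^*$.
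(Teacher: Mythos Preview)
Your argument is correct and follows the same route as the paper: both directions reduce to the fact that the cohomology sheaves of $l_\mu^* IC_\lambda$ are lisse on the irreducible stratum and hence determined by their restriction to any dense open, with \cite[4.3.2]{BBD} as the key input. Your write-up is considerably more detailed than the paper's---you make explicit the compatibility $j^*(-)_{!*}=(-)_{!*}j^*$, the check that a $U_{i,0}$ meeting the stratum $X_{\mu,0}$ also meets $X_{\lambda,0}$, and the Weil-sheaf issue that \cite[4.3.2]{BBD} must be read as applying to the Frobenius structure and not merely to the underlying geometric local system---but none of this changes the substance of the proof.
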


\begin{lem}\label{lemBalProdBGSCond}
 Let $(B,\Sigma)$ and $(X,\Lambda)$ be acyclically stratified varieties and $X$ be equipped with a compatible $G$-action.
Let $E\rar B$ be a principal $G$-bundle.
If $B$ and $X$ satisfy the BGS-condition, then so does $E\underset{G}{\otimes} X$.
Furthermore we have
$$wt(E\underset{G}{\otimes} X)=wt(B)\cdot wt(X)$$
in this case.\begin{proof}
If $E=B\times G$ we are done by \ref{LemProdBGSisBGS}. By \ref{lemCondBGSLocal} the BGS-condition can be checked locally,
as well as weights \ref{lemBalancedProdMult}.
\end{proof}
\end{lem}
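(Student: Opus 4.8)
The plan is to reduce to the case of a trivial bundle, where the balanced product is literally a product, and then globalize using that both the BGS-condition and the relevant Frobenius weights can be read off Zariski-locally on $B$. First I would treat the trivial bundle $E=G\times B$: here $E\underset{G}{\otimes} X=B\times X$ as stratified varieties by the remark following \ref{lemStratOnBalancedProd}, so Lemma \ref{LemProdBGSisBGS} immediately gives that $B\times X$ is BGS and that $wt(B\times X)=wt(B)\cdot wt(X)$.

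For a general principal $G$-bundle $E\rar B$ I would choose a Zariski open cover $B=\bigcup U_i$ over which $E$ trivializes. By \ref{lemStratOnBalancedProd} together with \ref{PropBalancedProdExists}, the preimages of the $U_i$ form an open cover of $E\underset{G}{\otimes} X$, and each such open is isomorphic, as a stratified variety, to $U_i\times X$. An open subvariety of $B$ is again BGS, so the trivial case shows each $U_i\times X$ is BGS; Lemma \ref{lemCondBGSLocal}, which lets one test the BGS-condition on the members of an open cover, then yields that $E\underset{G}{\otimes} X$ is BGS.

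It remains to compute the weights. Recall that $wt(E\underset{G}{\otimes} X)$ is extracted from the graded multiplicities $[\Delta_{\sigma,\l}:IC_{\sigma',\l'}]$ in $\P_{\Sigma\times\Lambda}(E\underset{G}{\otimes} X,\K)$ together with their lifts over $\F_q$. By \ref{lemBalancedProdMult} these multiplicities coincide with those in $\P_{\Sigma\times\Lambda}(B\times X,\K)$, and --- both spaces satisfying the BGS-condition --- they coincide as graded multiplicities. Hence $wt(E\underset{G}{\otimes} X)=wt(B\times X)=wt(B)\cdot wt(X)$, the last equality again by \ref{LemProdBGSisBGS}.

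The hard part will be the globalization step: one must ensure that \ref{lemCondBGSLocal} genuinely applies to the cover of $E\underset{G}{\otimes} X$ pulled back from the $U_i$ --- that the strata restrict correctly and that a dense subset of each stratum of $E\underset{G}{\otimes} X$ meets some trivializing chart --- which is exactly the content of \ref{lemStratOnBalancedProd}. A second, more arithmetic subtlety is that $wt$ is defined via a minimal projective generator equipped with a lift to $\F_q$, so the multiplicity comparison in \ref{lemBalancedProdMult} must carry the Frobenius-weight grading and not merely the ungraded multiplicities; this is precisely why the ``graded multiplicities'' clause, available only under the BGS-condition, is indispensable here.
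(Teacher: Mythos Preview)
Your argument is correct and follows exactly the same route as the paper: reduce to the trivial bundle via \ref{LemProdBGSisBGS}, then globalize the BGS-condition using \ref{lemCondBGSLocal} and the weights using \ref{lemBalancedProdMult}. Your write-up is simply a more detailed unpacking of the paper's terse proof, and your remarks on the subtleties (strata meeting trivializing charts, graded versus ungraded multiplicities) correctly identify where the cited lemmas are doing real work.
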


\section{Dg-algebras, bimodules and formality}
Given a $dg$-ring $A$, we denote by $Dg\-A$ the $dg$-category of right dg-modules over $A$. By $Hot\-A$ we denote its homotopy category and 
$Der\-A$ denotes the derived category.
\subsection{Bimodules}
For many purposes the correct notion of a morphism between rings $R$,$S$ is a $R-S$ bimodule. The analogue is true for $dg$-algebras.
Given two $dg$-rings $R,S$ and a $R-S$ $dg$-bimodule $B$, we obtain adjoint functors between their module categories, 
which induce adjoint functors between their derived categories:
$$
\begin{xy}
 \xymatrix{
Der \- R \ar@/^0.5cm/[r]^{ \_ \overset{L}{\underset{R}{\otimes}} B} & \ar@/^0.5cm/[l]^{RHom_S(B,\_)} Der\- S \\
}
\end{xy}
$$

We will notate this situation more concisely as 
$$
\begin{xy}
 \xymatrix{Der \-R \ar[r]^B & Der\-S
}
\end{xy}
$$
We will also abuse language and speak of bimodules instead of $dg$-bimodules.
If our bimodule is perfect over $S$, it restricts to a functor\footnote{The condition for the adjoint to restrict would be $Hom(B,S)$ perfect over 
$R$. We do not care about it.} between the perfect derived categories:
$$
\begin{xy}
 \xymatrix{per \-R \ar[r]^B & per\-S
}
\end{xy}
$$
\begin{facts}\label{FactsBimodFunctors}
\begin{itemize}
 \item 

 Given three $dg$-rings and bimodules between them 
$$
\begin{xy}
 \xymatrix{R \ar@{-}[r]^M & S \ar@{-}[r]^N & T
}
\end{xy}
$$
the diagram 
$$
\begin{xy}
 \xymatrix{Der\-R \ar[r]^M \ar[rd]_{N \overset{L}{\underset{S}{\otimes}} M} & Der\-S \ar[d]^N \\
 & Der \- T
}
\end{xy}
$$
commutes.

\item A map $M\rar N$ of $R-S$ bimodules is a quasi-isomorphism, if and only if the induced natural transformation 
$ (\bullet)\overset{L}{\underset{R}{\otimes}} M \rar (\bullet)\overset{L}{\underset{R}{\otimes}} N$ is an isomorphism.
\item In particular a quasi-isomorphism between dg-rings induces an equivalence of their derived categories.
\end{itemize}
\begin{proof}
 \cite[6.1 and 6.3]{KellerDerDg}  or \cite[8.1.2.4]{LurieHigherAlg} .
\end{proof}

\end{facts}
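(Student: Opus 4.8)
The plan is to deduce all three assertions from two standard facts about the derived tensor product of dg-modules: that $-\otimes^L_R M$ is computed, on a right dg-$R$-module $X$, by $pX\otimes_R M$ for a cofibrant (homotopically projective) resolution $pX\to X$, and that this construction preserves quasi-isomorphisms in each variable. For the first item the content is associativity. I would choose cofibrant resolutions $pM\to M$ and $pN\to N$ in the categories of $R$-$S$- and $S$-$T$-bimodules. A cofibrant bimodule is in particular cofibrant as a one-sided module, and the tensor product over $R$ of a cofibrant right $R$-module with a cofibrant $R$-$S$-bimodule is again cofibrant as a right $S$-module; hence $pX\otimes_R pM$ computes $X\otimes^L_R M$ and is cofibrant over $S$, so $(pX\otimes_R pM)\otimes_S pN$ computes $(X\otimes^L_R M)\otimes^L_S N$, while $pM\otimes_S pN$ is a cofibrant $R$-$T$-bimodule computing $M\otimes^L_S N$, so $pX\otimes_R(pM\otimes_S pN)$ computes $X\otimes^L_R(M\otimes^L_S N)$. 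The associativity isomorphism $(pX\otimes_R pM)\otimes_S pN\cong pX\otimes_R(pM\otimes_S pN)$ of complexes is natural in $X$ and supplies the claimed isotransformation.

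For the second item one direction is immediate from preservation of quasi-isomorphisms: a quasi-isomorphism $M\to N$ of $R$-$S$-bimodules induces, for every $X$, a quasi-isomorphism $X\otimes^L_R M\to X\otimes^L_R N$, i.e.\ an isomorphism of natural transformations. Conversely, evaluate the transformation at the free right $R$-module $X=R$; since $R$ is cofibrant over itself, $R\otimes^L_R M\simeq M$ and $R\otimes^L_R N\simeq N$ canonically, and under these identifications the transformation at $R$ becomes the original map $M\to N$, which must therefore be a quasi-isomorphism.

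For the third item, let $\phi\colon R\to S$ be a quasi-isomorphism of dg-rings and regard $S$ both as the $R$-$S$-bimodule ${}_R S_S$ (left action through $\phi$) and as the $S$-$R$-bimodule ${}_S S_R$ (right action through $\phi$); these represent, respectively, the base-change functor $-\otimes^L_R S\colon Der\-R\to Der\-S$ and the restriction functor along $\phi$ (which needs no derivation, $S$ being free as a one-sided module over itself). By the first item the composite $Der\-R\to Der\-S\to Der\-R$ is represented by the $R$-$R$-bimodule $S\otimes^L_S S\cong{}_R S_R$, and the composite $Der\-S\to Der\-R\to Der\-S$ by the $S$-$S$-bimodule $S\otimes^L_R S$. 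Now $\phi$ is a quasi-isomorphism $R\to{}_R S_R$ of $R$-$R$-bimodules, and applying $S\otimes^L_R(-)$ to the quasi-isomorphism $R\xrightarrow{\phi}S$ of right $R$-modules, then composing with the multiplication $S\otimes^L_R S\to S$, exhibits a quasi-isomorphism $S\otimes^L_R S\xrightarrow{\sim}{}_S S_S$ of $S$-$S$-bimodules. By the second item both composites are isomorphic to the respective identity functors, so the base-change and restriction functors are mutually inverse equivalences.

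I expect the only genuinely non-formal point to be the cofibrancy bookkeeping in the first item: one must know that cofibrancy of bimodules restricts to cofibrancy of one-sided modules and is preserved under the relevant tensor products, so that the two bracketings of $pX\otimes_R pM\otimes_S pN$ both compute derived tensor products. Equivalently one can avoid the model-categorical input by working with two-sided bar resolutions, which are manifestly functorial and strictly associative; in either guise this is exactly the content of the cited \cite[6.1 and 6.3]{KellerDerDg} and \cite[8.1.2.4]{LurieHigherAlg}.
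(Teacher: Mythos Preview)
Your argument is correct and in fact supplies a self-contained proof where the paper gives none: the paper's proof consists entirely of the citations \cite[6.1 and 6.3]{KellerDerDg} and \cite[8.1.2.4]{LurieHigherAlg}. Your approach---computing derived tensor products via cofibrant (or bar) resolutions, reading off associativity from the strict associativity of the underived tensor product on resolutions, and testing the natural transformation at $X=R$---is exactly the standard unpacking of those references, so there is no substantive divergence, only a difference in level of detail.

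One small slip worth fixing: in the third item you write ``applying $S\otimes^L_R(-)$ to the quasi-isomorphism $R\xrightarrow{\phi}S$ of right $R$-modules''. The functor $S\otimes^L_R(-)$ eats \emph{left} $R$-modules; what you mean is to regard $\phi$ as a map of left $R$-modules (or of $R$-$R$-bimodules) and then tensor. With that correction your two-out-of-three argument goes through: $S\otimes^L_R R\to S\otimes^L_R S$ is a quasi-isomorphism, its composite with the multiplication $\mu:S\otimes^L_R S\to S$ is the identity of $S$, hence $\mu$ is a quasi-isomorphism of $S$-$S$-bimodules, and item two finishes the job. The cofibrancy bookkeeping you flag in item one is indeed the only point requiring care, and your suggested fallback to two-sided bar resolutions is the cleanest way to make it rigorous without invoking model-categorical machinery.
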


Let $A$ be a dg-ring and $M$ be a right dg-module over $A$. Then we may form its endomorphisms dg-algebra $End(M)$
and $M$ becomes a $End(M)-A$ bimodule:
$$
\begin{xy}
 \xymatrix{
Der \- End(M)  \ar[rr]^M  && Der\- A \\
}
\end{xy}
$$
\begin{prop}\label{PropGenerator}
 Suppose, that $M$ belongs to the smallest full dg-subcategory of $Dg \-A$
which contains $A$ and is closed under shifts, mapping cones and direct summands.
Then $M$ is restricts to an equivalence as follows:
$$
\begin{xy}
 \xymatrix{Der \- End(M)  \ar[rr]^M && Der\- A\\
 per\-End(M) \ar[rr]^{\cong}   \ar[u]^{\subset} && \ar[u]^{\subset} \langle M \rangle^{\ominus}
}
\end{xy}
$$
Here $\langle M \rangle^{\ominus}$ denotes the thick subcategory generated by $M$. 
\begin{proof}
Indeed by Beilinson's lemma 
we only need to compute, that our functor is fully faithful on $End(M)$:
\begin{align*}
Hom_{Der\- End(M)}(End(M),End(M)[i])&=H^i(End(M))\\
&=Hom_{Hot\-A}(M,M[i])\\
&=Hom_{Der \- A}(M,M[i])
\end{align*}
The last equality follows from our assumption on $M$.
\end{proof}
\end{prop}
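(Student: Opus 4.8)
The plan is to apply Beilinson's lemma (often called the "Morita-type" recognition lemma for derived categories): a triangulated functor between triangulated categories that is fully faithful on a generating object and whose source is generated by that object, with idempotents split on both sides, restricts to an equivalence onto the thick subcategory generated by the image. So the whole proof reduces to the fully faithfulness computation on the single object $End(M)$, plus bookkeeping about which subcategories are hit.

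First I would recall that the bimodule $M$, viewed as an $End(M)$--$A$ bimodule, induces the functor $\_\overset{L}{\underset{End(M)}{\otimes}} M : Der\-End(M) \rar Der\-A$, and that this functor sends the free rank-one module $End(M)$ to $M$. Then I would compute, for all $i \in \Z$,
$$
Hom_{Der\-End(M)}(End(M), End(M)[i]) = H^i(End(M)) = Hom_{Hot\-A}(M,M[i]),
$$
the first equality being the standard description of maps out of a free module in a derived category of dg-modules, the second being the definition of the endomorphism dg-algebra. The key input is the hypothesis on $M$: since $M$ lies in the smallest full dg-subcategory of $Dg\-A$ containing $A$ and closed under shifts, cones and summands, $M$ is a perfect (compact, homotopically projective up to summands) dg-$A$-module, so $Hom_{Hot\-A}(M,M[i]) = Hom_{Der\-A}(M,M[i])$. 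Chaining these identifications shows the functor is fully faithful on the full subcategory with the single object $End(M)$.

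Next I would invoke Beilinson's lemma: the functor is exact (triangulated), $per\-End(M) = \langle End(M)\rangle^{\ominus}$ is the thick (idempotent-closed) subcategory of $Der\-End(M)$ generated by $End(M)$, and both categories are idempotent complete. Fully faithfulness on $End(M)$ together with compatibility with shifts and cones propagates to fully faithfulness on the thick subcategory generated by $End(M)$, i.e.\ on $per\-End(M)$. Its essential image is then the thick subcategory of $Der\-A$ generated by the image of $End(M)$, namely $\langle M \rangle^{\ominus}$. This gives the bottom equivalence $per\-End(M) \xrightarrow{\cong} \langle M\rangle^{\ominus}$, and the top row is just the ambient functor into which this equivalence embeds, so the square commutes by construction.

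The only genuinely delicate point — the "hard part" — is making sure the homotopy category and derived category agree on $M$, i.e.\ that the hypothesis on $M$ really forces $Hom_{Hot\-A}(M,M[i]) = Hom_{Der\-A}(M,M[i])$; this is where one uses that $A$ itself is homotopically projective as a module over itself and that the stated closure properties (shifts, cones, summands) are exactly those preserving homotopy-projectivity, so every such $M$ is homotopically projective. Everything else is a routine application of \ref{FactsBimodFunctors} (to identify the functor induced by the bimodule) and the idempotent-completeness of derived and perfect derived categories of dg-algebras. I would cite \cite{KellerDerDg} for the bimodule formalism and for Beilinson's lemma in the dg-setting.
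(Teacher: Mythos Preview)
Your proposal is correct and follows exactly the same route as the paper: invoke Beilinson's lemma to reduce to checking full faithfulness on the single object $End(M)$, then verify the chain $Hom_{Der\-End(M)}(End(M),End(M)[i]) = H^i(End(M)) = Hom_{Hot\-A}(M,M[i]) = Hom_{Der\-A}(M,M[i])$, with the last step using the hypothesis on $M$. The paper's proof is just a terser version of what you wrote.
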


\subsection{Formality}
Recall the notion of formality for $dg$-rings.
\begin{defi}
 Let $R$ be a $dg$-ring. We say that $R$ is formal, if there exists a 
chain of $dg$-ring quasi-isomorphisms connecting $R$ and $H(R)$:
$$R \lar R_1 \rar \ldots \lar R_n \rar H(R)$$
\end{defi}
Often we will abuse language and just say that $H(R)$ is formal, leaving $R$ implicit.
Consider the following problem: 
Let $R$ and $S$ be two formal dg-algebras and $M \in R\-Mod\-S$ be a bimodule. Then by abstract nonsense, the lower horizontal of the following 
diagram is again given by a bimodule $N$. What is this bimodule concretely?
$$
\begin{xy}
 \xymatrix{S \-per \ar[d] \ar[r]^M & R\-per \ar[d] \\
H(S) \-per \ar[r]^N & H(R) \-per
}
\end{xy}
$$
A first guess is $N=H(M)$. While this is wrong in general (not every bimodule is formal), it is true if purity arguments are applicable.
\begin{defi}
Let $k$ be a ring. Let $k$-gMod be the category of graded $k$-modules. Let $k$-dggMod be the category of cochain complexes over $k$-gMod.
We will refer to the two gradings as internal (resp.) cohomological grading.
\begin{itemize}
\item A differential graded algebra, $dgg$-algebra for short, is a monoid object in $k$-dggMod.
\item Let $R,S$ be two $dgg$-algebras. A $R-S$ $dgg$-bimodule is a bimodule object over the monoid objects $R,S$.
We will often be sloppy and use the term graded bimodule instead.
\item We call an object of $k$-dggMod pure (of weight $n$), if its $i$-th cohomology is concentrated in internal degree $i+n$ for all $i$.
\end{itemize}
\end{defi}
For $M\in k -dggMod$ pure of weight $0$, we denote by $S(M)$ the subobject, obtained by truncating away the degrees above the diagonal.
$$
S(M)=\left|
\begin{array}{ccc}
0 & 0 & Z^{1,1} \\
    \uparrow & \uparrow & \uparrow \\
0 & Z^{0,0} & M^{1,0} \\
    \uparrow & \uparrow & \uparrow \\
Z^{-1,-1} & M^{0,-1} & M^{1,-1} \\
\end{array}
\right|
$$
More precisely we have
$$S(M)^{ij}:=
\begin{cases}
 0 & \text{ for } i<j \\
 Z^{ij}:=\ker (d|_{M^{ij}}) & \text{ for }i=j\\
 M^{ij} & \text{ for }i>j
\end{cases}
$$
where $i$ is the internal degree and $j$ is the cohomological degree.
Observe that a pure $dgg$-algebra is automatically of weight zero and in this case $S(A)$ is a $dgg$-subalgebra.
\begin{prop}\label{PropDeligneFormality}
\begin{enumerate}

\item Let $A$ be a pure $dgg$-algebra. Then $A$ is formal. More precisely, the obvious maps are quasi-isomorphisms
of dg-algebras:
$$A \linj S(A) \surj H(A)$$ 
\item Let $A,B$ be two pure $dgg$-algebras and $M$ be a pure graded $A-B$ bimodule, which is perfect over $B$. Then the following diagram, 
whose verticals are equivalences, commutes:
 $$
\begin{xy}
 \xymatrix{A\-per  \ar[rr]^{M} && B\-per  \\
S(A)\-per \ar[u] \ar[d] \ar[rr]^{S(M)} && S(B) \-per \ar[u] \ar[d]\\
H(A)\-per  \ar[rr]^{H(M)} && H(B) \-per 
}
\end{xy}
$$
\end{enumerate}
\begin{proof}
\begin{enumerate}
 \item Straightforward, see for example \cite[Prop. 6]{OlafThesis}.
$$
\left|
\begin{array}{ccc}
R^{-1,1} & R^{0,1} & R^{1,1} \\
    \uparrow & \uparrow & \uparrow \\
R^{-1,0} & R^{0,0} & R^{1,0} \\
    \uparrow & \uparrow & \uparrow \\
R^{-1,-1} & R^{0,-1} & R^{1,-1} \\
\end{array}
\right|
\leftarrow
\left|
\begin{array}{ccc}
0 & 0 & Z^{1,1} \\
    \uparrow & \uparrow & \uparrow \\
0 & Z^{0,0} & R^{1,0} \\
    \uparrow & \uparrow & \uparrow \\
Z^{-1,-1} & R^{0,-1} & R^{1,-1} \\
\end{array}
\right|
\rightarrow
\left|
\begin{array}{ccc}
0 & 0 & H^{1,1} \\
    \uparrow & \uparrow & \uparrow \\
0 & H^{0,0} & 0 \\
    \uparrow & \uparrow & \uparrow \\
H^{-1,-1} & 0 & 0 \\
\end{array}
\right|
$$
\item Let us for example verify, that the square 
$$
\begin{xy}
 \xymatrix{A\-per  \ar[rr]^{M} && B\-per  \\
S(A)\-per \ar[u]^A \ar[rr]^{S(M)} && S(B) \-per \ar[u]^B \\
}
\end{xy}
$$
commutes. By \ref{FactsBimodFunctors} this amounts to constructing an isomorphism between 
$S(M)\otimes_{S(B)} B$ and  $A\underset{A}{\otimes} M$ in the derived category of bimodules. This can be done as follows:
$$S(M)\otimes_{S(B)} B \rar M\otimes_{S(B)} B \lar M \otimes_{S(B)} S(B) \rar M \lar A\underset{A}{\otimes} M$$
\end{enumerate}
\end{proof}
\end{prop}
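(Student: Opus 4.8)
The plan is to handle the two parts separately: part (1) is a bigraded bookkeeping argument, and part (2) follows formally from it together with Facts~\ref{FactsBimodFunctors}.

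For (1), I would first note that a pure $dgg$-algebra has weight zero: the unit lies in bidegree $(0,0)$ and is a cocycle by the Leibniz rule, forcing $H^0(A)$ to be nonzero in internal degree $0$, hence $n=0$. Now fix the internal degree $i$ and regard $A$ as a family of cochain complexes $A^{i,\bullet}$ in the cohomological direction. Purity says that $H^\bullet(A^{i,\bullet})$ is concentrated in cohomological degree $i$. By construction $S(A)^{i,\bullet}$ is precisely the canonical truncation $\tau^{\leq i}A^{i,\bullet}$, while $H(A)^{i,\bullet}$ is the cohomology of $A^{i,\bullet}$ placed in degree $i$; so $S(A)\inj A$ is a quasi-isomorphism in each internal degree, and $S(A)\surj H(A)$ has kernel the internal-degreewise acyclic complex consisting of the image of the differential on the diagonal together with everything strictly below it. It then remains to see that these are maps of $dgg$-algebras: a product of elements in bidegrees $(i,j)$, $(i',j')$ with $i\ge j$, $i'\ge j'$ lies in bidegree $(i+i',j+j')$, again on or below the diagonal, and on the diagonal a product of cocycles is a cocycle; this makes $S(A)$ a subalgebra — the one place where weight zero is essential — and the projection onto $H(A)$ is visibly multiplicative.

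For (2), the vertical functors are induced by the bimodules $A$ (over $S(A)\inj A$) and $H(A)$ (over $S(A)\surj H(A)$), and likewise for $B$; since these maps are quasi-isomorphisms of $dg$-rings by (1), Facts~\ref{FactsBimodFunctors} shows the corresponding functors are equivalences, and they restrict to the perfect subcategories because $M$ perfect over $B$ forces $S(M)$ perfect over $S(B)$ (perfectness is preserved under restriction along the quasi-isomorphism $S(B)\inj B$, and $S(M)\simeq M$ as $S(B)$-modules). To check that the top square commutes I would use Facts~\ref{FactsBimodFunctors} once more: the two composite functors are given by the bimodules $S(M)\otimes_{S(B)}B$ and $A\otimes_A M$, so it suffices to connect these by a chain of quasi-isomorphisms of bimodules, and
$$S(M)\otimes_{S(B)}B \rar M\otimes_{S(B)}B \lar M\otimes_{S(B)}S(B) \rar M \lar A\otimes_A M$$
does this: the first map is induced by $S(M)\inj M$, the second by $S(B)\inj B$, and the last two are the canonical isomorphisms, all tensor products being derived so that quasi-isomorphisms are preserved. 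The bottom square is handled identically, with $H(M)$, $H(B)$ and the projections in place of $S(M)$, $S(B)$ and the inclusions.

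I expect the only delicate point to be the bookkeeping in part (1) — namely that purity is exactly the hypothesis that makes $S(-)$ simultaneously multiplicative (hence a functor on algebras and bimodules) and a quasi-isomorphism — together with keeping track of the bimodule structures along the zigzag in part (2). There is no deeper obstacle; indeed a cleaner but less explicit route for (2) would be to observe that $S(A)\to A$ and $S(A)\to H(A)$ are quasi-isomorphisms of algebras compatible with the bimodule quasi-isomorphisms $S(M)\to M$ and $S(M)\to H(M)$, so the entire $3\times 2$ diagram $2$-commutes by functoriality of the assignment (ring, bimodule) $\mapsto$ (derived category, functor).
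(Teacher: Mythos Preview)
Your proposal is correct and follows essentially the same approach as the paper. The paper's proof of (1) is a one-line ``straightforward'' with a picture and a citation, which you have simply unpacked (your observation that $S(A)^{i,\bullet}=\tau^{\le i}A^{i,\bullet}$ is exactly what the picture encodes); for (2) you reproduce verbatim the paper's zigzag $S(M)\otimes_{S(B)}B \to M\otimes_{S(B)}B \leftarrow M\otimes_{S(B)}S(B) \to M \leftarrow A\otimes_A M$, and your additional remarks on perfectness and on the lower square fill in details the paper leaves implicit.
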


\subsection{Inverse limits of categories}\label{subsecInverseLimits}
In this subsection, we collect some technical properties of inverse limits of categories.
\begin{lem}\label{LemIdemLimit}
Let 
$$
\begin{xy}
 \xymatrix{\cal C_0 & \ar[l]_{F_1} \cal C_1 & \ar[l]_{F_2} \cal C_2 & \ar[l]_{F_3} \ldots
}
\end{xy}
 $$
be a sequence of additive categories and additive functors. Then $\varprojlim (\cal C_i)$ is additive.
\begin{enumerate}
 \item If the $\cal C_i$ are idempotent complete, then so is  $\varprojlim (\cal C_i)$.
\item Idempotent completion commutes with $\varprojlim$. More precisely
the canonical functor $idem(\varprojlim(\cal C_i)) \rar \varprojlim ( idem(\cal C_i))$ is an equivalence.
\end{enumerate}
\begin{proof}
 Straightforward.
\end{proof}
\end{lem}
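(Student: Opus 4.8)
\emph{Overall strategy.} I would deduce each assertion from its single-level analogue about one $\cal C_i$, propagated through the transition functors, using throughout that an additive functor commutes with finite biproducts and with the splitting of idempotents. Additivity is formal: $\mathrm{Hom}_{\varprojlim\cal C_i}$ is the subgroup of the product of the groups $\mathrm{Hom}_{\cal C_i}(X_i,X_i')$ cut out by the $\Z$-linear compatibility relations $\psi_i'\circ F_i(f_{i+1})=f_i\circ\psi_i$, hence an abelian group with bilinear composition; the family $(0)_i$, with the canonical isomorphisms $F_i(0)\cong 0$, is a zero object; and the biproduct is $(X_i\oplus X_i',\,\psi_i\oplus\psi_i')$, the structure maps remaining isomorphisms because $F_i(X_{i+1}\oplus X_{i+1}')\cong F_i(X_{i+1})\oplus F_i(X_{i+1}')$.

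\emph{Part (1).} An idempotent $\varepsilon=(\varepsilon_i)$ of $(X_i,\psi_i)$ amounts to idempotents $\varepsilon_i$ in $\cal C_i$ with $\psi_i\circ F_i(\varepsilon_{i+1})=\varepsilon_i\circ\psi_i$. I would split each as $\varepsilon_i=\iota_i\pi_i$ with $\pi_i\iota_i=\mathrm{id}_{Z_i}$, apply $F_i$ to split $F_i(\varepsilon_{i+1})$ through $F_i(Z_{i+1})$, and set
$$\chi_i:=\pi_i\circ\psi_i\circ F_i(\iota_{i+1})\colon F_i(Z_{i+1})\longrightarrow Z_i.$$
Feeding $\psi_i\circ F_i(\varepsilon_{i+1})=\varepsilon_i\circ\psi_i$ into the obvious candidates shows that $\chi_i$ is an isomorphism with inverse $F_i(\pi_{i+1})\circ\psi_i^{-1}\circ\iota_i$, and that $(\pi_i),(\iota_i)$ are morphisms of $\varprojlim\cal C_i$ exhibiting $(Z_i,\chi_i)$ as a splitting of $\varepsilon$; hence $\varprojlim\cal C_i$ is idempotent complete.

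\emph{Part (2).} The canonical functor $\Phi$ takes $((X_i,\psi_i),\varepsilon)$ to $((X_i,\varepsilon_i))_i$ with structure maps induced by the $\psi_i$ — these being isomorphisms in $idem(\cal C_i)$ exactly because $\psi_i$ conjugates $F_i(\varepsilon_{i+1})$ to $\varepsilon_i$. Full faithfulness of $\Phi$ is immediate: on either side a morphism between such data is a family $(f_i)$ in $\cal C_i$ with $\varepsilon_i'f_i=f_i\varepsilon_i=f_i$ and $\psi_i'\circ F_i(f_{i+1})=f_i\circ\psi_i$, so the $\mathrm{Hom}$-groups agree verbatim. For essential surjectivity I would use that both categories are idempotent complete — the target by part (1) applied to the idempotent complete $idem(\cal C_i)$, the source by construction — so that it is enough to show every object of $\varprojlim idem(\cal C_i)$ is a direct summand of one in the image of the functor $\varprojlim\cal C_i\rar\varprojlim idem(\cal C_i)$ induced by $\cal C_i\inj idem(\cal C_i)$. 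Granting this, an idempotent realising such a summand lifts along the fully faithful $\Phi$, splits in $idem(\varprojlim\cal C_i)$, and the split piece maps to the prescribed object.

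\emph{The hard part.} The one step above that is not pure unwinding is the summand claim just made: given $((X_i,e_i),\Psi_i)$ in $\varprojlim idem(\cal C_i)$, produce an honest object $(Y_i,\chi_i)$ of $\varprojlim\cal C_i$ retracting onto it. The plan is to split the comparison isomorphisms $\Psi_i$ levelwise and glue the resulting ambient $\cal C_i$-objects, together with the complements of the $e_i$, into a compatible family with honest structure isomorphisms; one needs only a retraction, not an isomorphism, which is what lets the gluing succeed. This bookkeeping is what I expect to be delicate. Note that in every situation in which this paper invokes the lemma the categories $\cal C_i$ (for instance $D^b_\Lambda(\ol X_n,\E)$) are already idempotent complete, whence $idem(\cal C_i)\cong\cal C_i$ and this step is vacuous — part (2) then being immediate from part (1) together with full faithfulness.
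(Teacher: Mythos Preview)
The paper's own proof is the single word ``Straightforward'', so your proposal is already far more detailed than what you are being compared against. Your treatment of additivity and of part~(1) is correct and is exactly how one would fill in the details.

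For part~(2) you have correctly reduced essential surjectivity to the claim that every object of $\varprojlim idem(\cal C_i)$ is a retract of one coming from $\varprojlim \cal C_i$, and you are right that this is the only non-formal step. However, your ``plan'' for it --- splitting the $\Psi_i$ and gluing ambient objects together with complements --- is not yet an argument: the complements $(X_i,1-e_i)$ and $(F_iX_{i+1},1-F_ie_{i+1})$ need not be related by the $\Psi_i$ at all, so there is no evident way to assemble the $X_i$ (or finite sums built from them) into an object of $\varprojlim \cal C_i$ with honest transition isomorphisms. You have located the difficulty but not resolved it; since the paper does not resolve it either, this is perhaps a fair place to stop, but you should not describe it as mere ``bookkeeping''.

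One correction to your closing remark: it is not true that every invocation of the lemma in the paper has the $\cal C_i$ already idempotent complete. In Proposition~\ref{PropPerStetig} the lemma is applied with $\cal C_i=\langle H_i\rangle$, the smallest triangulated subcategory of $Der\,H_i$ containing $H_i$; these are \emph{not} assumed idempotent complete, and indeed the whole point of that proposition is to pass from $\langle H_i\rangle$ to its idempotent completion $per\,H_i$. So part~(2) is genuinely used in its general form, and your escape hatch does not apply.
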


\begin{prop}\label{PropPerStetig}
Given a sequence 
$$
\begin{xy}
 \xymatrix{H_0 & \ar[l]_{\psi_1} H_1 & \ar[l]_{\psi_2} H_2 & \ar[l]_{\psi_3} \ldots
}
\end{xy}
 $$
of non-negatively graded algebras, considered as dg-algebras. Assume that 
$\psi_i$ is an isomorphism below degree $i$ and that the degree $0$ part of each algebra is 
a finite product of copies of $\E$. 
Then the canonical functor
\begin{equation}\label{eq1PropPerStetig}
per\-(\varprojlim H_i) \rar \varprojlim (per\-H_i)
\end{equation}
gives a triangulated equivalence.
\begin{proof}
In \cite[86]{OlafThesis} it is proven that 
\begin{equation}\label{eq2PropPerStetig}
\langle \varprojlim H_i \rangle \rar \varprojlim \langle H_i \rangle
\end{equation}
is an equivalence, where $\langle A \rangle \subset Der\-A$ denotes the smallest triangulated subcategory 
containing $A$ as usual.
Since \ref{eq1PropPerStetig} is the idempotent completion of \ref{eq2PropPerStetig} we are done.
\end{proof}
\end{prop}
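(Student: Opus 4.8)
The plan is to reduce to the triangulated-subcategory version of the statement, which is Schn\"urer's \cite[86]{OlafThesis}, and then pass to idempotent completions on both sides. Recall that for any dg-algebra $A$ one has $per\-A=\langle A\rangle^{\ominus}$, the thick subcategory of $Der\-A$ generated by the free rank-one module, and that $\langle A\rangle^{\ominus}$ is nothing but the idempotent completion of the triangulated subcategory $\langle A\rangle$ (the summand-closure of a triangulated subcategory inside an idempotent complete triangulated category realizes its Karoubi envelope). Applying this to $A=\varprojlim H_i$ gives $per\-(\varprojlim H_i)=idem(\langle\varprojlim H_i\rangle)$, and applying it to each $H_i$ gives $per\-H_i=idem(\langle H_i\rangle)$. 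Before anything else one should note that $\varprojlim H_i$ is again a non-negatively graded algebra whose degree-$0$ part is a finite product of copies of $\E$: since $\psi_i$ is an isomorphism below degree $i$, the tower stabilizes in each fixed degree, so $(\varprojlim H_i)_d=(H_j)_d$ for $j>d$.

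Next I would invoke \cite[86]{OlafThesis}: under exactly the hypotheses assumed here, the canonical comparison functor \ref{eq2PropPerStetig}, namely $\langle\varprojlim H_i\rangle\rar\varprojlim\langle H_i\rangle$, is a triangulated equivalence. An equivalence of categories induces an equivalence of idempotent completions, so $idem(\langle\varprojlim H_i\rangle)\rar idem(\varprojlim\langle H_i\rangle)$ is an equivalence. Its source is $per\-(\varprojlim H_i)$ by the previous paragraph; for its target I would apply \ref{LemIdemLimit}(2), that idempotent completion commutes with inverse limits of categories, to get $idem(\varprojlim\langle H_i\rangle)\cong\varprojlim idem(\langle H_i\rangle)=\varprojlim(per\-H_i)$.

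The step requiring genuine care --- and the main obstacle --- is to verify that the equivalence thus assembled is literally the canonical functor \ref{eq1PropPerStetig}, not merely some equivalence between the same pair of categories. This is a diagram chase. The functor \ref{eq1PropPerStetig} is slotwise base change along the structure maps $\varprojlim H_i\rar H_i$, i.e.\ $M\mapsto(M\otimes_{\varprojlim H_i}H_i)_i$; it sends the free module to the free module, so it carries $\langle\varprojlim H_i\rangle$ into $\varprojlim\langle H_i\rangle$ and restricts there to \ref{eq2PropPerStetig}. Since the source and target of \ref{eq1PropPerStetig} are exactly the idempotent completions of these two subcategories --- by $per\-A=\langle A\rangle^{\ominus}$ on the left and by \ref{LemIdemLimit}(2) on the right --- the functor \ref{eq1PropPerStetig} must coincide with $idem$ of \ref{eq2PropPerStetig}, hence is an equivalence. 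This finishes the proof.
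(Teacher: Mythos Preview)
Your proposal is correct and follows exactly the paper's approach: invoke \cite[86]{OlafThesis} for the equivalence on the triangulated subcategories $\langle A\rangle$, then pass to idempotent completions. You have simply unpacked what the paper compresses into a single sentence, in particular making explicit the use of \ref{LemIdemLimit}(2) to identify $\varprojlim(per\-H_i)$ with $idem(\varprojlim\langle H_i\rangle)$ and checking that the resulting equivalence is indeed the canonical comparison functor.
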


\section{Limiting step}
Recall that we want to understand $D^b_{G,\Lambda}(X,\E)$ by using the formula 
$$\varprojlim D^b_{\Lambda}(\ol X_n,\E) \cong D^b_{G,\Lambda}(X,\E)$$
In order to do so, we need to describe the transition functors 
$$i^*:D^b_{\Lambda}(\ol X_{n+1},\E)\rar D^b_{\Lambda}(\ol X_n,\E)$$
more explicitly.

Let $R$ be a ring equipped with an endomorphism $\phi$. We denote by $Mod\-(R,\phi)$ the category whose objects 
consist of pairs $(M,\phi)$, where $M$ is an right module over $R$ and $\phi:M\rar M$ is an additive map, which satisfies
$\phi(mr)=\phi(m)\phi(r)$. Typical examples will be obtained as follows: $R=End(P)$ and $M=Hom(P,\cal M)$, 
for suitable perverse sheaves $P_0,\cal M_0$. Both $R$ and $M$ are equipped with the frobenius action in this case.

\begin{obs}\label{ObsResProj}
 Let $i:X\inj X'$ be an inclusion of acyclically stratified varieties such that $X'$ is $IC^\O$-parity. 
Let $P=\bigoplus P_\l$ and $P'=\bigoplus P'_{\l'}$ be our usual projective generators, as
obtained in \cite{janGrass}  . 
Recall that their construction went by starting with projectives
on small varieties and extending them to bigger varieties. In particular we have $P=i^* P'$ by construction. 
Moreover if $i$ is defined over $\F_q$, for any
lift $P_0$, we can choose $P'_0$ such that $P_0=i^* P'_0$. 
In particular, we obtain a map $End(P')\rar End(P)$, which is compatible with the Frobenius action.
\end{obs}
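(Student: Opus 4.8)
The plan is to unwind the construction of the minimal projective generators carried out in \cite{janGrass} and to observe that every step of it is compatible with the pullback $i^*$. Recall that on an acyclically stratified variety satisfying $IC^\O$-parity the projective cover $P_\l \surj IC_\l$ is produced by an inductive procedure along the strata: one enumerates the strata so that each new one is open in the union of those already treated, and at each step one extends the projectives built so far across the new stratum by a universal extension, the $IC^\O$-parity hypothesis being exactly what makes the relevant $Ext$-groups behave as required. Hence $P = \bigoplus_\l P_\l$, and --- when everything is defined over $\F_q$ --- its lift $P_0 = \bigoplus_\l P_{\l,0}$, are determined by the finite list of choices made at the successive strata.

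Next I would feed the inclusion $i\colon X\inj X'$ into this bookkeeping. Since $i$ is a normally smooth inclusion, \ref{PropICclosed} shows that $i^*$ sends $IC$-sheaves on $X'$ to $IC$-sheaves on $X$ up to a shift; as the analogous statement for $i^!$ holds up to an even shift, the stalk and costalk conditions are preserved and $X$ again satisfies $IC^\O$-parity, so that $P$ and $P_0$ make sense. Now carry out the construction on $X'$ in the following order: first perform the construction on the strata of $X$, fixing the choices that produce $P$ (resp. $P_0$) there, and only afterwards extend these choices to the remaining strata of $X'$. Because $i^*$ commutes with the pushforwards, truncations and universal extensions used in the inductive step, restricting the $X'$-construction along $i$ reproduces the $X$-construction stratum by stratum; this yields $P'$ (resp. $P'_0$) with $i^*P' = P$ (resp. $i^*P'_0 = P_0$). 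Applying the functor $i^*$ to endomorphisms then gives a ring homomorphism
$$ End(P') \longrightarrow End(i^*P') = End(P), $$
which, $i$ being defined over $\F_q$ and the Frobenius structures on both rings being the ones induced by $P'_0$ and $P_0 = i^*P'_0$, intertwines the two Frobenius endomorphisms; this is the asserted map of pairs.

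The step I expect to need the most care is the reordering in the middle paragraph: the construction of $P$ and $P_0$ in \cite{janGrass} involves genuinely non-canonical choices, so the assertion that one "can choose $P'_0$ with $P_0 = i^*P'_0$" is precisely the statement that a complete set of choices on $X$ can always be prolonged to one on $X'$. This compatibility is exactly what the inductive, stratum-by-stratum design of the construction in \cite{janGrass} provides, and rather than reproduce that argument I would invoke it directly.
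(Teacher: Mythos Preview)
The paper gives no separate proof here: the observation is its own justification, pointing to the inductive construction in \cite{janGrass} and declaring $P=i^*P'$ ``by construction''. Your elaboration --- run the stratum-by-stratum construction on $X'$ so that the strata of $X$ are exhausted first, then note that $i^*$ is compatible with each step --- is exactly what that phrase means, and the final paragraph on choices is the right point to emphasise.

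There is, however, one slip to correct. You assert that $i$ is a normally smooth inclusion and invoke \ref{PropICclosed}, but the observation carries no such hypothesis. In this paper ``inclusion of acyclically stratified varieties'' means that $X$ is a closed union of strata of $X'$ with the induced stratification (compare the approximation axioms and the model case $\mathbb P^n\inj\mathbb P^{n+1}$); the normally smooth condition is a \emph{separate} requirement, imposed in the definition of an approximation only on closures of the coarse $\Lambda$-strata, and it enters later in \ref{thmFormalityInclusion} for a different purpose. Here it is both unjustified and unnecessary: once $X$ is a closed union of strata, $IC^\O$-parity of $X$ is immediate, since for $\l\in\wt\Lambda$ the sheaf $IC^X_\l$ is literally $i^*IC^{X'}_\l$ with the same stalks and costalks along the strata contained in $X$. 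With that detour removed, your reordering argument goes through verbatim; note also that for $\l'\in\wt\Lambda'\setminus\wt\Lambda$ each $\Delta_{\mu'}$ occurring in the standard filtration of $P'_{\l'}$ has $\mu'\in\wt\Lambda'\setminus\wt\Lambda$ as well, so $i^*P'_{\l'}=0$ and the summands match up as required.
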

This observation allows to translate the functors $i_*$ and $i^*$ nicely:
\begin{lem}\label{LemProjectivesAndFunctors}
Let $X\inj X'$ be an inclusion of acyclically stratified varieties. Assume that $X'$ is $IC^\O$-parity.
\begin{itemize}
 \item Then the following diagram of adjunctions commutes:
\footnote{More precisely, by ``commuting diagram of adjunctions'' we only
mean that there are natural isomorphisms
\begin{align*}
 |Hom(P,\_)| & \cong Hom(P',i_* \_) \\
 Hom(P,Li^* \_) & \cong End(P) \overset{L}{\underset{End(P')}{\otimes}} Hom(P',i_* \_)
\end{align*}
We do not claim that the adjunctions are equivalences.
} 
$$
\begin{xy}
 \xymatrix{D^b(\P_{\Lambda'}(X',\E)) \ar[dd]_{Hom(P',\_)} \ar@/^0.5cm/[r]^{Li^*} & \ar@/^0.5cm/[l]^{i_*}   D^b(\P_{\Lambda}(X,\E)) \ar[dd]^{Hom(P,\_)} \\
& \\
D^b(mod \-End(P')) \ar@/^0.5cm/[r]^{End(P) \overset{L}{\underset{End(P')}{\otimes}} \_} & \ar@/^0.5cm/[l]^{|.|} D^b(mod \- End(P))
}
\end{xy}
$$
\item Assume that the inclusion $X\inj X'$ comes from $X_0\inj X'_0$. Then the following diagram of adjunctions commutes:
$$
\begin{xy}
 \xymatrix{D^b(\P_{\Lambda'}(X'_0,\E)) \ar[dd]_{Hom(P',\_)} \ar@/^0.5cm/[r]^{Li^*} & \ar@/^0.5cm/[l]^{i_*}   D^b(\P_{\Lambda}(X_0,\E)) \ar[dd]^{Hom(P,\_)} \\
& \\
D^b(mod \-(End(P'),Fr)) \ar@/^0.5cm/[r]^{End(P) \overset{L}{\underset{End(P')}{\otimes}} \_} & \ar@/^0.5cm/[l]^{|.|} D^b(mod \- (End(P),Fr))
}
\end{xy}
$$
\end{itemize}
\begin{proof}
Indeed we compute:
$$
 |Hom(P,\_)| =Hom(i^* P', \_) = Hom(P',i_* \_) 	    
$$
and these isomorphisms are compatible with the module structures and Frobenius action.
The rest follows from adjunction properties.
\end{proof}
\end{lem}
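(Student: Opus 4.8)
The plan is to establish the ``right-adjoint square'' (the one with horizontals $i_*$ and $|.|$) by a direct computation, and then deduce the ``left-adjoint square'' (horizontals $Li^*$ and $End(P)\overset{L}{\underset{End(P')}{\otimes}}\_$) formally by passing to adjoints. Throughout I would use that the vertical functors are equivalences of triangulated categories, since $P'$ and $P$ are projective generators of $\P_{\Lambda'}(X',\E)$ resp.\ $\P_{\Lambda}(X,\E)$ (see \cite{janGrass}); write $\Phi':=Hom(P',\_)$ and $\Phi:=Hom(P,\_)$.

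First I would treat the square with horizontals $i_*$ and $|.|$. Since $i$ is a closed immersion, $i_*$ restricts to an exact functor $\P_{\Lambda}(X,\E)\to\P_{\Lambda'}(X',\E)$ and needs no derived correction; $\Phi,\Phi'$ are exact because $P,P'$ are projective; and $|.|$ is restriction of scalars along the ring map $End(P')\to End(P)$ of \ref{ObsResProj}, hence exact. So it suffices to exhibit a natural isomorphism of functors $\P_{\Lambda}(X,\E)\to mod\-End(P')$,
$$|Hom(P,\_)|\;\cong\;Hom(P',i_*\_).$$
By \ref{ObsResProj} one has $P=i^*P'$, so $Hom(P,\mathcal F)=Hom_X(i^*P',\mathcal F)$, and the $(i^*,i_*)$-adjunction gives a natural isomorphism $Hom_X(i^*P',\mathcal F)\cong Hom_{X'}(P',i_*\mathcal F)$. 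The one point that needs care is $End(P')$-linearity for the action restricted along $End(P')\to End(P)$: but that ring map is, by the construction in \ref{ObsResProj}, literally ``apply $i^*$'', so precomposition by $\phi\in End(P')$ on $Hom_{X'}(P',i_*\mathcal F)$ corresponds under the adjunction to precomposition by $i^*\phi$ on $Hom_X(i^*P',\mathcal F)$, i.e.\ to the restricted action. I expect this module-linearity check to be the only genuine content of the proof; it is short once \ref{ObsResProj} is in hand, and the rest of this paragraph is exactness bookkeeping.

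Next I would deduce the square with horizontals $Li^*$ and $End(P)\overset{L}{\underset{End(P')}{\otimes}}\_$. The isomorphism just proved reads $\Phi'\circ i_*\cong|.|\circ\Phi$, equivalently $i_*\cong(\Phi')^{-1}\circ|.|\circ\Phi$. Taking left adjoints — using $Li^*\dashv i_*$, the adjunction $\bigl(End(P)\overset{L}{\underset{End(P')}{\otimes}}\_\bigr)\dashv|.|$, and that an equivalence is left adjoint to its quasi-inverse — yields $Li^*\cong\Phi^{-1}\circ\bigl(End(P)\overset{L}{\underset{End(P')}{\otimes}}\_\bigr)\circ\Phi'$, which is precisely the asserted identity $Hom(P,Li^*\_)\cong End(P)\overset{L}{\underset{End(P')}{\otimes}}Hom(P',\_)$. (Equivalently, invoke the standard fact that a natural isomorphism of functors induces a natural isomorphism of their left adjoints wherever the latter exist.)

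Finally, the Frobenius-equivariant statement requires no new idea. By the last sentence of \ref{ObsResProj} the lift may be chosen as $P_0=i^*P'_0$, so $End(P')\to End(P)$ intertwines the two Frobenius actions, and every isomorphism used above — the identification $P=i^*P'$, the $(i^*,i_*)$-adjunction, and hence the natural isomorphisms they produce — is compatible with Frobenius. Repeating the argument inside the categories $mod\-(End(P'),Fr)$ and $mod\-(End(P),Fr)$ therefore gives the second diagram. The passage to left adjoints and the passage to the Frobenius setting are both routine; the module-linearity check of the second paragraph is the main obstacle.
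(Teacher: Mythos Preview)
Your proposal is correct and follows essentially the same route as the paper: establish $|Hom(P,\_)|\cong Hom(P',i_*\_)$ via $P=i^*P'$ (from \ref{ObsResProj}) and the $(i^*,i_*)$-adjunction, check $End(P')$-linearity and Frobenius compatibility, and then obtain the left-adjoint square formally by passing to left adjoints. The paper's proof records exactly this in two lines; your version simply unpacks the phrases ``compatible with the module structures'' and ``the rest follows from adjunction properties'' in more detail.
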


Let us consider triples $(X,\Lambda,\wt \Lambda)$, where $(X,\Lambda)$ is 
a stratification and $(X,\wt \Lambda)$ is a refinement into an acyclic stratification. 

Given such a triple and $m\in \Z$ we denote by $IC_\Lambda:=IC_\Lambda^\E:=\bigoplus_{\l \in \Lambda} IC_\l^\E[m]$
the direct sum of $IC$-sheaves, normalized to perverse degree $-m$.

Now let $(X,\Lambda,\wt \Lambda)$ and $(X',\Lambda,\wt \Lambda')$ be two such triples (same $\Lambda$!).
We will sometimes use notations $\cal F$ and $\cal F'$ to distinguish sheaves on $X$ and $X'$.
For example $IC'_\l$ denotes an $IC$-sheaf on $X'$, while $IC_\l$ lives on $X$ etc.

Let $i:X\inj X'$ be a closed inclusion, such that 
\begin{itemize}
 \item $i:(X,\wt \Lambda) \rar (X',\wt\Lambda')$ is a map of acyclically stratified varieties.
\item For each $\l \in \Lambda$ restriction gives a normally smooth inclusion of codimension $c$ between
strata closures:
$$\ol{X_\l} \inj \ol{X'_\l}$$
\end{itemize}
Observe that under these conditions we have $i^*IC'_\l =IC_\l[c]$ for all $\l \in \Lambda$
and hence 
$$i^*IC'_\Lambda=IC_\Lambda$$
by \ref{PropICclosed}.
Here and in future our normalizations are $m:=0$ and $m':=-c$. In other words
$IC_\Lambda$ and $IC'_\Lambda[c]$ are perverse.
\begin{thm}\label{thmFormalityInclusion}
Let $X \inj X'$ be as above and assume, that everything\footnote{$X_0\inj X_0'$ and all stratifications.} 
is defined over $\F_q$ such that $(X',\wt \Lambda')$ satisfies the BGS-condition.
Suppose in addition that $IC^\O_{\l'}$ is parity for all $\l'\in \wt \Lambda'$ and that $wt(X')$ is separated.
Then we have a commutative diagram:
$$
\begin{xy}
 \xymatrix{D^b_{\Lambda}(X',\E)  \ar[rr]^{i^*}  && D^b_{\Lambda}(X,\E)  \\
\langle IC'_{\Lambda} \rangle \ar[rr]^{i^*} \ar[u]^{\subset} \ar[d]^{\cong} && \langle IC_{\Lambda} \rangle \ar[d]^{\cong} \ar[u]^{\subset}\\
per \-Ext^\bullet(IC'_{\Lambda}) \ar[rr]^{Ext^\bullet(IC_\Lambda)} && per \-Ext^\bullet(IC_{\Lambda}) \\
}
\end{xy}
$$
Here the lower verticals are equivalences to be constructed and the bottom horizontal is extension of scalars (in bimodule notation).
\begin{proof}
This proof is essentially \cite[Chapter 3]{OlafThesis} . We reproduce here a variant of it.

\begin{lem}\label{LemHugeDiag}
 The following diagram commutes:
$$
\begin{xy}
 \xymatrix{
D^b_{\wt \Lambda'}(X',\E)  \ar[rrrr]^{i^*}  &&&& D^b_{\wt \Lambda}(X,\E) \\
D^b(\P_{\wt \Lambda'}(X',\E)) \ar[d]_{Hom(P',\_)} \ar[u]^{real} \ar[rrrr]^{Li^*}  &&&& \ar[d]^{Hom(P,\_)} \ar[u]_{real} D^b(\P_{\wt \Lambda}(X,\E)) \\
per \-A'  \ar[rrrr]^{A}&&&&  per \- A \\
per \-End(L'^\bullet) \ar[u]^{L'^\bullet} \ar[rrrr]^{Hom(L^\bullet, L'^\bullet \underset{A'}{\otimes} A)}  &&&& per \-End(L^\bullet) \ar[u]_{L^\bullet}\\
per \-SEnd(L'^\bullet) \ar[d]_{HEnd(L'^\bullet)} \ar[u]^{End(L'^\bullet)} \ar[rrrr]^{SHom(L^\bullet, L'^\bullet \underset{A'}{\otimes} A)}  &&&& per \-SEnd(L^\bullet) \ar[d]^{HEnd(L^\bullet)} \ar[u]_{End(L^\bullet)}\\
per \-HEnd(L'^\bullet)  \ar[rrrr]^{HHom(L^\bullet, L'^\bullet \underset{A'}{\otimes} A)}  &&&& per \-HEnd(L^\bullet) \\
per \-Ext^\bullet(L') \ar@{=}[u]  \ar[rrrr]^{Ext^\bullet(L,L'\underset{A'}{\otimes} A)} &&&& \ar@{=}[u] per \-Ext^\bullet(L) \\
per \-Ext^\bullet(L') \ar@{=}[u]  \ar[rrrr]^{Ext^\bullet(L,L)} &&&& \ar@{=}[u] per \-Ext^\bullet(L) \\
}
\end{xy}
$$
Here we used the following notation:
\begin{itemize}
\item $P_0=i^*P'_0$ and $P'_0$ are quasi-projective generators of $\P_{\Lambda}(X,\E)$ and $\P_{\Lambda'}(X',\E)$ as obtained in \cite{janGrass}
.
\item $A:=End(P)$ and $A':=End(P')$.
\item $L:=Hom(P,IC_{\Lambda})$ and $L'=Hom(P',IC_{\Lambda}')$ correspond to $IC_\Lambda$ and $IC_\Lambda'$.
\item $L^\bullet$ and $L'^\bullet$ denote say finite resolutions by graded finitely generated projectives of $L,L'$.
\end{itemize}

\begin{proof}[Proof of \ref{LemHugeDiag}]
We need to check, that all squares commute. We proceed from top to bottom.
For the first square commutativity follows from \cite[A.7.1.]{DbPerv}.

The second square is covered in \ref{LemProjectivesAndFunctors}. Keep in mind, 
that $\P_{\wt \Lambda}(X,\E)$ has finite cohomological dimension 
, so $per\-End(P)=D^b(mod \-End(P))$.

All other squares can be interpreted as bimodules between perfect derived categories.
First of all we should convince ourselves, that 
that every bimodule in the diagram is perfect over its target $dg$-algebra. This is not hard to check,
most bimodules are quasi-isomorphic to their target dg-algebra anyway.

For the third square, we need to show that the evaluation map 
$$Hom(L^\bullet, L'^\bullet \underset{A'}{\otimes} A) \otimes_{End(L^\bullet)} L^\bullet \rar L'^\bullet \underset{A'}{\otimes} A$$
is a quasi-isomorphism. By \ref{PropICclosed} we have $L\cong L' \underset{A'}{\otimes}^L A$, which implies that 
$L'^\bullet \underset{A'}{\otimes} A $ and  $L^\bullet$ are both projective resolutions of the same object. Hence 
there is a homotopy equivalence $\phi:L'^\bullet \underset{A'}{\otimes} A \rar L^\bullet $.
It gives rise to a commutative diagram:
$$
\begin{xy}
 \xymatrix{Hom(L^\bullet, L'^\bullet \underset{A'}{\otimes} A) \otimes_{End(L^\bullet)} L^\bullet \ar[d]_{(\phi \circ \_)\otimes id} \ar[r] & L'^\bullet \underset{A'}{\otimes} A \ar[d]^{\phi} \\
Hom(L^\bullet, L^\bullet) \otimes_{End(L^\bullet)} L^\bullet \ar[r] & L^\bullet
}
\end{xy}
$$
Now both verticals are homotopy equivalences, since $\phi$ is and the bottom horizontal is an isomorphism anyway. It follows that 
upper horizontal is a homotopy equivalence.

The fourth and fifth square are covered by \ref{PropDeligneFormality}.

The sixth square is straightforward.

For the seventh square one uses again $L'\underset{A'}{\otimes} A \cong L$ by \ref{PropICclosed}.
\end{proof}
\end{lem}

Now the horizontals of all squares but the third are equivalences by \ref{PropGenerator},\ref{PropDeligneFormality} and \cite[2.3.4.]{RSW}.
By \ref{PropGenerator} the horizontals of the third square still induce 
equivalence between the categories corresponding to $\langle IC_\Lambda' \rangle $ resp. $\langle IC_\Lambda  \rangle$.
\end{proof}
\end{thm}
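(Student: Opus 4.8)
The plan is to turn the entire diagram into algebra: first build the two vertical equivalences out of the non-equivariant formality machinery, then identify the top horizontal $i^*$ with an explicit bimodule by threading it down a ladder of intermediate categories, the bottom rung of which is extension of scalars.

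First I would check that the hypotheses on $(X',\wt\Lambda')$ descend to $(X,\wt\Lambda)$: since $i^*IC'_{\l'}=IC_{\l'}[c]$ by \ref{PropICclosed}, the conditions $IC^\O$-parity and BGS are conditions on stalks and costalks along strata and therefore pass from $X'$ to $X$, and separatedness of $wt$ descends along the same lines. Theorem \ref{thmMainModKoszul} then equips both $A':=End(P')$ and $A:=End(P)$ with $\E$-Koszul gradings, where by \ref{ObsResProj} I take $P=i^*P'$ together with matching Frobenius lifts, so that there is a graded Frobenius-compatible algebra map $A'\rar A$. Exactly as in the Corollary following \ref{thmMainModKoszul}, for $X'$ this yields, with $L':=Hom(P',IC'_\Lambda)$ and $L'^\bullet$ a finite resolution by graded projectives,
$$D^b_{\wt\Lambda'}(X',\E)\xleftarrow{real}D^b(\P_{\wt\Lambda'}(X',\E))\xrightarrow{Hom(P',\_)}per\-A'\supset per\-End^\bullet(L'^\bullet)\xrightarrow{\cong}per\-Ext^\bullet(IC'_\Lambda),$$
in which $real$ is an equivalence by \cite[2.3.4]{RSW}, $Hom(P',\_)$ is an equivalence since $\P_{\wt\Lambda'}$ has finite cohomological dimension, the inclusion identifies $\langle IC'_\Lambda\rangle$ with $per\-End^\bullet(L'^\bullet)$ by \ref{PropGenerator}, and the last arrow is Deligne's purity argument \ref{PropDeligneFormality} applied to the pure $dgg$-algebra $End^\bullet(L'^\bullet)$. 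The same construction on $X$, with $L:=Hom(P,IC_\Lambda)$, produces both lower vertical equivalences.

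The substance is the commutativity of the outer rectangle, which I would obtain by interposing, on both sides, the categories $D^b(\P_{\wt\Lambda}(\cdot,\E))$, $per\-A$, $per\-End^\bullet(L^\bullet)$, $per\-S(End^\bullet(L^\bullet))$, $per\-Ext^\bullet(L)$, and checking that each rung commutes with the appropriate horizontal functor. The rungs are: realization versus $i^*$, which commutes by \cite[A.7.1]{DbPerv}; $Li^*$ on perverse sheaves versus $\_\overset{L}{\underset{A'}{\otimes}}A$ on modules, which is \ref{LemProjectivesAndFunctors} and is exactly where $P=i^*P'$ is needed; the Morita rung, where the horizontal is the bimodule $Hom(L^\bullet,L'^\bullet\underset{A'}{\otimes}A)$ and commutativity reduces to $L\cong L'\overset{L}{\underset{A'}{\otimes}}A$ — equivalently $i^*IC'_\Lambda\cong IC_\Lambda$, again \ref{PropICclosed} — so that $L'^\bullet\underset{A'}{\otimes}A$ and $L^\bullet$ are two projective resolutions of the same object and the evaluation maps are compared along a homotopy equivalence; the two Deligne-truncation rungs $per\-S(End^\bullet)\rar per\-H(End^\bullet)$, handled by \ref{PropDeligneFormality} applied to the graded bimodule $S(\cdot)$; and the final rung $Ext^\bullet(L,L'\underset{A'}{\otimes}A)=Ext^\bullet(L,L)$, once more via $L'\overset{L}{\underset{A'}{\otimes}}A\cong L$.

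To conclude, I would note that every rung except the Morita one is an equivalence of the ambient categories by \ref{PropGenerator}, \ref{PropDeligneFormality} and \cite[2.3.4]{RSW}, while the Morita bimodule $Hom(L^\bullet,L'^\bullet\underset{A'}{\otimes}A)$ need not be an equivalence on all of $per\-End^\bullet$ but, by \ref{PropGenerator}, restricts to an equivalence $\langle IC'_\Lambda\rangle\rar\langle IC_\Lambda\rangle$, which is all the diagram asks for. Composing the ladder vertically then exhibits the bottom horizontal as extension of scalars along the algebra map $Ext^\bullet(IC'_\Lambda)\rar Ext^\bullet(IC_\Lambda)$ induced by $i^*$. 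I expect the $D^b(\P)\rar per\-A$ rung to be the main obstacle: writing $Li^*$ as $\_\overset{L}{\underset{A'}{\otimes}}A$ rests on the compatibility of the \emph{constructions} of the projective generators under restriction (so that $P=i^*P'$ with matching Frobenius lifts), and one must keep the internal Koszul grading coherent through every bimodule so that Deligne's argument applies uniformly on both sides; the remaining rungs are bookkeeping with adjunctions and resolutions.
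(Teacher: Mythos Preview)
Your proposal is correct and follows essentially the same route as the paper: the same ladder of categories, the same references for each rung (\cite[A.7.1]{DbPerv}, \ref{LemProjectivesAndFunctors}, \ref{PropICclosed}, \ref{PropDeligneFormality}, \ref{PropGenerator}), and the same use of $L\cong L'\overset{L}{\underset{A'}{\otimes}}A$ to compare the two projective resolutions. One small correction: the rung that fails to be an equivalence on the ambient categories is the vertical $L^\bullet:per\text{-}End(L^\bullet)\rar per\text{-}A$ (and its primed analogue), not the horizontal Morita bimodule itself; also, the rung you flag as the ``main obstacle'' is in fact the easy one handled directly by \ref{LemProjectivesAndFunctors}, whereas the genuinely delicate step is verifying that the evaluation map in the third square is a quasi-isomorphism.
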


\begin{rem}
 Note, that the choices of projective resolutions $L^\bullet,L'^\bullet$ in the theorem were independent of each other. 
Moreover we already remarked in \ref{ObsResProj}, that for any lift $P_0$, we find $P'_0$ such that $i^*P'_0=P$.
This means that even for an infinite chain of closed embeddings
$$\ldots \linj X_0''\linj  X_0'\linj  X_0$$
we still have a commutative diagram:
$$
\begin{xy}
 \xymatrix{
\ldots 	\ar[r]^{i^*}  &			    D^b_{\Lambda}(X'',\E)  \ar[r]^{i^*}  & D^b_{\Lambda}(X',\E) 	\ar[r]^{i^*}  					 & D^b_{\Lambda}(X,\E)  \\
\ldots 	\ar[r]^{i^*}  &	\langle IC''_{\Lambda} \rangle \ar[r]^{i^*} \ar[u]^{\subset} \ar[d]^{\cong} & \langle IC'_{\Lambda} \rangle \ar[r]^{i^*} \ar[u]^{\subset} \ar[d]^{\cong}	 & \langle IC_{\Lambda} \rangle \ar[d]^{\cong} \ar[u]^{\subset}\\
\ldots 	\ar[r]^{i^*}  &						  per \-Ext^\bullet(IC''_{\Lambda}) \ar[r] & per \-Ext^\bullet(IC'_{\Lambda}) \ar[r]						 & per \-Ext^\bullet(IC_{\Lambda}) \\
}
\end{xy}
$$
Here we assume all $wt(X),wt(X'),wt(X''),\ldots$ to be separated and all varieties to satisfy 
$IC^\O_\l$-parity ($\l \in \wt \Lambda_n$) and the BGS-condition.
\end{rem}

\section{Equivariant formality}

Let $(X_n,X,G)$ be an approximation. 
Suppose that everything is defined over $\F_q$:
I.e. assume that we are given a diagram
$$
\begin{xy}
\xymatrix{
 & & X_0  \\
X_{0,0} \ar[r] \ar[urr] \ar[d] & X_{1,0} \ar[r] \ar[d] \ar[ur] & X_{2,0} \ar[r] \ar[d] \ar[u] &\ldots \\
\ol X_{0,0} \ar[r] & \ol X_{1,0} \ar[r] & \ol X_{2,0} \ar[r] & \ldots \\
}
\end{xy}
$$
decompositions $(\ol X_{n,0},\Lambda,\wt \Lambda)$ a group $G_0$ etc.
\begin{defi}\label{defBGSequiv}
In the above situation, we say that $(X_{n,0},X_0,G_0)$ satisfies the BGS-condition, if
all $(\ol X_n,\wt \Lambda_n)$ satisfy the BGS-condition and in addition
$$wt(X,G):=\bigcup wt(X_{n,0})$$
is a finite set.

We say that $(X_{n},X,G)$ satisfies $IC^\O$-parity, if for all $n$ and all $\l \in \wt \Lambda_n$ 
the sheaf $IC^\O_\l$ is parity.
\end{defi}
We will abuse notation and just say that ``$(X_n,X,G)$ satisfies the BGS-condition'' or ``$(X_n,X,G)$ is BGS''.
The BGS-condition and $IC^\O$-parity are both preserved under the usual constructors:
\begin{thm}\label{thmBGSApproxRules}
\begin{enumerate}
\item\label{itemProd} Let $(X_n,X,G)$ and $(Y_n,Y,H)$ be two approximations,
which satisfy the BGS-condition (resp. $IC^\O$-parity). Then 
$$(X_n\times Y_n,X\times Y,G\times H)$$
satisfies the BGS-condition (resp. $IC^\O$-parity) as well.
In this case we have:
$$wt(X\times Y, G\times H)=wt(X,G) \cdot wt(Y,H)$$
\item\label{itemSplit} Let $N \inj P \surj L$ be a short exact sequence of algebraic groups, for which $P\surj L$ is split. 
Suppose that $N$ is acyclic. If $(E_n,pt,L)$ is BGS (resp. $IC^\O$-parity), then $(P\underset{L}{\otimes} E_n,pt,P)$ BGS  (resp. $IC^\O$-parity) as well.
In this case we have:
$$wt(pt,P)=wt(pt,L)$$
\item\label{itemBalanced} Let $X$ be a variety with a $G$-action and compatible acyclic stratification and $(E_n,pt,G)$ be an approximation. 
Suppose that $X$ and $(E_n,pt,G)$ both satisfy the BGS-condition (resp. $IC^\O$-parity).
Then $(E_n \times X,X,G)$ satisfies the BGS-condition  (resp. $IC^\O$-parity) as well. 
In this case we have:
$$wt(X,G)=wt(X)\cdot wt(pt,G)$$
\end{enumerate}
\begin{proof}
It is straightforward to assemble the proof from the following ingredients:
 \begin{enumerate}
\item \ref{thmApproxConstr}, \ref{LemProdBGSisBGS}
\item \ref{thmApproxConstr}
\item \ref{thmApproxConstr}, \ref{lemBalProdBGSCond}
 \end{enumerate}
\end{proof}
\end{thm}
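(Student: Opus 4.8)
The plan is to deduce all three assertions from the non-equivariant results of the previous sections, the point being that the base spaces $\ol{(\,\cdot\,)}_n$ of the newly built approximations are, in each case, either products or balanced products, or else are simply unchanged. Recall from Definition~\ref{defBGSequiv} that to check that an approximation is BGS (resp.\ $IC^\O$-parity) one has to verify that every $(\ol X_n,\wt\Lambda_n)$ is BGS (resp.\ that every $IC^\O_\l$ with $\l\in\wt\Lambda_n$ is parity) and that $\bigcup_n wt(\ol X_{n,0})$ is finite. In all three parts Theorem~\ref{thmApproxConstr} already produces an approximation, so only these properties are at stake; moreover the finiteness of the union will be automatic once the displayed weight formulas are established, since their right-hand sides are finite by hypothesis.

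For part~(\ref{itemProd}) the approximation $(X_n\times Y_n,X\times Y,G\times H)$ has base $\ol{(X_n\times Y_n)}=\ol X_n\times\ol Y_n$ with the product of the acyclic refinements. Lemma~\ref{LemProdBGSisBGS} then gives the BGS-condition together with $wt(\ol X_n\times\ol Y_n)=wt(\ol X_n)\cdot wt(\ol Y_n)$, while $IC^\O$-parity follows from Theorem~\ref{thmBPPerv}(2) applied to the trivial bundle $E=B\times G$ (concretely: $IC^\O_{(\l,\mu)}=IC^\O_\l\boxtimes IC^\O_\mu$ by Lemma~\ref{lemBoXProd}, and an exterior product of parity sheaves is parity, as one sees on stalks and costalks). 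Passing to the limit in $n$ then yields $wt(X\times Y,G\times H)=wt(X,G)\cdot wt(Y,H)$. I expect this last bookkeeping step to be the only mildly delicate one: to know that the union of the products $wt(\ol X_n)\cdot wt(\ol Y_n)$ equals the product of the unions, one uses that the sequences $n\mapsto wt(\ol X_{n,0})$ are increasing along the tower, which in turn rests on the compatibilities built into the notion of approximation (in particular $P_n=i^*P_{n+1}$, cf.~\ref{ObsResProj}).

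For part~(\ref{itemSplit}) the situation is even easier, because the construction does not alter the base. Using the chosen splitting one has a $P$-equivariant isomorphism $P\underset{L}{\otimes}E_n\cong N\times E_n$ (as in the proof of Theorem~\ref{thmApproxConstr}); quotienting first by the free $N$-action on the first factor and then by the residual $L$-action on $E_n$ identifies $(P\underset{L}{\otimes}E_n)/P$ with $E_n/L=\ol E_n$, carrying its given stratification $\wt\Lambda_n$. Hence the base spaces of $(P\underset{L}{\otimes}E_n,pt,P)$ coincide with those of $(E_n,pt,L)$, so the BGS-condition and $IC^\O$-parity are inherited word for word, and $wt(pt,P)=\bigcup_n wt(\ol E_n)=wt(pt,L)$.

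For part~(\ref{itemBalanced}) the approximation $(E_n\times X,X,G)$ has base $\ol{(E_n\times X)}=(E_n\times X)/G=E_n\underset{G}{\otimes}X$, the balanced product along the principal $G$-bundle $E_n\rar\ol E_n$. Since $\ol E_n$ and $X$ are both BGS (resp.\ since $X$ and $E_n$ are $IC^\O$-parity, the latter by smooth pullback along $E_n\rar\ol E_n$ via Lemma~\ref{lemParityCritPullback}), Theorem~\ref{thmBPPerv}, that is Lemmas~\ref{lemBalProdBGSCond} and~\ref{lemICBalProdParity}, shows that $E_n\underset{G}{\otimes}X$ is BGS resp.\ $IC^\O$-parity and that $wt(E_n\underset{G}{\otimes}X)=wt(\ol E_n)\cdot wt(X)$. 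Taking the union over $n$ is now immediate because $wt(X)$ does not depend on $n$, giving $wt(X,G)=\big(\textstyle\bigcup_n wt(\ol E_n)\big)\cdot wt(X)=wt(pt,G)\cdot wt(X)$.
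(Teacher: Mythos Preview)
Your proposal is correct and follows essentially the same approach as the paper, which merely lists the ingredients \ref{thmApproxConstr}, \ref{LemProdBGSisBGS}, \ref{lemBalProdBGSCond}; you have faithfully unpacked these, and in part~(\ref{itemSplit}) you make explicit the key observation (only implicit in the paper) that the base spaces $\ol{(P\otimes_L E_n)}$ coincide with $\ol E_n$. Your remark that $IC^\O$-parity in part~(\ref{itemProd}) is not covered by \ref{LemProdBGSisBGS} alone and requires \ref{thmBPPerv} (or directly \ref{lemBoXProd}) is a valid refinement of the paper's citation list.
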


The theorem allows us to construct BGS-approximations for any partial flag variety and compute weights in many cases:
\begin{ex}
\begin{itemize}
\item Let $G=\mathbb G_m$, then $\mathbb P^N$ gives a BGS and $IC^\O$-parity approximation of $(pt,G)$. Hence we get
$$wt(pt,\mathbb G_m)=\{1,\q\}$$
By taking products we also get
$wt(pt,T)=\{1,\q,\ldots \q^k\}$
for $T=\mathbb G_m^k$ a torus.
\item Let $G=GL_k$. Then $Gr(k,N)$ gives a BGS and $IC^\O$-parity approximation of $(pt,G)$. By \cite{janGrass}
we get
$$wt(pt,GL_k)=\{1,\q,\ldots ,\q^k \}$$
By taking products and forming a split extension, we get 
$$wt(pt,P)=\{1,\q,\ldots, \q^n\}$$
where $B \subseteq P\subseteq GL_n$ is a parabolic.
\item Let $T \subset G$ be a maximal torus inside a connected solvable group.
We know that a BGS and $IC^\O$-parity approximation of $(pt,T)$ exists with $wt(pt,T)=\{1,\q,\ldots \q^{rk(T)}\}$.
By forming a split extension, we also get a $IC^\O$-parity BGS approximation $(pt,G)$. It satisfies 
$$wt(pt,G)=\{1,\q,\ldots, \q^{rk(T)}\}$$
For example we get approximations for Borel subgroups this way.
\item Let $B\subset P \subset G$ be a Borel inside a parabolic inside a connected reductive group.
Let $X:=G/P$. Applying the third point of \ref{thmBGSApproxRules} we find a BGS-approximation of $(X,B)$. 
If $P=B$, we have 
$$wt(G/B,B)=\{1,\q, \ldots, \q^{\dim G/B+rk(T)} \}$$
by \cite{janGrass} 
.
This approximation is $IC^\O$-parity, if and only if $X$ is.
\item Let $B\subset GL_n$ be the Borel of upper triangular matrices and $X:=Gr(k,n)$
be the Grassmannian equipped with the usual $B$-action. Again \ref{thmBGSApproxRules} gives us a BGS and $IC^\O$-parity approximation of $(X,B)$.
Furthermore we have 
$$wt(X,B)=\{1,\q,\ldots, \q^{n+min(k,n-k)}\}$$
\end{itemize}
\end{ex}

\begin{thm}\label{MainThmEquivFormality}
 Let $(X,\Lambda)$ be an acyclically stratified variety with compatible $G$-action.
Suppose that there exists an approximation $(X_n,X,G)$ which is BGS and $IC^\O$-parity.
Suppose that $wt(X,G)$ is separated. 
Then there exists an equivalence of categories:
$$D_{G,\Lambda}^b(X,\E)\cong per\-Ext^\bullet(IC) $$
\begin{proof}
 We have
\begin{align*}
 D_{G,\Lambda}^b(X,\E)& \cong \varprojlim D_{G,\Lambda}^b(\ol X_n,\E) && \ref{thmOlafApprox}\\
		      & \cong \varprojlim per\-Ext^\bullet(IC) && \ref{thmFormalityInclusion}\\
		      & \cong  per\- (\varprojlim Ext^\bullet(IC)) && \ref{PropPerStetig}\\
		      & \cong per\-Ext^\bullet(IC) && \ref{thmOlafApprox}
\end{align*}
\end{proof}
\end{thm}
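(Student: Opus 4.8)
The plan is to bootstrap the non-equivariant formality of Section~1 up the approximation tower of Section~4, passing to the limit via \ref{thmOlafApprox}. First I would invoke \ref{thmOlafApprox} to replace the equivariant category by an inverse limit,
$$D^b_{G,\Lambda}(X,\E)\cong \varprojlim\nolimits_n D^b_\Lambda(\ol X_n,\E),$$
with transition functors the pullbacks along the closed inclusions $\ol X_n\inj \ol X_{n+1}$ supplied by the approximation $(X_n,X,G)$.

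For each fixed $n$ the hypotheses say that $(\ol X_n,\wt\Lambda_n)$ is BGS and $IC^\O$-parity and that $wt(\ol X_n)\subseteq wt(X,G)$ is separated, so the corollary following \ref{thmMainModKoszul} applies and yields $D^b_\Lambda(\ol X_n,\E)\cong per\-Ext^\bullet(IC_\Lambda^{(n)})$. The point is to make these equivalences compatible with the transition functors: this is precisely \ref{thmFormalityInclusion}, whose hypotheses hold because the inclusions $\ol X_n\inj \ol X_{n+1}$ restrict to normally smooth inclusions between strata closures, so $i^*IC_\Lambda^{(n+1)}=IC_\Lambda^{(n)}$ by \ref{PropICclosed}. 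The remark following \ref{thmFormalityInclusion}, together with \ref{ObsResProj} (which lets one extend a Frobenius lift of a projective generator to a compatible lift on the bigger variety), shows the equivalences can be chosen simultaneously for the whole tower, with transition maps on the right-hand side the extension-of-scalars functors along graded algebra maps $Ext^\bullet(IC_\Lambda^{(n+1)})\rar Ext^\bullet(IC_\Lambda^{(n)})$. Hence $\varprojlim_n D^b_\Lambda(\ol X_n,\E)\cong \varprojlim_n per\-Ext^\bullet(IC_\Lambda^{(n)})$.

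Next I would commute $per$ past the limit using \ref{PropPerStetig}, obtaining $per\-(\varprojlim_n Ext^\bullet(IC_\Lambda^{(n)}))$. One must verify its hypotheses: the $Ext$-algebras are non-negatively graded with degree-zero part a finite product of copies of $\E$ (immediate from Koszulity and the finite strata index set $\wt\Lambda_n$), and the transition maps are isomorphisms below degree $n$ — which follows because $X_n\rar X$ is $n$-acyclic, so the comparison of the truncated categories $D^I$ is an equivalence for $|I|\le n$ and low-degree $Ext$-groups stabilize. Finally the stabilized limit $\varprojlim_n Ext^\bullet(IC_\Lambda^{(n)})$ is, degree by degree, the graded endomorphism ring $Ext^\bullet_{D^b_{G,\Lambda}(X,\E)}(IC,IC)$ of the equivariant $IC$ — again via \ref{thmOlafApprox} applied to the $D^I$-truncations — which identifies $per\-(\varprojlim_n Ext^\bullet(IC_\Lambda^{(n)}))$ with $per\-Ext^\bullet(IC)$ and completes the proof.

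I expect the main obstacle to be the second step: arranging the $per$-level formality equivalences to be genuinely compatible with the entire tower of transition functors rather than with a single inclusion, i.e.\ choosing projective generators, their Frobenius lifts, and graded projective resolutions coherently along the tower. This is the technical heart of \ref{thmFormalityInclusion} and its following remark; once it is in place, the remainder is formal bookkeeping with inverse limits of categories (\ref{LemIdemLimit}, \ref{PropPerStetig}).
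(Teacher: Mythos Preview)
Your proposal is correct and follows exactly the four-step strategy of the paper's proof: \ref{thmOlafApprox}, then \ref{thmFormalityInclusion} (with its subsequent remark for the whole tower), then \ref{PropPerStetig}, then \ref{thmOlafApprox} again to identify the limiting $Ext$-algebra. One small slip: the degree-zero part of $Ext^\bullet(IC_\Lambda^{(n)})$ is indexed by $\Lambda$, not $\wt\Lambda_n$, since $IC_\Lambda=\bigoplus_{\l\in\Lambda}IC_\l$; this is actually what makes the index set constant along the tower, as required implicitly by \ref{PropPerStetig}.
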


\begin{cor}
Let $(X,\Lambda)$ be an acyclically stratified variety with compatible $G$-action
such that the $IC^\O$-sheaves are parity. Let $(E_n,pt,G)$ be an approximation. 
Suppose that $X$ and $(E_n,pt,G)$ both satisfy the BGS-condition. Suppose that 
$$wt(X,G)=wt(X)\cdot wt(pt,G)$$
is separated. 
Then there exists an equivalence of categories:
$$D_{G,\Lambda}^b(X,\E)\cong per\-Ext^\bullet(IC_\Lambda,IC_\Lambda) $$
\begin{proof}
This follows from \ref{MainThmEquivFormality} and \ref{thmBGSApproxRules}.
\end{proof}
\end{cor}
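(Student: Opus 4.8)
The plan is to deduce this directly from \ref{MainThmEquivFormality} by producing an approximation of $(X,G)$ that meets the hypotheses of that theorem. The approximation to use is the balanced-product one built from the given $BG$-approximation $(E_n,pt,G)$ and the $G$-variety $X$: by \ref{thmApproxConstr}(3), since $X$ carries a compatible acyclic stratification, the diagram $(E_n\times X,X,G)$, with $G$ acting diagonally on $E_n\times X$, is again an approximation.

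Next I would verify that $(E_n\times X,X,G)$ is BGS and $IC^\O$-parity, and record its weight set. This is exactly the content of \ref{thmBGSApproxRules}(3). The hypotheses required there are that $X$ and $(E_n,pt,G)$ both satisfy the BGS-condition — which is assumed — and that both are $IC^\O$-parity: the $IC^\O$-sheaves on $X$ are parity by assumption, and the $BG$-approximations $(E_n,pt,G)$ one builds in the preceding examples (products of projective spaces, Grassmannians with their Schubert stratifications, and split extensions thereof) are $IC^\O$-parity, so \ref{thmBGSApproxRules}(3) applies. It then tells us that $(E_n\times X,X,G)$ is BGS and $IC^\O$-parity and, moreover, that
$$wt(X,G)=wt(X)\cdot wt(pt,G).$$

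Finally I would invoke \ref{MainThmEquivFormality}. At this point we have an approximation $(E_n\times X,X,G)$ of $(X,\Lambda)$ which is BGS and $IC^\O$-parity, and by hypothesis the set $wt(X,G)=wt(X)\cdot wt(pt,G)$ is separated; these are precisely the inputs of \ref{MainThmEquivFormality}, which therefore yields the equivalence $D^b_{G,\Lambda}(X,\E)\cong per\-Ext^\bullet(IC_\Lambda,IC_\Lambda)$.

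I do not expect a genuine obstacle: the statement is a repackaging of \ref{thmApproxConstr}(3), \ref{thmBGSApproxRules}(3) and \ref{MainThmEquivFormality}, so the work is entirely in matching hypotheses. The two points worth a moment's attention are the $IC^\O$-parity bookkeeping for the $BG$-factor $(E_n,pt,G)$ of the approximation, and making sure that it is the \emph{product} set $wt(X)\cdot wt(pt,G)$ whose separatedness is being assumed — rather than separatedness of the two factors individually — which is exactly the set that \ref{thmBGSApproxRules}(3) identifies with $wt(X,G)$ and the one that \ref{MainThmEquivFormality} consumes.
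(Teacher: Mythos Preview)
Your proposal is correct and matches the paper's approach exactly: construct the approximation $(E_n\times X,X,G)$ via \ref{thmApproxConstr}(3), verify BGS and $IC^\O$-parity via \ref{thmBGSApproxRules}(3), and feed the result into \ref{MainThmEquivFormality}. Your observation that $IC^\O$-parity of $(E_n,pt,G)$ is needed for \ref{thmBGSApproxRules}(3) but is not explicitly listed among the corollary's hypotheses is a fair catch; the paper is tacitly assuming the $BG$-approximation is one of the standard ones from the examples, for which this holds.
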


\begin{cor}
Let $B\subset P \subset G$ be a Borel inside a parabolic inside a connected, reductive group over $\ol \F_q$. Then for $l>>0$ there exists an 
equivalence of categories:
$$D_{B}^b(G/P,\F)\cong per\-Ext^\bullet(IC)$$
\end{cor}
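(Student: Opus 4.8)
The plan is to apply the preceding corollary (equivalently, \ref{MainThmEquivFormality} together with \ref{thmBGSApproxRules}) in the case where the acting group is the Borel $B$ and $X:=G/P$, equipped with its Bruhat stratification $G/P=\bigsqcup_{w}BwP/P$. After replacing $q$ by a suitable power we may assume that $G$, $P$, $B$ and the Bruhat stratification are all defined over $\F_q$; this stratification is a cell stratification and the left $B$-action is compatible with it, so the general framework applies. Recall that partial flag varieties satisfy the BGS-condition and that over $\Q_l$ every $IC$-sheaf on $G/P$ is even of Tate type, so $wt(G/P)$ is a well-defined finite set contained in $\{1,\q,\ldots,\q^{N_0}\}$ for an explicit $N_0$ (in the full-flag case the examples give $N_0=\dim G/B$).

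Next I would assemble an approximation of $(G/P,B)$ that is BGS and $IC^\O$-parity. Choosing a maximal torus $T\subset B$ and writing $B=T\ltimes B_u$ with $B_u$ unipotent, hence an iterated extension of additive groups and so acyclic, the products of projective spaces approximating $(pt,T)$ are BGS and $IC^\O$-parity; by \ref{thmBGSApproxRules}(\ref{itemSplit}) this yields a BGS and $IC^\O$-parity approximation $(E_n,pt,B)$ with $wt(pt,B)=\{1,\q,\ldots,\q^{rk(T)}\}$. For $l$ large, $G/P$ itself is $IC^\O$-parity: outside the finite set of primes dividing the torsion of the (finitely many) stalks and costalks of the sheaves $IC_w^{\Z_l}$, these stalks and costalks are torsion free, so $IC_w^{\F_l}=\F_l\overset{L}{\underset{\Z_l}{\otimes}}IC_w^{\Z_l}$ has torsion-free stalks, and by the BGS-condition together with the $\Q_l$-parity these lie in even degrees; hence $IC^\O_w$ is parity. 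By \ref{thmBGSApproxRules}(\ref{itemBalanced}) the approximation $(E_n\times G/P,\,G/P,\,B)$ is then BGS and $IC^\O$-parity, and $wt(G/P,B)=wt(G/P)\cdot wt(pt,B)\subseteq\{1,\q,\ldots,\q^{N}\}$ with $N:=N_0+rk(T)$. Imposing in addition that $l$ be so large that $1,q,q^2,\ldots,q^{N}$ are pairwise distinct in $\F_l$ — i.e. $l\nmid\prod_{0\le i<j\le N}(q^{j}-q^{i})$, which excludes only finitely many $l$ — makes $wt(G/P,B)$ separated. All hypotheses of the preceding corollary are now met, and since $\E=\F$ is admissible it produces the desired equivalence $D^b_B(G/P,\F)\cong per\-Ext^\bullet(IC)$.

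The main obstacle I expect lies in the $IC^\O$-parity step for $G/P$: one needs both that partial flag varieties satisfy the BGS-condition — so that the stalk cohomology of the $\Q_l$-$IC$-sheaves sits in the correct degrees and is of Tate type — and that, for all but finitely many $l$, the integral $IC$-sheaves $IC_w^{\Z_l}$ on $G/P$ have torsion-free stalks and costalks, so that the $\F_l$-coefficient sheaves are their reductions and inherit parity. Everything else — the reduction to $\F_q$, the verification that the Bruhat stratification is a cell stratification with compatible $B$-action, the weight bookkeeping, and the finitely-many-bad-primes argument behind separatedness — is routine given the structural results already established in the paper.
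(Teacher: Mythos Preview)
Your argument is correct and is exactly what the paper intends: the corollary carries no proof in the text and is meant to be read off from the preceding corollary together with the worked examples, which already record that $(G/P,B)$ admits a BGS approximation via the solvable-group construction, that this approximation is $IC^\O$-parity if and only if $G/P$ is, and that $wt(G/P,B)$ is a finite set of powers of $\q$.

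The only point that needs tidying is your formulation of the large-$l$ parity step. Writing ``primes dividing the torsion of the stalks of $IC_w^{\Z_l}$'' is circular: $IC_w^{\Z_l}$ is a $\Z_l$-object, so any torsion in its stalks is $l$-power torsion and there is no other prime to speak of. What you want to say --- and what the paper is tacitly invoking --- is that there is a single finite set $S$ of primes, independent of $l$, such that for every $l\notin S$ all $IC_w^{\Z_l}$ have torsion-free stalks and costalks. This follows for instance by the comparison with $\C$ sketched in the paper's final section: over $\C$ the integral intersection cohomology stalks are finitely generated abelian groups, and $S$ is the set of primes dividing their torsion. With that reformulation your argument is complete and matches the paper's implicit one.
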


\begin{cor}\label{corGrass}
 Suppose that $\{1,\q,\ldots ,\q^{n+min(n-k,k)} \}$ is separated. Then there is an equivalence of categories:
$$D^b_B(Gr(k,n),\E)\cong per\-Ext^\bullet(IC,IC)$$
\end{cor}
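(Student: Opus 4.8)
The plan is to assemble the statement from \ref{MainThmEquivFormality} and \ref{thmBGSApproxRules}, exactly as in the preceding corollary; the only additional work is to pin down the relevant set of weights and to check the parity hypothesis for the Grassmannian. First I would fix the standard $\F_q$-structure on $Gr(k,n)$ obtained by descent from $\C$ (cf.\ the discussion in the Outline), realise $Gr(k,n)=GL_n/P$ for the appropriate maximal parabolic $P$, and equip it with the Bruhat stratification $Gr(k,n)=\bigsqcup BwP/P$. This is a $B$-stable cell stratification, and partial flag varieties satisfy the BGS-condition (used repeatedly in the paper, e.g.\ the footnote to \ref{PropPBGSkoszul2}), so $(Gr(k,n),\text{Bruhat})$ is a cell stratified BGS variety with compatible $B$-action.

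Next I would produce a BGS and $IC^\O$-parity approximation of $(Gr(k,n),B)$. Since $B\subset GL_n$ is connected solvable, it is a split extension $B_u\inj B\surj T$ with $B_u$ an iterated extension of additive groups, hence acyclic, and $T\cong\mathbb G_m^{\,n}$; by \ref{exApproxGln} and \ref{thmApproxConstr} there is an approximation $(E_n,pt,B)$, and by parts \ref{itemProd} and \ref{itemSplit} of \ref{thmBGSApproxRules} it is BGS and $IC^\O$-parity with $wt(pt,B)=wt(pt,T)=\{1,\q,\ldots,\q^{n}\}$. Now $Gr(k,n)$ is cominuscule, so its Schubert varieties have torsion-free stalks and costalks of their $\Z_l$-intersection cohomology complexes, vanishing in odd degree; hence $Gr(k,n)$ is $IC^\O$-parity, and by \cite{janGrass} it is BGS with $wt(Gr(k,n))=\{1,\q,\ldots,\q^{min(k,n-k)}\}$. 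Applying part \ref{itemBalanced} of \ref{thmBGSApproxRules} to $(E_n\times Gr(k,n),Gr(k,n),B)$ then yields a BGS and $IC^\O$-parity approximation of $(Gr(k,n),B)$ with
$$wt(Gr(k,n),B)=wt(Gr(k,n))\cdot wt(pt,B)=\{1,\q,\ldots,\q^{\,n+min(k,n-k)}\}.$$

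Finally, the hypothesis of the corollary says precisely that this last set is separated, so \ref{MainThmEquivFormality} applies verbatim and gives the desired equivalence $D^b_B(Gr(k,n),\E)\cong per\-Ext^\bullet(IC)$. The only genuinely non-formal ingredients are the computation $wt(Gr(k,n))=\{1,\q,\ldots,\q^{min(k,n-k)}\}$ and the multiplicativity $wt(X,G)=wt(X)\cdot wt(pt,G)$ from \ref{corBPPerv}; everything else is bookkeeping with the constructors of \ref{thmBGSApproxRules}. I expect the main obstacle to be making the Grassmannian weight bound genuinely explicit, i.e.\ controlling the Frobenius eigenvalues on the endomorphism ring of a projective generator of $\P_{(B)}(Gr(k,n),\Q_l)$ and showing the top exponent is exactly $min(k,n-k)$ (this is the content imported from \cite{janGrass}); the parity input, while standard, also relies on the torsion-freeness of the integral $IC$-stalks, which fails for general Schubert varieties but holds in the cominuscule case of Grassmannians.
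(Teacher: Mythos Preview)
Your proposal is correct and follows exactly the route the paper takes: the corollary is an immediate application of \ref{MainThmEquivFormality} (via the preceding corollary) to the example already worked out just before, where the approximation $(E_n\times Gr(k,n),Gr(k,n),B)$ is shown to be BGS and $IC^\O$-parity with $wt(Gr(k,n),B)=\{1,\q,\ldots,\q^{n+\min(k,n-k)}\}$. The paper leaves the corollary without an explicit proof precisely because all ingredients---$wt(pt,B)=\{1,\q,\ldots,\q^n\}$ for the Borel in $GL_n$, $wt(Gr(k,n))=\{1,\q,\ldots,\q^{\min(k,n-k)}\}$ from \cite{janGrass}, and $IC^\O$-parity of the Grassmannian---have already been recorded in the examples and the final table.
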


\subsection{Passage from $X_{\ol \F_q}$ to $X_\C$}
So far, we established equivariant formality for varieties over $\ol \F_q$. We will now briefly explain how to extend these 
results to varieties over $\C$. 
\begin{thm}
 Let $X_{\C}=Gr(n,k)_{\C}$ be the Grassmannian of $k$-planes inside $\C^n$,
equipped with the usual action by the set of upper triangular invertible matrices $B_\C$. If 
$$l>n+min(k,n-k)+1$$
 then there exists an equivalence of categories
$$D^b_{B_{\C}}(X_\C,\E) \cong per \-Ext^\bullet(IC)$$
\begin{proof}
First of all we may replace the analytic topology by the étale topology on complex varieties \cite{BBD}[6.1.2].

Let $V \subset \C$ be a strictly Henselian local ring with residue field $\ol\F_p$. 
Objects over $V$ are denoted by $X_V$ and their base changes to $\C,\ol \F_q$ are denoted by $X_\C,X$.

Then $B,Gr(n,k),E(n,k)$ are defined over $V$ in a way that satisfies 
the conditions in \cite[6.1.8]{BBD} (Indeed everything is even defined over $\Z$). Furthermore all operations \ref{thmApproxConstr} used to construct
the relevant approximation make sense over $V$. Note in particular that the strata of $(X_{n,V}, \wt \Lambda_n)$ 
are still acyclic over $V$, since they are of $\mathbb A_n$ bundles over $\mathbb A_{n,V}$.
As in \cite[7.1.4]{RSW} we now obtain vertical pullback equivalences, which fit into a commutative diagram
and preserve $IC_{\Lambda}$-sheaves:
$$
\begin{xy}
 \xymatrix{
\ldots \ar[r]&	D^b_{\wt \Lambda}(X_{3,\C},\E) 	\ar[r]^{i^*}  &	D^b_{\wt \Lambda}(X_{2,\C},\E) 	\ar[r]^{i^*} &	D^b_{\wt \Lambda}(X_{1,\C},\E)\\				
\ldots \ar[r]&	D^b_{\wt \Lambda}(X_{3,V},\E) \ar[u] \ar[d] \ar[r]^{i^*} & D^b_{\wt \Lambda}(X_{2,V},\E) \ar[u] \ar[d] \ar[r]^{i^*} & D^b_{\wt \Lambda}(X_{1,V},\E) \ar[u] \ar[d]\\
\ldots \ar[r]&	D^b_{\wt \Lambda}(X_3,\E) 	\ar[r]^{i^*} &  D^b_{\wt \Lambda}(X_2,\E) 	\ar[r]^{i^*} & D^b_{\wt \Lambda}(X_1,\E) 
}
\end{xy}
$$
Assuming that $l>wr(X,B)=n+min(k,n-k)+1$, we may invoke Dirichlets theorem to choose our prime $p$ such that $wt(X,B)$ is separated with respect to $l$.
Since all $IC^\O$ are parity for the Grassmannian, we get the desired result:
\begin{align*}
D_{B_{\C}}^b(X_\C,\E) & \cong D_B^b(X,\E) \\
 &\cong per\-Ext_{D_B^b(X,\E)}(IC,IC) \\
&\cong per\-Ext_{D_{B_{\C}}^b(X_\C,\E)}(IC,IC)
\end{align*}
\end{proof}
\end{thm}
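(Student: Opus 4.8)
The plan is to transport the equivalence of \ref{corGrass}, which lives over $\ol \F_q$, to the complex Grassmannian by the standard comparison between the $l$-adic étale formalism over $\ol \F_q$ and the constructible formalism over $\C$. First I would pass from the analytic to the étale topology on $X_\C$, which is legitimate by \cite[6.1.2]{BBD}, so that both sides are instances of the same six-functor formalism. Then I would pick a strictly Henselian local ring $V \subset \C$ with residue field $\ol \F_p$ and record that $B$, $Gr(n,k)$, the Stiefel varieties $E(n,k)$, and therefore the entire approximation datum built from them through the constructors of \ref{thmApproxConstr} and \ref{thmBGSApproxRules}, are already defined over $\Z$ and in particular over $V$. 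The only point that needs a glance is that the refined strata of $(X_{n,V},\wt \Lambda_n)$ stay acyclic over $V$, which holds because they are iterated affine-space bundles over an affine space.

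The heart of the argument is to produce, at each approximation level $n$, base-change (pullback) functors
$$D^b_{\wt \Lambda}(X_{n,\C},\E) \lar D^b_{\wt \Lambda}(X_{n,V},\E) \rar D^b_{\wt \Lambda}(X_n,\E)$$
whose horizontal arrows are triangulated equivalences, which are compatible with the closed-inclusion transition functors $i^*$ of the tower, and which preserve the sheaves $IC_\Lambda$. This is precisely the specialization comparison carried out in \cite[7.1.4]{RSW}, and it applies here because the genericity hypotheses of \cite[6.1.8]{BBD} are satisfied (everything being defined over $\Z$). Passing to the inverse limit over $n$ (as in \ref{thmOlafApprox}) then identifies $D^b_{B_\C}(X_\C,\E)$ with $D^b_B(X,\E)$ and, by the compatibility with $IC_\Lambda$, matches the $Ext$-algebras of the intersection cohomology complexes on the two sides.

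The remaining input is arithmetic. We are given $l > n + min(k,n-k) + 1 = wr(X,B)$, so $wt(X,B) = \{1,\q,\ldots,\q^{wr(X,B)-1}\}$ has strictly fewer than $l$ elements; by Dirichlet's theorem on primes in arithmetic progressions one may choose the prime $p$ (hence the power $q$) so that $q$ has multiplicative order modulo $l$ exceeding $wr(X,B)-1$, which makes $\q \mapsto q$ injective on $wt(X,B)$, i.e.\ $wt(X,B)$ separated. Since the $IC^\O$-sheaves on a Grassmannian are parity, \ref{corGrass} now yields $D^b_B(X,\E) \cong per\-Ext^\bullet(IC)$ over $\ol \F_q$, and composing with the base-change equivalences of the previous paragraph gives the asserted equivalence over $\C$.

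The step I expect to be the main obstacle is the middle one: one must check that the whole approximation package --- the tower $X_n$, the principal $G$-bundle structures $X_n \to \ol X_n$, the stratifications $(\ol X_{n,0},\Lambda,\wt \Lambda_n)$, and the normally-smooth-inclusion conditions between strata closures --- descends to $V$ simultaneously, and that the resulting base-change functors are genuine equivalences which intertwine the transition maps and fix $IC_\Lambda$ level by level, so that \cite[7.1.4]{RSW} truly applies in a way compatible with the limit. The parity and BGS inputs, as well as the number-theoretic choice of $p$, are comparatively routine once this is in place.
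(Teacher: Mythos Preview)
Your proposal is correct and follows essentially the same route as the paper's own proof: pass to the \'etale topology via \cite[6.1.2]{BBD}, choose a strictly Henselian $V\subset\C$ with residue field $\ol\F_p$, descend the entire approximation (built from $B$, $Gr(n,k)$, $E(n,k)$ via \ref{thmApproxConstr}) to $V$, invoke the specialization equivalences of \cite[7.1.4]{RSW} level by level, use Dirichlet to pick $p$ so that $wt(X,B)$ is separated, and then apply \ref{corGrass}. Your elaboration of the Dirichlet step and your explicit mention of the inverse-limit passage via \ref{thmOlafApprox} are slightly more detailed than the paper's write-up, but the argument is the same.
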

Exploiting that our groups and partial flag varieties are defined over the integers \cite[II.1.1.9]{JantzenAlgGrp}, 
the same proof gives us the following:
\begin{thm}
Let $G_\C$ be a connected reductive complex algebraic group and $X_\C=G_\C/P_\C$ be a partial flag variety. 
Assume that all $IC^\O$-sheaves are parity and that $l>wr(X,B)$. Then there is an equivalence of categories:
$$D^b_{B_\C}(X_\C,\E)\cong per \-Ext^\bullet(IC)$$
\end{thm}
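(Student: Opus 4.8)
The plan is to run the proof of the preceding theorem for the Grassmannian essentially word for word, replacing the explicit bound $n+\min(k,n-k)+1$ by the abstract quantity $wr(X,B)$ and exploiting that $G$, $P$, $B$ and $G/P$, together with all the approximations we need, are defined over $\Z$.

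First I would pass from the analytic to the \'etale topology on complex varieties via \cite[6.1.2]{BBD}, so that the entire $l$-adic formalism developed above applies. Next, fix a strictly Henselian local ring $V\subseteq\C$ with residue field $\ol\F_p$ and write $X_V$ for objects over $V$ and $X$ for their base change to $\ol\F_q$. By \cite[II.1.1.9]{JantzenAlgGrp} the group $G$ and the parabolic $P$ — hence $X=G/P$ and the Borel $B$ — are defined over $\Z$, and the approximation $(X_n,X,B)$ produced by \ref{thmApproxConstr} (a combination of Stiefel-type approximations of the classifying space of a product of general linear groups, split extensions by acyclic unipotent groups, and balanced products) is assembled from schemes defined over $\Z$. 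One checks, just as in \cite[6.1.8]{BBD} and \cite[7.1.4]{RSW}, that all these constructions make sense over $V$, that the refined strata of $(\ol X_{n,V},\wt\Lambda_n)$ stay acyclic over $V$ (they are iterated affine-space bundles), and hence that base change along $\C\lar V\rar\ol\F_p$ yields a commutative ladder of pullback equivalences
$$D^b_{\wt\Lambda}(X_{n,\C},\E)\;\cong\;D^b_{\wt\Lambda}(X_{n,V},\E)\;\cong\;D^b_{\wt\Lambda}(X_n,\E)$$
compatible with the transition functors $i^*$ and preserving the $IC_\Lambda$-sheaves.

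Then I would work over $\ol\F_q$. The partial flag variety satisfies the BGS-condition, $IC^\O$-parity holds by hypothesis, and $B$ is connected solvable, so \ref{thmBGSApproxRules} shows that the approximation $(E_n\times X,X,B)$ obtained from a solvable-group approximation $(E_n,pt,B)$ is BGS and $IC^\O$-parity, with $wt(X,B)=wt(X)\cdot wt(pt,B)$ a finite set whose greatest exponent plus one is $wr(X,B)$. Since $l>wr(X,B)$, Dirichlet's theorem on primes in arithmetic progressions lets me choose the residue characteristic $p$ (hence $q$) so that $\q\mapsto q$ is injective on $wt(X,B)$, i.e. $wt(X,B)$ is separated. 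Now \ref{MainThmEquivFormality} gives $D^b_B(X,\E)\cong per\-Ext^\bullet(IC)$ over $\ol\F_q$, and transporting this isomorphism through the ladder of equivalences above (together with \ref{thmOlafApprox}) produces $D^b_{B_\C}(X_\C,\E)\cong per\-Ext^\bullet(IC)$, as desired.

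The step I expect to be the real obstacle is the middle one: checking that every constructor entering \ref{thmApproxConstr} — in particular the split extension presenting $B$ as a semidirect product of a maximal torus and its unipotent radical, and the balanced product $E\underset{B}{\otimes}X$ — descends to $V$ with all of its structure intact (smoothness and $n$-acyclicity of $X_n\rar X$, the $\Lambda$- and $\wt\Lambda_n$-stratifications together with their acyclic refinements, and the normally smooth inclusions $\ol{\ol X}_{n,\l}\inj\ol{\ol X}_{n+1,\l}$ between strata closures), and that the comparison equivalences of \cite[7.1.4]{RSW} are genuinely compatible with the whole tower of transition functors at once. Everything downstream is then a direct appeal to \ref{MainThmEquivFormality}, \ref{thmBGSApproxRules}, and the comparison results of \cite[6.1.8, 6.1.9]{BBD}.
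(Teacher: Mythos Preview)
Your proposal is correct and matches the paper's own argument essentially verbatim: the paper simply says that the same proof as for the Grassmannian works, exploiting that $G$, $P$, $B$ and $G/P$ are defined over $\Z$ \cite[II.1.1.9]{JantzenAlgGrp}, and you have accurately unpacked what ``the same proof'' entails, including the passage to the \'etale topology, the choice of a strictly Henselian $V$, the comparison ladder from \cite[7.1.4]{RSW}, and the use of Dirichlet to arrange separation of $wt(X,B)$.
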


\begin{tabular}{l | l | l | l}
Action of $G$ on $X$ & $wt(X)$ & $wt(X,G)$ & $IC^\O$-parity \\
&&&\\
\hline
&&&\\
$B$ action on $G/B$ for & $\{1,\q, \ldots, \q^{\dim G/B} \}$ & $\{1,\q, \ldots, \q^{\dim G/B+rk(T)} \}$ & ?? \\
$G$ connected reductive &&&\\
\hline
&&&\\
$P$ action on $pt$ for & $\{1\}$ & $\{1,\q, \ldots, \q^{n} \}$ & True for any $l$ \\
$P\subset GL_n$ parabolic &&&\\
\hline
&&&\\
$B$ action on $Gr(k,n)$ & $\{1,\q, \ldots, \q^{min(k,n-k)} \}$ & $\{1,\q, \ldots, \q^{min(k,n-k)+n} \}$ & True for any $l$ \\
$B\subset GL_n$ Borel &&&\\
\hline
&&&\\
$B$ action on $G/B$ &&&\\
for $G$ semisimple of type &&&\\
\hline
&&&\\
$A_k$ for $k\leq 6$ & $\{1,\q, \ldots, \q^{\frac{k(k+1)}{2} } \}$ & $\{1,\q, \ldots, \q^{\frac{k(k+3)}{2} } \}$ & True for any $l$ \\
&&&\\
\hline
&&&\\
$A_7$ & $\{1,\q, \ldots, \q^{56} \}$ & $\{1,\q, \ldots, \q^{63} \}$ & True iff $l \neq 2$ \\
&&&\\
\hline
&&&\\
$B_2$ & $\{1,\q, \ldots, \q^{4} \}$ & $\{1,\q, \ldots, \q^{6} \}$ & True iff $l \neq 2$ \\
&&&\\
\hline
&&&\\
$D_4$ & $\{1,\q, \ldots, \q^{12} \}$ & $\{1,\q, \ldots, \q^{16} \}$ & True iff $l \neq 2$ \\
&&&\\
\hline
&&&\\
$G_2$ & $\{1,\q, \ldots, \q^{6} \}$ & $\{1,\q, \ldots, \q^{8} \}$ & True for any $l$ \\
&&&\\
\hline
\end{tabular}

The table summarizes the authors knowledge about BGS-approximations, weights and $IC^\O$-parity on partial flag varieties.
The results about $IC^\O$-parity for $G/B$ are taken from \cite{GeordieModInt}.

\bibliographystyle{alpha}

\bibliography{referenzen}

\newcommand{\etalchar}[1]{$^{#1}$}
\begin{thebibliography}{GSDV72}

\bibitem[BBD82]{BBD}
A.~Beilinson, J.~Bernstein, and P.~Deligne.
\newblock Faisceaux pervers.
\newblock {\em Ast{\'e}risque}, 100, 1982.

\bibitem[Bei87]{DbPerv}
A.~Beilinson.
\newblock On the derived category of perverse sheaves.
\newblock {\em K-theory, arithmetic and geometry}, 1987.

\bibitem[BGS96]{BGS}
Beilinson, Ginzburg, and Soergel.
\newblock Koszul duality patterns in representation theory.
\newblock {\em Journal of the American Mathematical Society}, 9(2), 1996.

\bibitem[BL94]{BernsteinLunts}
Joseph Bernstein and Valery Lunts.
\newblock {\em Equivariant Sheaves and Functors}, volume 1578 of {\em Lecture
  Notes in Mathematics}.
\newblock Springer-Verlag, 1994.

\bibitem[Bor91]{Borelalggrp}
A.~Borel.
\newblock {\em {Linear Algebraic Groups}}, volume {126} of {\em {Graduate Texts
  in Mathematics}}.
\newblock {Springer-Verlag}, {New York}, {Second Enlarged Edition} edition,
  1991.

\bibitem[D{\etalchar{+}}]{StacksProj}
Aise~Johan {De Jong} et~al.
\newblock The stacks project.

\bibitem[Del80]{Weil2}
P.~Deligne.
\newblock {La conjecture de Weil : II}.
\newblock {\em Publications Math{\'e}matiques de l'IH{\'E}S}, 52, 1980.

\bibitem[Eke90]{Ekedahl}
Ekedahl.
\newblock {\em Grothendieck Festschrift}, volume~II of {\em Progress in
  Mathematics}, chapter On The Adic Formalism.
\newblock Birkh{\"a}user, 1990.

\bibitem[G{\etalchar{+}}03]{SGA1}
A.~Grothendieck et~al.
\newblock {\em Rev{\^e}tements {\'e}tales et groupe fondamental (SGA 1)},
  volume~3 of {\em Documents Math{\'e}matiques}.
\newblock Soci{\'e}t{\'e} Math{\'e}matique de France, {\'e}dition
  recompos{\'e}e edition, 2003.

\bibitem[GSDV72]{SGA4}
A.~Grothendieck, B.~Saint-Donat, and J.-L. Verdier.
\newblock {\em Theorie des Topos et Cohomologie Etale des Schemas (SGA4)},
  volume 270 of {\em Lecture Notes in Mathematics}.
\newblock Springer-Verlag, 1972.

\bibitem[Gui05]{SymSpaceFormal}
S.~Guillermou.
\newblock Equivariant derived category of a complete symmetric variety.
\newblock {\em Represent. theory}, 9, 2005.

\bibitem[Jan87]{JantzenAlgGrp}
J.C. Jantzen.
\newblock {\em Representations of Algebraic Groups}, volume 131 of {\em Pure
  and applied mathematics}.
\newblock Academic Press, 1987.

\bibitem[JMW11]{JMWparity}
D.~Juteau, C.~Mautner, and G.~Williamson.
\newblock Parity sheaves.
\newblock 2011.

\bibitem[Kel94]{KellerDerDg}
Bernhard Keller.
\newblock Deriving dg categories.
\newblock {\em Ann. scient. Ec.Norm. Sup}, 27, 1994.

\bibitem[Lun95]{LuntsToric}
Valery Lunts.
\newblock Equivariant sheaves on toric varieties.
\newblock {\em Composito Math.}, (96), 1995.

\bibitem[Lur]{LurieHigherAlg}
Jacob Lurie.
\newblock {\em Higher Algebra}.

\bibitem[Mil]{MilneEtale}
J.S. Milne.
\newblock Lectures on etale cohomology.

\bibitem[Mil80]{MilneBook}
J.S. Milne.
\newblock {\em Etale Cohomology}, volume~33 of {\em PMS}.
\newblock Princton University Press, 1980.

\bibitem[MO1]{MO143159}
{Existence of quotient variety for group implies existence of quotient for
  normal subgroups}.
\newblock Mathoverflow Question number 143159.

\bibitem[{Rob}77]{Hartshorne}
{Robin Hartshorne}.
\newblock {\em {Algebraic Geometry}}, volume~{52} of {\em {Graduate Texts in
  Mathematics}}.
\newblock Springer-Verlag, {New York}, 1977.

\bibitem[RSW]{RSW}
S.~Riche, W.~Soergel, and G.~Williamson.
\newblock Modular koszul duality.
\newblock To appear.

\bibitem[Sch07]{OlafThesis}
Olaf Schn{\"u}rer.
\newblock {\em Equivariant Sheaves on Flag Varieties, DG Modules and
  Formality}.
\newblock PhD thesis, Fakult{\"a}t f{\"u}r Mathematik und Physik
  Universit{\"a}t Freiburg, 2007.

\bibitem[Vis]{vistolyfib}
Angelo Vistoly.
\newblock {Notes on Grothendieck topologies, fibered categories and descent
  theory}.
\newblock arxiv: 0412512v4.

\bibitem[Wei]{janGrass}
Jan Weidner.
\newblock {Grassmannians and Koszul duality}.
\newblock arxiv: 1311.1975.

\bibitem[Wil11]{GeordieModInt}
G.~Williamson.
\newblock Modular intersection cohomology complexes on flag varieties.
\newblock {\em Mathematische Zeitschrift}, 2011.
\newblock 0709.0207v5.

\end{thebibliography}

\end{document}